\pdfoutput=1
\RequirePackage[l2tabu, orthodox]{nag}

\documentclass[reqno,tbtags]{amsart}

\usepackage{lmodern}
\usepackage[T1]{fontenc}
\usepackage[utf8]{inputenc}
\usepackage[english]{babel}
\usepackage{microtype} %Better font spacing

%For date
\usepackage{datetime}
\usepackage[english]{isodate}

\usepackage[colorinlistoftodos]{todonotes}

\usepackage{amsmath,amssymb,amsthm,mathrsfs,latexsym,mathtools,mathdots,xparse,
booktabs,array,enumerate,tikz,hyperref}
\usepackage[export]{adjustbox} % For aligning pics in eqns
\usepackage[centertableaux]{ytableau}

\usetikzlibrary{arrows,positioning}
\usepackage[capitalize,noabbrev]{cleveref}

\usepackage{graphicx}
\DeclareGraphicsExtensions{.pdf}

\newtheorem{theorem}{Theorem}
\newtheorem{proposition}[theorem]{Proposition}
\newtheorem{lemma}[theorem]{Lemma}
\newtheorem{corollary}[theorem]{Corollary}
\newtheorem{conjecture}[theorem]{Conjecture}

\theoremstyle{definition}
\newtheorem{definition}[theorem]{Definition}
\newtheorem{example}[theorem]{Example}
\newtheorem{remark}[theorem]{Remark}

\newtheorem{problem}[theorem]{Problem}

\newtheorem{bijection}{Bijection} %Environment for bijections

% The q is now included by default. Optional first parameter replaces the q with something else.
% Optional fourth parameter adds a power. In \qbinom{a}{b}[c], c is interpreted as the power.
\NewDocumentCommand\qbinom{O{q} m m o}{
\ensuremath{\IfNoValueTF{#4}{
{\genfrac{[}{]}{0pt}{}{#2}{#3}_{#1}}
}{
{\genfrac{[}{]}{0pt}{}{#2}{#3}_{#1}^{#4}}
}}}

% Notation for q-basic Hypergeometric series.
\newmuskip\pFqmuskip
\newcommand*\pFq[6][8]{%
  \begingroup % only local assignments
  \pFqmuskip=#1mu\relax
  % make the comma math active
  \mathcode`\,=\string"8000
  % and define it to be \pFqcomma
  \begingroup\lccode`\~=`\,
  \lowercase{\endgroup\let~}\pFqcomma
  % typeset the formula
  {}_{#2}\phi_{#3}{\left[\genfrac..{0pt}{}{#4}{#5};#6\right]}%
  \endgroup
}
\newcommand{\pFqcomma}{\mskip\pFqmuskip}

\newcommand{\setN}{\mathbb{N}}
\newcommand{\setZ}{\mathbb{Z}}

\newcommand{\schurS}{\mathrm{s}}

%Catalan numbers
\NewDocumentCommand\Cat{ o m }{%
\ensuremath{\IfNoValueTF{#1}{\mathrm{Cat}({#2})}{\mathbf{Cat}({#2};{#1})}}}

%Narayana numbers
\NewDocumentCommand\Nar{ o m m }{%
\ensuremath{\IfNoValueTF{#1}{\mathrm{Nar}({#2,#3})}{\mathbf{Nar}({#2,#3};{#1})}}}

%Catalan numbers, type B
\NewDocumentCommand\CatB{ o m }{%
\ensuremath{\IfNoValueTF{#1}{\mathrm{Cat}^B({#2})}{\mathbf{Cat}^B({#2};{#1})}}}

% Narayana, type B
\NewDocumentCommand\NarB{ o m m }{%
\ensuremath{\IfNoValueTF{#1}{\mathrm{Nar}^B({#2,#3})}{\mathbf{Nar}^B({#2,#3};{#1})}}}

% SYT of shape (n,n) with k cyclic descents CSP polynomial
\NewDocumentCommand\qSYT{ o m m }{%
\ensuremath{\IfNoValueTF{#1}{\mathrm{Syt}({#2,#3})}{\mathbf{Syt}({#2,#3};{#1})}}}

% NCC on n vertices with k edges and l loops CSP polynomial
\NewDocumentCommand\qNCC{ o m m m }{%
\ensuremath{\IfNoValueTF{#1}{\mathrm{Ncc}({#2,#3,#4})}{\mathbf{Ncc}({#2,#3,#4};{#1})}}}

% BW of length n with k cyclic descents CSP polynomial
\NewDocumentCommand\qBW{ o m m }{%
\ensuremath{\IfNoValueTF{#1}{\mathrm{Bw}({#2,#3})}{\mathbf{Bw}({#2,#3};{#1})}}}

% NCC (type B) on n vertices with k edges and l loops CSP polynomial with marked edge
\NewDocumentCommand\qNCCB{ o m m m }{%
\ensuremath{\IfNoValueTF{#1}{\mathrm{Ncc}^B({#2,#3,#4})}{\mathbf{Ncc}^B({#2,#3,#4};{#1})}}}

%Triangulations
\NewDocumentCommand\Tri{ o m m }{%
\ensuremath{\IfNoValueTF{#1}{\mathrm{Tri}({#2,#3})}{\mathbf{Tri}({#2,#3};{#1})}}}

% Floor function

\newcommand*{\oeis}[1]{\href{http://oeis.org/#1}{#1}}
\newcommand*{\defin}[1]{\textcolor{blue}{\emph{#1}}}
\newcommand*{\mdefin}[1]{\textcolor{blue}{#1}}

% Tableaux and sets
\newcommand{\BW}{\mathrm{BW}}  % Binary words
\newcommand{\SSYT}{\mathrm{SSYT}}
\newcommand{\SYT}{\mathrm{SYT}}

\newcommand{\NCP}{\mathrm{NCP}}
\newcommand{\NCPB}{\mathrm{NCP}^B}
\newcommand{\NCM}{\mathrm{NCM}}

\newcommand{\NCC}{\mathrm{NCC}} %Non-crossing configurations

\newcommand{\DYCK}{\mathrm{DYCK}}
\newcommand{\PATHS}{\mathrm{PATH}} %any 01-path in nxn-square.
\newcommand{\TRI}{\mathrm{TRI}} %Triangulations as SET

% Set-valued statistics

\newcommand{\DES}{\mathrm{Des}}
\newcommand{\CDES}{\mathrm{cDes}}

% For words.
\newcommand{\sfw}{\mathsf{w}}
\newcommand{\sfb}{\mathsf{b}}
\newcommand{\sfc}{\mathsf{c}}

\newcommand{\thsup}{\textnormal{th}}

%Math operators

\DeclareMathOperator{\peaks}{peaks}
\DeclareMathOperator{\valleys}{valleys}

\DeclareMathOperator{\blocks}{blocks}

\DeclareMathOperator{\des}{des}

\DeclareMathOperator{\even}{even}
\DeclareMathOperator{\order}{o}

\DeclareMathOperator{\inv}{inv}
\DeclareMathOperator{\maj}{maj}
\DeclareMathOperator{\pmaj}{pmaj}

\DeclareMathOperator{\cdes}{cdes}
\DeclareMathOperator{\sh}{sh}
\DeclareMathOperator{\sml}{short}
\DeclareMathOperator{\st}{st}
\DeclareMathOperator{\dep}{depth}
\DeclareMathOperator{\ear}{ear}

\DeclareMathOperator{\rot}{rot} %rotation group action
\newcommand{\rotB}{\mathrm{rotB}} %rotation group action
\DeclareMathOperator{\twist}{twist} %twisted rotation group action
\DeclareMathOperator{\shift}{shift} %shift on words
\DeclareMathOperator{\rowmotion}{\rho} %rowmotion on posets
\DeclareMathOperator{\flip}{\gamma} %rowmotion on posets
\DeclareMathOperator{\krew}{krew} %kreweras complement on non-crossing partitions

% Laser bijection
\newcommand{\laser}{\mathtt{DYCKtoNCC}}

% Promotion on SYT
\newcommand{\prom}{\partial}

\newcommand{\kprom}{\hat{\partial}}

%\newcommand{\flip}{\mathrm{flip}} %flip marked to unmarked

% SYT(n,n) and DYCK(n) bijection
\newcommand{\SYTtoDYCK}{\mathtt{SYTtoDYCK}}

% SYT(n,n) and NCM(n) bijection
\newcommand{\SYTtoNCM}{\mathtt{SYTtoNCM}}

% NCM(n) and DYCK(n) bijection
\newcommand{\NCMtoDYCK}{\mathtt{NCMtoDYCK}}

\newcommand{\NCPtoNCM}{\mathtt{NCPtoNCM}}

% BW(2n,n) to symmetric NCM bijection
\newcommand{\BWtoNCM}{\mathtt{BWtoNCM}}

\newcommand{\NCPtoDYCK}{\mathtt{NCPtoDYCK}}

% For drawing matchings
\newcommand{\matching}[3]{%
	\begin{tikzpicture}[anchor=base, baseline]
	\draw (0,0) circle (0.8);
	\foreach \x in {1,...,#1} {
		\node[shape=circle,fill=black, scale=0.5,label={{((-\x+1)*360/#1)+90}:$\scriptstyle{\x}$}] (n\x) at ({((-\x+1)*360/#1)+90}:0.8) {};
	};
	\foreach \x/\y in {#2} {

		\draw[thick] (n\x) -- (n\y);
	};
	\foreach \x in {#3} {
		\draw (n\x) circle (0.15);
	};
	\end{tikzpicture}%
}

% NE-paths
\newcommand{\DP}{\mathrm{DP}}
\newcommand{\NEpath}[4]{
	\fill[white!25]  (#1) rectangle +(#2,#3);
	\fill[fill=white]
	(#1)
	\foreach \dir in {#4}{
		\ifnum\dir=0
		-- ++(1,0)
		\else
		-- ++(0,1)
		\fi
	} |- (#1);
	\draw[help lines] (#1) grid +(#2,#3);
	\draw[dashed] (#1) -- +(#3,#3);
	\coordinate (prev) at (#1);
	\foreach \dir in {#4}{
		\ifnum\dir=0
		\coordinate (dep) at (1,0);
		\else
		\coordinate (dep) at (0,1);
		\fi
		\draw[line width=2pt,-stealth] (prev) -- ++(dep) coordinate (prev);
	};
}

\newcounter{DyckHsteps}
\tikzset{count list/.code 2 args={\foreach \XX [count=\YY] in {#1}
		{\xdef#2{\YY}}},Dyck arrow/.style={ultra thick,-stealth},
	laser/.style={draw=blue},
	Dyck path/.style={count list={#1}{\DyckSteps},
		/utils/exec=\setcounter{DyckHsteps}{0},insert path={%
			foreach \XX [count=\YY,remember=\YY as \LastY (initially 0)]in {#1}
			{\ifnum\XX=0
				edge[Dyck arrow] ++(1,0) ++(1,0) coordinate(Dyck-\YY)
				\ifnum\YY<\DyckSteps
				(Dyck-\LastY) -- (Dyck-\YY) node[midway,above]{\stepcounter{DyckHsteps}\number\value{DyckHsteps}}
				\fi
				\else
				edge[Dyck arrow] ++(0,1) ++(0,1) coordinate(Dyck-\YY)
				\fi
				\pgfextra{\pgfmathtruncatemacro{\vtest}{0}\pgfmathtruncatemacro{\ftest}{0}\pgfmathtruncatemacro{\htest}{0}\pgfmathtruncatemacro{\itest}{1}
					\pgfmathtruncatemacro{\RestSteps}{\DyckSteps-\YY}
					\ifnum\YY>1
					\ifnum\RestSteps>1
					\pgfmathtruncatemacro{\ftest}{{#1}[\YY+1]+{#1}[\YY]*10} % should be 10
					\pgfmathtruncatemacro{\vtest}{{#1}[\YY-1]+10*{#1}[\YY]} % valley test
					\fi
					\ifnum\RestSteps>3
					\pgfmathtruncatemacro{\htest}{pow(-1,{#1}[\YY+3])+pow(-1,{#1}[\YY+2])
						+pow(-1,{#1}[\YY+1])+pow(-1,{#1}[\YY])+ifthenelse({#1}[\YY-1]==1,11,0))}
					\fi
					\ifnum\RestSteps>5
					\pgfmathtruncatemacro{\itest}{pow(-1,{#1}[\YY+5])+
						pow(-1,{#1}[\YY+4])+pow(-1,{#1}[\YY+3])+pow(-1,{#1}[\YY+2])
						+pow(-1,{#1}[\YY+1])+pow(-1,{#1}[\YY])+ifthenelse({#1}[\YY-1]==1,11,0)
						+ifthenelse({#1}[\YY-2]==1,11,0)}
					\fi
					\fi%\typeout{\YY:\RestSteps:\ftest,\htest,\itest,\vtest}
				}
				\ifnum\vtest=10
				%(Dyck-\YY) node[blue,fill,circle,inner sep=2pt]{}(Dyck-\YY)
				\ifnum\itest=0
				(Dyck-\YY) edge[laser] ++(3,3) (Dyck-\YY)
				\fi
				\ifnum\htest=1100
				(Dyck-\YY) edge[laser] ++(-2,-2) (Dyck-\YY)
				\fi
				\ifnum\ftest=10
				(Dyck-\YY) edge[laser] ++(1,1) (Dyck-\YY)
				\fi
				\fi
}}}}

\title[Refinements of cyclic sieving]{Refined Catalan and Narayana cyclic sieving}
\date{\today, \currenttime}

\author{Per Alexandersson}
\email{per.w.alexandersson@gmail.com}
\address{Dept.~of Mathematics,
Stockholm University,
SE-10691, Stockholm, Sweden}

\author{Svante Linusson}
\email{linusson@math.kth.se}
\address{Dept.~of Mathematics, KTH Royal Institute of Technology, SE-100 44 Stockholm, Sweden}

\author{Samu Potka}
\email{potka@kth.se}
\address{Dept.~of Mathematics, KTH Royal Institute of Technology, SE-100 44 Stockholm, Sweden}

\author{Joakim Uhlin}
\email{joakim\_uhlin@hotmail.com}
\address{Dept.~of Mathematics,
Stockholm University,
SE-10691, Stockholm, Sweden}

\keywords{Dyck paths, cyclic sieving, Narayana numbers, major index, q-analog}

%\subjclass[2010]{05E10,05E05}

\begin{document}

\begin{abstract}
We prove several new instances of the cyclic
sieving phenomenon (CSP) on Catalan objects of type $A$ and type $B$.
Moreover, we refine many of the known instances of the CSP on Catalan objects.
For example, we consider triangulations refined by the number of ``ears'',
non-crossing matchings with a fixed number of short edges,
and non-crossing configurations with a fixed number of loops and edges.
\end{abstract}

\maketitle

\setcounter{tocdepth}{1}
\tableofcontents

\section{Introduction}
The original inspiration for this paper is a natural interpolation between type $A$ and type $B$
Catalan numbers. For $n\geq 0$ consider the expression
\begin{equation}\label{eq:introInterpolation}
\binom{2n}{n} - \binom{2n}{n-s-1}.
\end{equation}
For $s=0$, we recover the $n^\thsup$ Catalan number and
for $s=1$, we recover the $(n+1)^\thsup$ Catalan number. When $s=n$,
we obtain the central binomial coefficient $\binom{2n}{n}$, which is
known as the $n^\thsup$ type~$B$~Catalan number, see~\cite{Armstrong2009}.
There are several combinatorial families of objects which are counted by the expression in
\eqref{eq:introInterpolation}, certain standard Young tableaux and lattice paths to name a few.
The expression in \eqref{eq:introInterpolation}
has the $q$-analog given by the difference of $q$-binomials
\begin{equation}\label{eq:introqInterpolation}
\qbinom{2n}{n} - q^{s+1}\qbinom{2n}{n-s-1}.
\end{equation}
For $s \in \{0,1,n\}$, the polynomials in \eqref{eq:introqInterpolation}
appear in instances of the \emph{cyclic sieving phenomenon}. Furthermore, it follows from
 \cite[Theorem 46]{AlexanderssonPfannererRubeyUhlin2020x} that there exist group actions
such that the polynomials in \eqref{eq:introqInterpolation} exhibit cyclic sieving for
all $s \in \{0,1,\dotsc,n\}$.

\begin{definition}[Cyclic sieving, \cite{ReinerStantonWhite2004}]
	Let $X$ be a set and $C_n$ be the cyclic group of order $n$
	acting on $X$. Let $f(q)\in \setN[q]$.
	We say that the \defin{triple} $(X,C_n,f(q))$ \defin{exhibits the cyclic sieving phenomenon (CSP)}
	if for all $d \in \setZ$,
	\begin{align}\label{eq:cspDef}
	|\{ x\in X : g^d \cdot x = x \}| = f(\xi^d)
	\end{align}
	where $\xi$ is a primitive $n^\thsup$ root of unity.
\end{definition}
Note that it follows immediately from the definition that $|X| = f(1)$.
In the study of cyclic sieving, it is mainly the case that the $C_n$-action and the
polynomial $f(q)$ are natural in some sense.
The group action could be some form of rotation or cyclic shift of
the elements of $X$. The polynomial usually has a closed form and is
also typically the generating polynomial for some combinatorial statistic defined on $X$.
See B.~Sagan's article~\cite{Sagan2011} for a survey of various types of CSP instances.

Many known instances of the cyclic sieving phenomenon involve a
set $X$ whose size is a Catalan number.
Once such a CSP triple is obtained, one can ask if $X$ can be partitioned $X = \sqcup_j X_j$ in
such a way that the group action on $X$ induces a group action on $X_j$ for all $j$, and,
in that case, also ask if there is a \defin{refinement} of the CSP triple in question.

\begin{definition}[Refinement of cyclic sieving]
The family $\{(X_j, C_n, f_j(q))\}_j$ of CSP triples is said to
\defin{refine} the CSP triple $(X, C_n, f(q))$ if
\begin{itemize}
	\item $\bigsqcup X_j = X$,
	\item $\sum_j f_j(q) = f(q)$ and
	\item the $C_n$-action on $X_j$ coincides with the $C_n$-action on $X$ restricted to $X_j$, for all $j$.
\end{itemize}
\end{definition}
Typically, the sets $X_i$ are of the form $X_j=\{ x \in X: \st(x)=j \}$ for
some statistic $\st:X \to \setN$ that is preserved by the group action.
Examples of such statistics are the number of cyclic descents of a word,
the number of blocks of a partition, and the number of ears of a triangulation of
an $n$-gon---all with the group action being (clockwise) cyclic rotation.
Throughout the paper, \emph{we shall consistently use
the order of the group (or group generator) as subscript}.
For example, rotation by $2\pi/n$ is denoted $\rot_n$.

For $s \in \{0,n\}$, the $q$-analog in \eqref{eq:introqInterpolation} admits a natural refinement,
so that the type $A$ and type $B$ $q$-Narayana polynomials are recovered.
The $q$-Narayana polynomials can be used to refine the aforementioned instances of the CSP.
It is therefore natural to ask if there is a $q$-analog of \eqref{eq:introInterpolation}
for arbitrary $s \in \{0,1,\dotsc,n\}$ which also exhibits similar combinatorial properties
as the type $A$ and type $B$ $q$-Narayana polynomials.
We discuss partial results and motivations behind this problem in \cref{sec:typeABNarayanaQuest}.

In the process of analyzing this intriguing question, we discovered
several new instances of the cyclic sieving phenomenon.
Some concern new $q$-analogs of Catalan numbers, while others refine known instances.
In the tables in \cref{sec:catalanObjects}, we present a
comprehensive (but most likely incomplete) overview of the current
state-of-the-art regarding the cyclic sieving phenomenon
involving Catalan and Narayana objects of type $A$ and $B$.

\subsection{Overview of our results}

We only highlight some of the results in our paper; in addition we also
prove several other results which fill gaps in the literature.
In \cref{sec:s=0}, the main result is the following theorem, which
is a new refined CSP instance on Catalan objects.
It can be stated either in terms of promotion (denoted $\prom_{2n}$)
on two-row standard Young tableaux with $k$
cyclic descents, $\SYT_{\cdes}(n^2,k)$, or non-crossing perfect matchings with $k$ short edges, $\NCM_{\sh}(n, k)$.
\begin{theorem}[\cref{thm:PMrefinedCSP}]
Let $k, n \geq 2$ be natural numbers and let
\[
 \qSYT[q]{n}{k} \coloneqq \frac{q^{k(k-2)}(1+q^n)}{[n+1]_q} \qbinom{n+1}{k} \qbinom{n-2}{k-2}.
\]
Then \[\sum_{k} \qSYT[q]{n}{k} = \Cat[q]{n},\] and
the triples
\[
(\SYT_{\cdes}(n^2,k),\langle \prom_{2n} \rangle,\qSYT[q]{n}{k})\]
and
\[
(\NCM_{\sh}(n, k),\langle \rot_{2n} \rangle, \qSYT[q]{n}{k})
\]
exhibit the cyclic sieving phenomenon.
\end{theorem}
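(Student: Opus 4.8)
The plan is to prove the statement in three stages --- the polynomial identity, a reduction between the two combinatorial models, and the verification of the cyclic sieving condition at each relevant root of unity --- of which only the last requires real work.

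\textbf{The polynomial identity.} Since $[2n]_q = [n]_q(1+q^n)$, one has $\tfrac{1+q^n}{[n+1]_q} = \tfrac{[2n]_q}{[n]_q[n+1]_q}$, so $\sum_k \qSYT[q]{n}{k} = \Cat[q]{n}$ is equivalent to $\sum_k q^{k(k-2)}\qbinom{n+1}{k}\qbinom{n-2}{k-2} = \tfrac{[n]_q}{[2n]_q}\qbinom{2n}{n} = \qbinom{2n-1}{n}$. Substituting $j = k-2$ turns the exponent into $j(j+2)$, and the resulting identity is exactly the $q$-Chu--Vandermonde identity $\qbinom{a+b}{c} = \sum_j q^{j(a-c+j)}\qbinom{a}{c-j}\qbinom{b}{j}$ with $(a,b,c) = (n+1,\,n-2,\,n-1)$. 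In particular this shows that $\qSYT[q]{n}{k} \in \setN[q]$, which is needed before invoking the CSP definition.

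\textbf{Reduction to a single model.} I would use the classical bijection $\Phi$ sending a standard Young tableau $T$ of shape $(n,n)$ to the non-crossing perfect matching $\Phi(T)$ of $[2n]$: read $1,\dots,2n$ from left to right, call $i$ an opener if it lies in the first row of $T$ and a closer otherwise, and match each closer to the nearest unmatched opener to its left. It is known that $\Phi$ intertwines promotion with rotation, $\Phi\circ\prom_{2n} = \rot_{2n}\circ\Phi$ (the two-row case of the promotion--rotation correspondence on rectangular tableaux, via the $\mathfrak{sl}_2$-web picture). With the cyclic descent extension used for two-row shapes, $i \in \CDES(T)$ means that position $i$ is an opener immediately followed by a closer, which under the nesting rule forces $\{i,i+1\}$ to be an edge of $\Phi(T)$; conversely every short edge of $\Phi(T)$ arises this way, so $\cdes(T) = \sh(\Phi(T))$. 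As both statistics are invariant under the respective actions, $\Phi$ restricts to a $C_{2n}$-equivariant bijection $\SYT_{\cdes}(n^2,k)\to\NCM_{\sh}(n,k)$, and it suffices to prove the CSP for the family $(\NCM_{\sh}(n,k),\langle\rot_{2n}\rangle,\qSYT[q]{n}{k})$.

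\textbf{Root-of-unity verification.} It is enough to treat $\xi^d$ for each divisor $d$ of $2n$, where $\xi$ is a primitive $2n$-th root of unity; put $e = 2n/d$, the order of $\xi^d$. The task is to show $\#\{M\in\NCM_{\sh}(n,k):\rot_{2n}^d M = M\} = \qSYT[q]{n}{k}(\xi^d)$. On the analytic side, evaluate $\qSYT[q]{n}{k}$ at $\xi^d$ using the $q$-Lucas theorem for Gaussian binomials at an $e$-th root of unity, tracking the prefactor $q^{k(k-2)}(1+q^n)/[n+1]_q$ carefully; this collapses the value to a product of ordinary binomial coefficients, possibly times one residual small Gaussian binomial. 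On the combinatorial side, describe the $\rot_{2n}^d$-invariant non-crossing matchings: such a matching is assembled from a fundamental sector together with at most one orbit of edges through the centre of the disk, and one records how the $k$ short edges are distributed among the $d$ sectors and within the central gadget. Matching the two counts term by term finishes the proof, and summing over $k$ is consistent with the known unrefined $q$-Catalan CSP (the $s=0$ case of the polynomials in \eqref{eq:introqInterpolation}).

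\textbf{Main obstacle.} Stages 1 and 2 are a one-line computation and bookkeeping on top of a known equivariance; the real difficulty is Stage 3, namely enumerating rotation-invariant non-crossing matchings refined by the number of short edges and reconciling this with the $q$-Lucas evaluation of $\qSYT[q]{n}{k}$. The awkward quadratic exponent $q^{k(k-2)}$ --- rather than a shift linear in $k$, as in the ordinary $q$-Narayana situation --- means the evaluation does not telescope cleanly, so one must split into cases according to the parity of $d$ and whether the centre of the disk lies on a fixed edge, keeping the short-edge count consistent through each case.
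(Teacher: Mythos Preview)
Your plan is essentially the paper's plan: reduce $\SYT_{\cdes}$ to $\NCM_{\sh}$ via the equivariant bijection, then do a root-of-unity case analysis using $q$-Lucas on the closed form. Stage~2 is exactly what the paper does (indeed, in the paper promotion on $\SYT(n^2)$ is \emph{defined} as the conjugate of rotation through $\SYTtoNCM$, and a separate lemma records $\cdes = \sh$). Stage~3 is correctly identified as the real work; the paper's case split is precisely along the lines you anticipate: (i) $d=2n$ trivial, (ii) $\order(\xi^d)\mid n$ and $\order(\xi^d)\mid k$, where the fixed matchings are encoded by binary words via a fundamental-domain argument, (iii) $\order(\xi^d)=2$, $n$ odd, $k$ even, where a long diagonal cuts the disk into two reflected halves and one must be careful that the short edge adjacent to the diagonal endpoint is not double-counted, and (iv) everything else gives zero on both sides (including a small parity claim ruling out fixed points when $\order(\xi^d)\mid 2n$ but $\order(\xi^d)\nmid n$).

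One correction to Stage~1. Your $q$-Vandermonde computation of $\sum_k \qSYT[q]{n}{k}=\Cat[q]{n}$ is clean and actually slicker than the paper's route (the paper instead first shows that the closed form equals the \emph{combinatorial} generating function $\sum_{T}q^{\maj(T)-n}$ over $\SYT_{\cdes}(n^2,k)$, from which the refinement is tautological). But your sentence ``In particular this shows that $\qSYT[q]{n}{k}\in\setN[q]$'' is not right: $q$-Vandermonde only tells you the \emph{sum} lies in $\setN[q]$, not each summand. To get nonnegativity of the individual $\qSYT[q]{n}{k}$ you still need the identification with a $\maj$-generating function (or an independent argument). This is exactly what the paper supplies via the three-term expression $\Nar[q]{n}{k}-q^{k-1}\Nar[q]{n-1}{k}+q^{k-2}\Nar[q]{n-1}{k-1}$ interpreted through elevated versus non-elevated Dyck paths, and it is worth importing that step into your write-up before invoking the CSP definition.
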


In \cref{sec:s=1}, we study the set of so-called non-crossing (1,2)-configurations on $n$
vertices, which we denote by $\NCC(n+1)$.
The cardinality of this set is the Catalan number $\Cat{n+1} = \binom{2n}{n}-\binom{2n}{n-2}$.
We define a simple ``rotate-and-flip'' action on $\NCC(n+1)$ which has order $2n$
and is reminiscent of promotion.
\begin{theorem}[\cref{thm:newCatalanCSP}]\label{thm:nccTwistCSP}
The triple
\[
\left(
\NCC(n+1), \langle \twist_{2n} \rangle, \qbinom{2n}{n} - q^2 \qbinom{2n}{n-2}
\right)
\]
exhibits the cyclic sieving phenomenon.
\end{theorem}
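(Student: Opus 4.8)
The plan is to \emph{transfer} the cyclic sieving phenomenon from an instance that is already known, rather than to verify \eqref{eq:cspDef} for $(\NCC(n+1),\langle\twist_{2n}\rangle,f)$ directly. Write $f(q)=\qbinom{2n}{n}-q^{2}\qbinom{2n}{n-2}$, and observe that this is exactly the $s=1$ specialization of \eqref{eq:introqInterpolation}; hence, by \cite[Theorem~46]{AlexanderssonPfannererRubeyUhlin2020x}, there is \emph{some} cyclic group of order $2n$ acting on a set $Y$ with $|Y|=\binom{2n}{n}-\binom{2n}{n-2}=\Cat{n+1}$ so that the resulting triple exhibits the CSP. A convenient model for $Y$ is the set of lattice paths with $n$ up-steps and $n$ down-steps that begin and end on the horizontal axis and never drop below height $-1$ (equivalently, a suitable family of two-row standard Young tableaux). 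It then suffices to construct a bijection $\Phi\colon\NCC(n+1)\to Y$ with $\Phi\circ\twist_{2n}=\psi\circ\Phi$, where $\psi$ denotes the known order-$2n$ action on $Y$: the CSP for $\NCC(n+1)$ follows at once, since $\Phi$ identifies the $\twist_{2n}^{\,d}$-fixed points with the $\psi^{d}$-fixed points for every $d\in\setZ$.

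The bijection should come from the component structure of a non-crossing $(1,2)$-configuration. Such a configuration is a non-crossing partial matching on the $n$ vertices together with a loop placed on some of the unmatched vertices, and, reading the vertices cyclically, this is precisely the data of a Motzkin path of length $n$ in which each level step carries one of two colors. Doubling every step---sending an up-step to $UU$, a down-step to $DD$, and the two colored level steps to $UD$ and $DU$---produces an element of $Y$; one checks that the image does stay weakly above height $-1$, and that the map is invertible by cutting a path in $Y$ into two-step blocks, reading off the block types, and reconstructing the unique configuration with the prescribed matching and loop pattern. Under $\Phi$ the number of edges becomes the number of $UU$-blocks and the number of loops becomes the number of level steps of the distinguished color, which is also what is needed for the refinements obtained elsewhere in the section.

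The heart of the argument is equivariance. The operator $\twist_{2n}$ is best viewed as rotation by a single position in the picture obtained by splitting each vertex into two, so that $\twist_{2n}$ genuinely has order $2n$ while $\twist_{2n}^{2}$ is ordinary rotation of the configuration by one vertex; correspondingly, $\psi$ is a promotion-type operator on $Y$ and not the naive cyclic shift of the word, which would not respect the height condition. I would prove $\Phi\circ\twist_{2n}=\psi\circ\Phi$ by tracking the doubled block that straddles the seam and analyzing the four local possibilities there---the vertex at the seam being isolated and unlooped, isolated and looped, the left endpoint of an edge, or the right endpoint of an edge---and checking, case by case, that the local change effected by the rotate-and-flip matches the corresponding jeu-de-taquin slide of the outermost doubled block under $\psi$. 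This seam analysis, together with nailing down the definition of $\twist_{2n}$ so that it lines up with promotion across the cut, is where I expect the genuine work to be.

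If a clean equivariant bijection turns out to be awkward, the CSP can instead be checked directly: for each $d\mid 2n$ one evaluates $f(\xi^{d})$ using the $q$-Lucas congruence for Gaussian binomials at roots of unity, and separately counts the $\twist_{2n}^{\,d}$-fixed configurations. Because $\twist_{2n}^{2}$ is a rotation, for even $d$ this is a count of rotationally symmetric non-crossing $(1,2)$-configurations, and for odd $d$ a count of configurations symmetric under a reflection; both counts collapse, on passing to the quotient, to smaller non-crossing objects and can be matched against the root-of-unity evaluations. The reflection case, where a loop or an edge may sit on the axis of symmetry, would be the delicate point of this alternative route.
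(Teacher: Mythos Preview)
Your primary route has a circularity problem. You want to transfer the CSP from a triple $(Y,\langle\psi\rangle,f)$ that is ``already known,'' but no such triple is actually known: the result you cite from \cite{AlexanderssonPfannererRubeyUhlin2020x} is purely existential, and you never say what $\psi$ is. You gesture at ``a promotion-type operator'' on $\SYT((n{+}1,n)/(1))$, but jeu-de-taquin promotion on that skew shape is not known to have order $2n$, and there is no established CSP for it with the polynomial $\qbinom{2n}{n}-q^{2}\qbinom{2n}{n-2}$. Indeed, the content of the theorem is precisely to supply the first explicit order-$2n$ action realizing this $s=1$ instance; you cannot prove equivariance with an action that has not been constructed. (Your $\Phi$, incidentally, is a pleasant bijection, but it does not buy anything until a concrete $\psi$ with a proven CSP is on the table.)

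Your fallback also rests on a false premise: $\twist_{2n}^{2}$ is \emph{not} a rotation. One has $\twist_{2n}=\rot_n\circ\flip$, and $\rot_n$ and $\flip$ do not commute; already for $n=2$ one checks that $\twist_{4}^{2}$ sends the empty configuration to the configuration with two loops, while $\rot_2^{2}$ is the identity. Consequently the fixed points of $\twist_{2n}^{d}$ are neither rotationally symmetric configurations (for even $d$) nor reflection-symmetric ones (for odd $d$). The paper proceeds by direct verification, as you suggest, but the correct case analysis is different and more delicate: for odd $m\mid 2n$ one argues via a forced diagonal edge and a parity obstruction on isolated vertices; for even $m=2d$ one uses that $\twist_{2n}^{n}$ flips every isolated vertex, and in the subcase $n/d$ odd the fixed-point count reduces to the identity $\sum_{i}\binom{d}{i}\binom{d-i}{i}2^{\,d-2i}=\binom{2d}{d}$, which is proved bijectively. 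You would need to redo the fixed-point side from scratch with the actual behavior of $\twist_{2n}^{m}$ in hand.
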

Note that we use a quite non-standard $q$-analog of the Catalan numbers here,
which has not appeared in the context of cyclic sieving before.
Cyclic sieving on non-crossing (1,2)-configurations was studied earlier by M.~Thiel~\cite{Thiel2017}, with rotation as the group action.
In \cref{thm:thielRefinement} and \cref{cor:thielRefinementNarayana},
we refine Thiel's result. In particular, we obtain a new CSP instance involving the
$q$-Narayana polynomial $\Nar[q]{n+1}{k}$.

In \cref{sec:s=n}, we study various instances of cyclic sieving involving the
type~$B$~Catalan numbers, $\binom{2n}{n}$. Some results have more or less appeared in earlier works, but we make some of the results more explicit.
One novel result is a type~$B$ version of \cref{thm:nccTwistCSP},
where we consider the twist action on \emph{type~$B$~non-crossing (1,2)-configurations}.
Briefly, such objects are obtained from elements in $\NCC(n)$ by choosing to mark one edge.
\begin{theorem}[\cref{thm:typeB-NCC-Twist}]
The triple
\[
\left(
\NCC^B(n+1), \langle \twist^2_{2n} \rangle, \qbinom{2n}{n}
\right)
\]
exhibits the cyclic sieving phenomenon.
\end{theorem}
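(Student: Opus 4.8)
The plan is to verify the cyclic sieving identity \eqref{eq:cspDef} directly. For every $d\in\setZ$ we must compute the number of elements of $\NCC^B(n+1)$ fixed by $(\twist^2_{2n})^d=\twist^{2d}_{2n}$ and match it with the evaluation of $\qbinom{2n}{n}$ at $\xi^d$, where $\xi$ is a primitive $n$\thsup{} root of unity. First one records that $\twist^2_{2n}$ has order $n$ on $\NCC^B(n+1)$, so that $C_n=\langle\twist^2_{2n}\rangle$ really acts, and that the case $d\equiv 0$ is the known cardinality $|\NCC^B(n+1)|=\binom{2n}{n}$.

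For the evaluation, put $g=\gcd(n,d)$, so that $\twist^{2d}_{2n}$ has order $e=n/g$ and $\xi^d$ is a primitive $e$\thsup{} root of unity. Since $e\mid n$, the $q$-Lucas theorem yields
\[
\qbinom{2n}{n}\Big|_{q=\xi^d}=\binom{2n/e}{n/e}=\binom{2g}{g}=\CatB{g}.
\]
Thus the theorem reduces to the assertion that $\twist^{2d}_{2n}$ fixes exactly $\CatB{g}$ elements of $\NCC^B(n+1)$.

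To count these fixed points I would use the description of an element of $\NCC^B(n+1)$ as an ordinary non-crossing $(1,2)$-configuration together with a marked edge. A decorated configuration fixed by $\twist^{2d}_{2n}$ has an underlying configuration invariant under the associated rotation-and-flip symmetry, and a marked edge that is itself preserved by it. Via the usual ``folding'' picture for rotationally symmetric non-crossing objects, one should then identify such symmetric decorated configurations---a symmetric underlying configuration together with an admissible fixed marked edge---with the elements of $\NCC^B(g+1)$, whose number is $\CatB{g}$ by the known cardinality of these objects; equivalently, one checks that they are equinumerous with the $n$-subsets of $\setZ/2n$ left invariant by a rotation of order $e$, of which there are $\binom{2n/e}{n/e}=\binom{2g}{g}$, in parallel with the Reiner--Stanton--White cyclic sieving of $\bigl(\binom{[2n]}{n},C_{2n},\qbinom{2n}{n}\bigr)$ \cite{ReinerStantonWhite2004}.

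The main obstacle is precisely this last structural step: one must pin down how the extra type~$B$ decoration interacts with the rotation-and-flip symmetry---how many admissible positions the marked edge can occupy for a given symmetric underlying configuration, and why the presence of the flip is exactly what turns the fixed-point count into the central binomial coefficient $\binom{2g}{g}$ rather than into an ordinary Catalan-type number. Once the symmetric decorated configurations have been matched correctly with a smaller copy of $\NCC^B$ (or with the symmetric $n$-subsets of $\setZ/2n$), the remainder of the argument is routine bookkeeping.
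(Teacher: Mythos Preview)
Your setup is correct: $\twist^2_{2n}$ has order $n$, the $q$-Lucas evaluation gives $\binom{2g}{g}$ at a primitive $(n/g)^\thsup$ root of unity, and everything reduces to showing that $\twist^{2g}_{2n}$ has exactly $\binom{2g}{g}$ fixed points on $\NCC^B(n+1)$ for each $g\mid n$. But the proof stops precisely there. You propose a ``folding'' bijection to $\NCC^B(g+1)$ (or to rotation-invariant $n$-subsets of $\setZ/2n$) without constructing it, and you yourself flag this as the main obstacle. That obstacle is real: $\twist^{2g}_{2n}$ is \emph{not} a rotation---it is rotation by $2g$ steps composed with loop-toggles at $2g$ isolated vertices---so its fixed points are not rotationally symmetric configurations in any naive sense, and the ``usual folding picture'' does not apply without substantial further argument.

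The paper takes a different and much shorter route: it reduces to the already-proved type~$A$ result, \cref{thm:newCatalanCSP}. An element of $\NCC^B(n+1)$ either carries no marked edge, in which case it lies in $\NCC(n+1)$ and its contribution to the fixed-point count of $\twist^{2g}_{2n}$ is read off directly from the proof of \cref{thm:newCatalanCSP} (and equals $\binom{2g}{g}$ whenever $2g<n$), or it carries a single marked proper edge. A marked edge can only be fixed if the rotation part of $\twist^{2g}_{2n}$ sends that edge to itself as a set; a short parity check shows this forces $2g\in\{n,2n\}$ (the putative diameter case $4\mid n$, $2g=n/2$ is ruled out because it would require a perfect matching on an odd number of vertices on each side). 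For $2g=n$ the fixed configurations are exactly the non-crossing perfect matchings on $n=2g$ vertices with the $g+1$ choices of marking one edge or none, giving $(g+1)\Cat{g}=\binom{2g}{g}$; the case $2g=2n$ is the total cardinality. This ``type~$A$ part plus marked-edge correction'' is the idea your outline is missing.
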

As in type $A$, we also obtain a refined cyclic sieving result
in \cref{thm:typeB-NCC-rot-CSP} where we consider rotation instead.

In \cref{sec:branden}, we briefly consider two-column semistandard Young tableaux, and note in
\cref{thm:ssytNarayanaCSP} that $(\SSYT(2^k,n),\langle\kprom_n\rangle, \Nar[q]{n+1}{k+1})$
is a CSP triple, where $\kprom_n$ denotes the so-called $k$-promotion and
$\SSYT(2^k,n)$ is the set of semistandard Young tableaux of
the rectangular shape $2^k$ whose maximal entry is at most $n$.

In \cref{sec:earRefinement}, we refine the classical CSP triple on triangulations of
an $n$-gon by taking \emph{ears} into consideration. An ear in a triangulation
is a triangle formed by three cyclically consecutive vertices.
We let $\TRI_{\ear}(n,k)$ denote the set of triangulations
of an $n$-gon with $k$ ears.

\begin{theorem}[\cref{thm:earTriRefinement} and \cref{thm:refinedTriCSP}]
Let $2 \leq k \leq \frac{n}{2}$ and let
\begin{equation*}
\Tri[q]{n}{k} \coloneqq
q^{k(k-2)} \frac{[n]_q}{[k]_q} \qbinom{n-4}{2k-4}\Cat[q]{k-2}
\left( \sum_{j=0}^{n-2k} q^{j(n-2)}\qbinom{n-2k}{j}\right).
\end{equation*}
Then \[\sum_k \Tri[q]{n}{k} = \Cat[q]{n-2},\] and
\[
\left(
\TRI_{\ear}(n,k), \langle \rot_n \rangle, \Tri[q]{n}{k}
\right)
\]
exhibits the cyclic sieving phenomenon.
\end{theorem}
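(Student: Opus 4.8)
The plan is to prove the two assertions separately: the polynomial identity $\sum_k\Tri[q]{n}{k}=\Cat[q]{n-2}$ of \cref{thm:earTriRefinement}, and then the cyclic sieving of \cref{thm:refinedTriCSP} via the root-of-unity criterion. Since $\rot_n$ permutes the ears of a triangulation, the decomposition $\TRI(n)=\bigsqcup_k\TRI_{\ear}(n,k)$ is $\rot_n$-stable, so for the second assertion it suffices to show that for every $d\in\setZ$,
\[
\#\{T\in\TRI_{\ear}(n,k):\rot_n^dT=T\}=\Tri[q]{n}{k}\big|_{q=\xi^d},\qquad\xi=e^{2\pi i/n}.
\]

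For the identity I would use the dual tree of a triangulation $T$ of the $n$-gon: a plane tree on $n-2$ nodes of maximum degree $3$ whose leaves are exactly the ears of $T$, which together with a choice of root boundary edge recovers $T$. A maximum-degree-$3$ tree with $k$ leaves has exactly $k-2$ nodes of degree $3$ and $n-2k$ nodes of degree $2$; contracting the degree-$2$ nodes leaves a skeleton counted, as a rooted object, by $\Cat{k-2}$, and the $n-2k$ degree-$2$ nodes are redistributed over its $2k-3$ edges in $\qbinom{n-4}{2k-4}$ ways at $q=1$, giving $\Tri[1]{n}{k}=|\TRI_{\ear}(n,k)|$. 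The $q$-refinement, together with the factors $\tfrac{[n]_q}{[k]_q}$, $q^{k(k-2)}$ and $\sum_jq^{j(n-2)}\qbinom{n-2k}{j}$, comes from transporting a suitable statistic (e.g.\ the major index) through the standard bijection between triangulations and Dyck paths, under which the ears become a combinatorial feature of the path; summing over $k$ then collapses to $\Cat[q]{n-2}$. Alternatively the identity can be verified directly from $q$-Vandermonde and the $q$-binomial theorem.

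For the cyclic sieving, write $m=n/\gcd(n,d)$ for the order of $\rot_n^d$, so $\xi^d$ is a primitive $m$-th root of unity $\omega$. The key input is the classical structure of rotation-symmetric triangulations: the centre of the $n$-gon lies either in the relative interior of a single diagonal of $T$ or in the interior of a single triangle of $T$, and $\rot_n^d$-invariance forces that cell to be stabilized; since a nontrivial rotation moves every polygon vertex, a stabilized cell forces $m\in\{1,2,3\}$. Hence for $m\geq4$ the left-hand side is $0$, and by a short $q$-Lucas computation the right-hand side vanishes as well (if $m\mid k$ then in $\Cat[q]{k-2}=\qbinom{2(k-2)}{k-2}/[k-1]_q$ the lower-index digit $(k-2)\bmod m=m-2$ exceeds the corresponding upper-index digit of $\qbinom{2(k-2)}{k-2}$, so $\qbinom{2(k-2)}{k-2}$, and hence $\Cat[q]{k-2}$, vanishes at $\omega$; otherwise, using $2\le k\le n/2$, either $\qbinom{n-4}{2k-4}$ or $[n]_q/[k]_q$ already has a net zero at $\omega$), in agreement with the empty fixed-point set. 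For $m=1$ this is the $q=1$ case above. For $m=2$ (so $2\mid n$) the fixed triangulations are exactly those containing one of the $n/2$ diameters, with the part on one side an arbitrary triangulation of an $(\tfrac n2+1)$-gon and the part on the other side its rotate; the $k$ ears split into two mirror-image groups of $k/2$, so the count is $0$ for odd $k$ and $\tfrac n2$ times a count of triangulations of the $(\tfrac n2+1)$-gon, with the diameter as a marked boundary edge, by the number of triangles using at least two unmarked boundary edges. For $m=3$ (so $3\mid n$) the same holds with the diameter replaced by the unique central triangle $\{a,a+\tfrac n3,a+\tfrac{2n}3\}$ and the $(\tfrac n2+1)$-gon by an $(\tfrac n3+1)$-gon. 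In each case one evaluates $\Tri[q]{n}{k}$ at $\omega$ via $q$-Lucas---which turns $\qbinom{n-4}{2k-4}$ and $\Cat[q]{k-2}$ into ordinary binomials times small $q$-binomials at $\omega$ and collapses $\sum_jq^{j(n-2)}\qbinom{n-2k}{j}$ to $2^{(n-2k)/m}$---and matches it against this reduced count, which by the tree decomposition applied to the smaller polygon is again a product of the same shape, with the marked-edge correction accounted for.

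The main obstacle is the $m\in\{2,3\}$ comparison. On the combinatorial side one must pin down exactly how an ear of a symmetric $T$ sits relative to the central diagonal or triangle---in particular which triangles of the half-triangulation survive as ears of $T$, including the degenerate small cases (for instance $n=4$ and $n=6$), where the triangle on the central chord itself becomes an ear of $T$---so that the fixed-point count is genuinely a clean marked-edge count for the $(\tfrac nm+1)$-gon. On the algebraic side, proving that this reduced count equals $\Tri[q]{n}{k}|_{q=\omega}$ is a somewhat intricate $q$-Lucas bookkeeping; the cleanest organization is a single lemma asserting that the order-$m$ fixed-point count equals $\tfrac nm$ times the appropriate marked-edge count of triangulations of the $(\tfrac nm+1)$-gon, together with a matching evaluation of $\Tri[q]{n}{k}$ at $\omega$. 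The remaining ingredients---the structure theorem, the vanishing for $m\geq4$, and the $m=1$ case---are routine.
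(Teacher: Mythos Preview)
Your plan for the cyclic sieving part (\cref{thm:refinedTriCSP}) is essentially the paper's own: evaluate $\Tri[q]{n}{k}$ at primitive $m^{\thsup}$ roots of unity via $q$-Lucas and \cref{lem:qFraction}, use the structure theorem that a nontrivial rotational symmetry of a triangulation forces $m\in\{2,3\}$, and match against the half/third-polygon fixed-point count. The paper writes that count as
\[
\frac{n}{m}\left(\frac{n/m+1-2k/m}{n/m+1}\Tri{n/m+1}{k/m}+\frac{2(k/m+1)}{n/m+1}\Tri{n/m+1}{k/m+1}\right)
\]
for $m\in\{2,3\}$ and then invokes a computer algebra system to verify the match with the root-of-unity evaluation; your anticipated ``intricate $q$-Lucas bookkeeping'' is exactly what remains, and the paper does not carry it out by hand either.

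The genuine gap is the identity $\sum_k\Tri[q]{n}{k}=\Cat[q]{n-2}$. Your sketch proposes to obtain it by transporting a major-index-type statistic through the triangulation--Dyck bijection so that ears become a feature of the path, and summing; but no such interpretation of $\Tri[q]{n}{k}$ is known, and the paper even remarks that $\Tri[q]{n}{k}$ is not palindromic (e.g.\ $\Tri[q]{6}{2}=1+q^4+q^5+q^8$), so it cannot be a product of $q$-binomials or the generating function of a single nicely-behaved statistic on an obvious set. Your fallback ``directly from $q$-Vandermonde and the $q$-binomial theorem'' is in the right spirit but drastically understates the work: the paper's proof rewrites everything in $q$-Pochhammer form, performs a nontrivial change of summation index $(k,j)\mapsto(r,s)=(k,k+j)$ in the resulting double sum, and then applies the $q$-Chu--Vandermonde identity \emph{twice}---once to collapse the inner sum to a single ${}_2\phi_1$ value, and once more to sum the remaining series. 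Neither a single $q$-Vandermonde nor the $q$-binomial theorem suffices on its own, and the intermediate manipulations (using identities for $(aq^{-n};q)_k$, $(q^{-n};q)_k$, etc.) are where the proof actually lives. As written, this part of your proposal is not yet a proof.
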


In the last section, we consider another natural interpolation between
type $A$ and type $B$ Catalan objects and prove a
cyclic sieving result using standard methods.

Finally, a word about the proofs in this paper. There are traditionally two different approaches
to proving instances of the cyclic sieving phenomenon --- combinatorial\footnote{Or ``brute-force''.}
or representation-theoretical (using vector spaces and diagonalization).
In this paper we exclusively use the combinatorial approach, meaning that we need to explicitly evaluate the CSP-polynomials at roots of
unity and also count the number fixed points of the sets under the group actions.
%This is in contrast to the more abstract representation %theoretical approached, perhaps most famously
%used by B.~Rhoades in \cite{Rhoades2010}.
It may also involve the use of equivariant bijections to derive new CSP triples from the previously known ones.

\section{Preliminaries}

We shall use standard notation in the area of combinatorics, see the go-to references
\cite{StanleyEC2,Macdonald1995}. In particular, $\mdefin{[n]} \coloneqq \{1,2,\dotsc,n\}$
and it should not be confused with the $q$-analog $[n]_q$ defined further down.

\subsection{Words and paths}

Given a word $w=w_1 \dotsm w_n \in [k]^n$, a \defin{descent} is an index $i \in [n-1]$
such that $w_i > w_{i+1}$. We let the \defin{major index}, denoted $\mdefin{\maj(w)}$,
be the sum of the descents of $w$. An \defin{inversion} in $w$
is a pair of indices $i,j \in [n]$ such that $i<j$ and $w_i>w_j$.
We let $\mdefin{\inv(w)}$ be the number of inversions of $w$.
Let $\mdefin{\BW(n,k)}$ denote the set of binary words of length $n$ with exactly $k$ ones.

Let $\mdefin{\PATHS(n)}$ be the set of paths from $(0,0)$ to $(n,n)$
using north, $(1,0)$, and east, $(0,1)$, steps.
A \defin{peak} is a north step followed by an east step, and a \defin{valley} is an east step followed by a north step.
We have an obvious bijection $\PATHS(n) \leftrightarrow \BW(2n,n)$
where we identify north steps with zeros.
Given $P \in \PATHS(n)$, we let $\mdefin{\maj(P)}$ be
defined as the sum of the positions of the valleys of the path $P$.
Observe that this coincides with the major index of the corresponding binary word,
as valleys correspond to descents.
We shall also let $\mdefin{\pmaj(P)}$ denote the sum of the positions of
the \emph{peaks}.
For a path $P \in \PATHS(n)$, we let the \defin{depth}, $\mdefin{\dep(P)}$ be the largest
value of $r \geq 0$ such that the path touches the line $y=x-r$.
Let us define $\mdefin{\PATHS_{s}(n)} \subseteq  \PATHS(n)$ as the set of paths with
$\dep(P)\leq s$. We set $\mdefin{\DYCK(n)}\coloneqq \PATHS_0(n)$.

\subsection{\texorpdfstring{$q$}{q}-analogs}

Roughly, a $q$-analog of a certain expression is a rational function in the variable $q$
from which we can obtain the original expression in the limit $q \to 1$.
\begin{definition}
	Let $n \in \setN$. Define the \defin{$q$-analog of $n$} as
	$\mdefin{[n]_q} \coloneqq 1+q+\dotsb+q^{n-1}$.
	Furthermore, define the \defin{$q$-factorial of $n$} as
	$
	\mdefin{[n]_q!} \coloneqq [n]_q[n-1]_q \dotsm [1]_q
	$.
	Lastly, the \defin{$q$-binomial coefficient} is defined as
	\[
	\mdefin{\qbinom{n}{k}} \coloneqq
	\dfrac{[n]_q!}{[n-k]_q![k]_q!} = \sum_{b \in \BW(n,k)}q^{\inv(b)} = \sum_{b \in \BW(n,k)}q^{\maj(b)}
	\]
	if $n \geq k\geq 0$, and $\qbinom{n}{k}\coloneqq 0$ otherwise.
	Note that the $q$-binomial coefficients are polynomials in the variable $q$,
	see \cite{StanleyEC1} for more background.
	The \defin{$q$-multinomial coefficients} are defined in a similar manner.
\end{definition}

\begin{theorem}[$q$-Vandermonde identity]\label{thm:q-vandermonde}
 The \defin{$q$-Vandermonde identity} states that for non-negative integers $a$, $b$, $c$,
 we have that
 \begin{equation}
 \qbinom{a+b}{c}  =  \sum_{j} q^{j(a-c+j)} \qbinom{a}{c-j} \qbinom{b}{j}.
 \end{equation}
\end{theorem}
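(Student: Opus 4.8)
The plan is to give a short combinatorial proof by decomposing binary words, in keeping with the combinatorial philosophy of the paper. Recall from the definition of the $q$-binomial coefficient that $\qbinom{n}{k} = \sum_{b \in \BW(n,k)} q^{\inv(b)}$. Reversing a binary word is an involution on $\BW(n,k)$ that turns each inversion (a pair $i<j$ with $b_i > b_j$) into a \emph{non-inversion} (a pair $i<j$ with $b_i < b_j$) and vice versa; writing $\operatorname{coinv}(b)$ for the number of non-inversions of $b$, this immediately gives the companion formula $\qbinom{n}{k} = \sum_{b \in \BW(n,k)} q^{\operatorname{coinv}(b)}$.

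Next I would interpret the left-hand side $\qbinom{a+b}{c}$ via this $\operatorname{coinv}$-formula and split each word $w \in \BW(a+b,c)$ as a concatenation $w = uv$ with $|u| = a$ and $|v| = b$. If $v$ contains exactly $j$ ones then $u$ contains $c-j$ ones, hence $a-c+j$ zeros, and as $w$ ranges over all words with a length-$b$ suffix having $j$ ones, $u$ ranges over $\BW(a,c-j)$ and $v$ over $\BW(b,j)$ independently. Every non-inversion of $w$ is either a non-inversion of $u$, a non-inversion of $v$, or a ``crossing'' pair $i<j$ with $i$ in $u$ and $j$ in $v$; such a crossing pair is a non-inversion precisely when $w_i = 0$ and $w_j = 1$, i.e.\ when it pairs one of the $a-c+j$ zeros of $u$ with one of the $j$ ones of $v$. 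Hence $\operatorname{coinv}(w) = \operatorname{coinv}(u) + \operatorname{coinv}(v) + j(a-c+j)$.

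Summing $q^{\operatorname{coinv}(w)}$ over all $w$ now factors, and yields
\[
\qbinom{a+b}{c} = \sum_{j} q^{j(a-c+j)} \Bigl(\sum_{u \in \BW(a,c-j)} q^{\operatorname{coinv}(u)}\Bigr)\Bigl(\sum_{v \in \BW(b,j)} q^{\operatorname{coinv}(v)}\Bigr) = \sum_j q^{j(a-c+j)}\qbinom{a}{c-j}\qbinom{b}{j},
\]
which is the claimed identity. The range of $j$ is immaterial, since $\qbinom{\cdot}{\cdot}$ was defined to be $0$ whenever an argument is out of range and, correspondingly, $\BW(a,c-j)$ or $\BW(b,j)$ is empty for those $j$.

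I do not expect any genuine obstacle here: the only point requiring care is bookkeeping of the exponent. Working with $\operatorname{coinv}$ rather than $\inv$ (equivalently, reversing one of the two blocks before concatenating) is exactly what makes the crossing contribution come out as $j(a-c+j)$ rather than $(c-j)(b-j)$, so one should double-check that $\qbinom{a}{c-j}$ and $\qbinom{b}{j}$ pair with the correct power of $q$. If a purely algebraic proof were preferred instead, one could induct on $b$ using the $q$-Pascal recurrence $\qbinom{n}{k} = \qbinom{n-1}{k-1} + q^{k}\qbinom{n-1}{k}$, but the word-splitting argument is shorter and fits the combinatorial style of the rest of the paper.
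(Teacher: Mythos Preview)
Your argument is correct: the coinv interpretation of the $q$-binomial, the decomposition $w=uv$, and the count $j(a-c+j)$ for the crossing non-inversions are all accurate, and the convention $\qbinom{n}{k}=0$ for $k<0$ or $k>n$ takes care of the summation range. Note, however, that the paper does not actually supply a proof of this theorem; it is merely stated as a standard identity and then invoked later (in the proof of \cref{lem:refinementOfqNarayana}). So there is nothing to compare against---your combinatorial proof is a perfectly good way to fill in what the paper treats as background.
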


\begin{theorem}[$q$-Lucas theorem, see e.g.~\cite{Sagan1992}]\label{thm:q-Lucas}
	Let $n, k\in \setN$. Let $n_1, n_0, k_1, k_0$ be the unique
	natural numbers satisfying $0 \leq n_0, k_0 \leq d-1$ and $n=n_1d+n_0$, $k=k_1d+k_0$. Then
	\[
	\qbinom{n}{k} \equiv \binom{n_1}{k_1}\qbinom{n_0}{k_0} \pmod{\Phi_d(q)}
	\]
	where $\Phi_d(q)$ is the $d^\thsup$ cyclotomic polynomial. In particular, we have
	\begin{equation}
	\qbinom[\xi]{n}{k} = \binom{n_1}{k_1}\qbinom[\xi]{n_0}{k_0}
	\end{equation}
	if $\xi$ is a primitive $d^\thsup$ root of unity.
\end{theorem}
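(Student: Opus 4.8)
The plan is to work in the quotient ring $R \coloneqq \setZ[q]/(\Phi_d(q))$. By the defining property of the cyclotomic polynomial, the image of $q$ in $R$ is a primitive $d^{\thsup}$ root of unity, so $q^d = 1$ in $R$ and both $q$ and any integer $\pm 1$ are units; concretely $R \cong \setZ[\xi]$, which is an integral domain. The asserted congruence modulo $\Phi_d(q)$ is precisely the statement that the two elements $\qbinom{n}{k}$ and $\binom{n_1}{k_1}\qbinom{n_0}{k_0}$ are equal in $R$, and the ``in particular'' clause then follows immediately by substituting $q=\xi$, since $\Phi_d(\xi)=0$. So it suffices to establish the identity in $R$.

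The only external ingredient I would use is the finite $q$-binomial theorem
\[
\prod_{j=0}^{N-1}\bigl(1 + q^j x\bigr) \;=\; \sum_{k=0}^{N} q^{\binom{k}{2}}\,\qbinom{N}{k}\, x^k ,
\]
valid in $\setZ[q][x]$ and provable in one line from the $q$-Pascal recursion. Writing $n = n_1 d + n_0$, split $\prod_{j=0}^{n-1}(1+q^j x)$ into $n_1$ consecutive blocks of $d$ factors, followed by a tail of $n_0$ factors. In the $m^{\thsup}$ block, $\prod_{i=0}^{d-1}(1 + q^{md+i}x) = \prod_{i=0}^{d-1}(1 + q^i(q^{md}x))$, and $q^{md}=1$ in $R$, so the block equals $\prod_{i=0}^{d-1}(1+q^i x)$. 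The key elementary computation is that in $R[x]$ one has $\prod_{i=0}^{d-1}(1+q^i x) = 1 + (-1)^{d+1}x^d$; write $\varepsilon \coloneqq (-1)^{d+1}$. This follows from $\prod_{i=0}^{d-1}(y - q^i) = y^d - 1$ (the $q^i$ being exactly the $d^{\thsup}$ roots of unity in the domain $R$) via the substitution $y = -x^{-1}$ and clearing denominators. The tail, by the $q$-binomial theorem again, contributes $\prod_{i=0}^{n_0-1}(1 + q^i x) = \sum_{k_0=0}^{n_0} q^{\binom{k_0}{2}}\qbinom{n_0}{k_0}x^{k_0}$.

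Combining these, in $R[x]$ we obtain $\sum_k q^{\binom{k}{2}}\qbinom{n}{k}x^k = (1+\varepsilon x^d)^{n_1}\sum_{k_0}q^{\binom{k_0}{2}}\qbinom{n_0}{k_0}x^{k_0}$. Comparing the coefficient of $x^k$, and using that the base-$d$ decomposition $k = k_1 d + k_0$ (with $0 \le k_0 < d$) is unique, yields
\[
q^{\binom{k}{2}}\,\qbinom{n}{k} \;=\; \varepsilon^{k_1}\, q^{\binom{k_0}{2}}\,\binom{n_1}{k_1}\,\qbinom{n_0}{k_0} \qquad \text{in } R .
\]
It remains to cancel the unit factor. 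From $\binom{k}{2} = \binom{k_1 d}{2} + \binom{k_0}{2} + k_0 k_1 d$ and $q^d = 1$ we get $q^{\binom{k}{2}} = q^{\binom{k_1 d}{2}}q^{\binom{k_0}{2}}$, and a short parity check shows $q^{\binom{k_1 d}{2}} = \varepsilon^{k_1}$: if $d$ is odd then $\binom{d}{2}$ is a multiple of $d$ and $\varepsilon = 1$, while if $d$ is even then $q^{d/2} = -1$ forces $q^{\binom{k_1 d}{2}} = (-1)^{k_1} = \varepsilon^{k_1}$. Hence both sides of the displayed identity carry the common unit $\varepsilon^{k_1}q^{\binom{k_0}{2}}$; cancelling it gives $\qbinom{n}{k} = \binom{n_1}{k_1}\qbinom{n_0}{k_0}$ in $R$, as required.

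I expect the only genuine subtlety to be this last piece of bookkeeping --- tracking $q^{\binom{k}{2}}$ against $q^{\binom{k_0}{2}}$ together with the parity of $d$; everything else is a routine application of the $q$-binomial theorem. An alternative that sidesteps the $q^{\binom{k}{2}}$ powers entirely is to induct on $n_1$ using the $q$-Vandermonde identity (\cref{thm:q-vandermonde}): one first shows $\qbinom{n_1 d}{m} \equiv \binom{n_1}{m/d} \pmod{\Phi_d(q)}$ when $d \mid m$ and $\qbinom{n_1 d}{m}\equiv 0$ otherwise, where the inductive step needs only that $\Phi_d(q) \mid \qbinom{d}{j}$ for $0 < j < d$ (true because $\Phi_d(q) \mid [d]_q$ while $\Phi_d(q) \nmid [j]_q$ and $\Phi_d(q) \nmid [d-j]_q$), and then a single application of $q$-Vandermonde with $a = n_1 d$, $b = n_0$, $c = k$ collapses, modulo $\Phi_d(q)$, to the unique surviving summand $j = k_0$, which gives the theorem.
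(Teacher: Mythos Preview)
The paper does not supply its own proof of this theorem; it simply cites Sagan's paper for it. Your argument is correct and is essentially the standard generating-function proof (and, in fact, is the approach taken in the cited reference): expand $\prod_{j=0}^{n-1}(1+q^j x)$ via the $q$-binomial theorem, collapse each length-$d$ block to $1+(-1)^{d+1}x^d$ using $q^d=1$ in $\setZ[q]/(\Phi_d(q))$, and compare coefficients. The sign/parity bookkeeping you flag at the end is exactly the one nontrivial point, and you handle it correctly; the justification that $q^{d/2}=-1$ in $R$ for even $d$ is sound since $\Phi_d$ is prime in $\setZ[q]$ and divides $q^d-1=(q^{d/2}-1)(q^{d/2}+1)$ but not the first factor. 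Your alternative sketch via $q$-Vandermonde is also valid and is another route one sees in the literature.
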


When $\xi$ is a root of unity, let $\mdefin{\order}(\xi)$ denote the smallest
positive integer with the property that $\xi^{\order(\xi)}=1$.
The following is a standard lemma that should not need a proof.
\begin{lemma}\label{lem:qFraction}
Let $n, k, d\in \setN$ and let $\xi$ be a primitive $n^\thsup$ root of unity. Then
\[
\lim_{q\to \xi^d}\frac{[n]_q}{[k]_q}=\begin{cases}
n/k & \text{if } \order(\xi^d) \mid k, \\[1em]
0 & \text{otherwise}.
\end{cases}
\]
\end{lemma}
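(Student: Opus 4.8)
The plan is to reduce everything to the identity $[m]_q = (q^m-1)/(q-1)$ together with the standard fact that $e := \order(\xi^d) = n/\gcd(n,d)$, so that in particular $e \mid n$ always. Write $\zeta := \xi^d$, a primitive $e^\thsup$ root of unity, and keep the standing assumption $k \geq 1$ (so that $[k]_q$ is not identically zero). First I would dispose of the degenerate case $\zeta = 1$, that is $e = 1$, equivalently $n \mid d$: here $[m]_q \to [m]_1 = m$ for every $m$, so $[n]_q/[k]_q \to n/k$, which agrees with the claim since $e = 1$ divides $k$. Henceforth I assume $e \geq 2$, so that $\zeta \neq 1$ and $[m]_\zeta = (\zeta^m-1)/(\zeta-1)$ is a finite complex number that vanishes precisely when $e \mid m$.

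Next I would treat the case $e \nmid k$. Then $[k]_\zeta \neq 0$, so the rational function $q \mapsto [n]_q/[k]_q$ is continuous at $q = \zeta$; since $e \mid n$ we have $[n]_\zeta = 0$, and therefore the limit equals $[n]_\zeta/[k]_\zeta = 0$, as claimed.

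Finally, the case $e \mid k$, where numerator and denominator both vanish at $\zeta$. I would compute
\[
\lim_{q \to \zeta}\frac{[n]_q}{[k]_q} = \lim_{q\to\zeta}\frac{q^n-1}{q^k-1} = \lim_{q\to\zeta}\frac{nq^{n-1}}{kq^{k-1}} = \frac{n}{k}\,\zeta^{\,n-k}
\]
by l'H\^opital's rule, and then observe that $\zeta^n = (\xi^n)^d = 1$ and $\zeta^k = 1$ (because $e \mid k$), so $\zeta^{\,n-k} = 1$ and the limit is $n/k$. A calculus-free alternative: $q^m - 1 = \prod_{f \mid m}\Phi_f(q)$ is squarefree, hence $\Phi_e$ divides each of $q^n-1$ and $q^k-1$ exactly once; cancelling it leaves a rational function regular and nonzero at $\zeta$, whose value one reads off from the product formula, again obtaining $n/k$.

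Since the statement is elementary, I do not expect a genuine obstacle; the only points needing a little care are the bookkeeping of the edge cases ($\zeta = 1$ and $k \geq 1$) and recalling the formula $\order(\xi^d) = n/\gcd(n,d)$, which is exactly what makes $e \mid n$ — and hence the vanishing of the numerator outside the degenerate case — automatic.
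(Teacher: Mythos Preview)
Your proof is correct and complete: the case split on whether $e := \order(\xi^d)$ divides $k$, together with either l'H\^opital or the cyclotomic factorization, handles everything, and you were careful with the edge cases $\zeta = 1$ and $k \geq 1$. The paper itself does not give a proof of this lemma, calling it ``a standard lemma that should not need a proof,'' so there is nothing to compare against; your argument is exactly the kind of elementary verification the authors had in mind.
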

We will use \cref{thm:q-Lucas} and \cref{lem:qFraction} in later sections.

\begin{lemma}\label{lem:cspSimple}
Let $\xi$ be a primitive $n^\thsup$ root of unity,
and suppose that $f \in \setN[q]$ is such that $f(\xi^j) \in \setZ$ for all $j \in \setZ$.
Then for all $j \in \setZ$, $f(\xi^j) = f(\xi^{\gcd(j,n)})$.
\end{lemma}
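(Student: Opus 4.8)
The statement to prove is \cref{lem:cspSimple}: if $\xi$ is a primitive $n^\thsup$ root of unity and $f \in \setN[q]$ satisfies $f(\xi^j) \in \setZ$ for all $j \in \setZ$, then $f(\xi^j) = f(\xi^{\gcd(j,n)})$ for all $j \in \setZ$.

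The plan is to exploit the fact that $\xi^j$ and $\xi^{\gcd(j,n)}$ generate the same cyclic subgroup of the $n^\thsup$ roots of unity, hence they are \emph{Galois conjugate} as algebraic numbers. Write $e := \gcd(j,n)$ and $m := n/e$, so that $\xi^e$ is a primitive $m^\thsup$ root of unity and $\xi^j = (\xi^e)^{j/e}$ with $\gcd(j/e, m) = 1$. First I would invoke the standard fact that the Galois group of $\mathbb{Q}(\xi^e)/\mathbb{Q}$ acts transitively on the primitive $m^\thsup$ roots of unity; concretely, there is a field automorphism $\sigma$ of $\mathbb{Q}(\xi^e)$ (indeed of $\mathbb{Q}(\xi)$) sending $\xi^e \mapsto \xi^j$, because $j/e$ is a unit mod $m$. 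Since $f$ has integer (in particular rational) coefficients, $\sigma$ commutes with evaluation of $f$: $\sigma(f(\xi^e)) = f(\sigma(\xi^e)) = f(\xi^j)$. But by hypothesis $f(\xi^e) \in \setZ$, so it is fixed by every element of the Galois group, giving $\sigma(f(\xi^e)) = f(\xi^e)$. Combining the two equalities yields $f(\xi^j) = f(\xi^e) = f(\xi^{\gcd(j,n)})$, as desired.

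Alternatively, if one prefers to avoid explicit Galois theory, I would argue via the cyclotomic polynomial $\Phi_m(q)$: both $\xi^e$ and $\xi^j$ are roots of $\Phi_m$, which is irreducible over $\mathbb{Q}$. Thus $\Phi_m(q)$ divides $f(q) - c$ in $\mathbb{Q}[q]$ whenever $f(\xi^e) = c$ with $c \in \setZ$ — because $f(q) - c$ has $\xi^e$ as a root and $\Phi_m$ is the minimal polynomial of $\xi^e$. Since $\xi^j$ is another root of $\Phi_m$, it is also a root of $f(q) - c$, i.e.\ $f(\xi^j) = c = f(\xi^e)$. This is really the same argument repackaged, and it is the version I would write down since it needs only irreducibility of cyclotomic polynomials, a fact already implicitly used via \cref{thm:q-Lucas}.

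The only mild subtlety — and the place where the hypothesis $f(\xi^i) \in \setZ$ for \emph{all} $i$ is genuinely needed rather than just $f(\xi^{\gcd(j,n)}) \in \setZ$ — is that in the $\mathbb{Q}$-linear-algebra argument I want $f(\xi^e)$ to be rational so that $f(q) - f(\xi^e)$ is a polynomial over $\mathbb{Q}$; this is immediate from the hypothesis applied at the single point $i = \gcd(j,n)$, so in fact the full hypothesis is not strictly necessary for the conclusion as stated, but it is harmless and matches how the lemma will be applied (to CSP-polynomials, where integrality at all roots of unity is automatic). I expect no real obstacle here: the lemma is, as the authors note, standard, and the proof is two or three lines once the irreducibility of $\Phi_m$ is cited.
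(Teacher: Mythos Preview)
Your proof is correct, and the cyclotomic/Galois argument you give is the standard one; but it is not the route the paper takes. The paper instead invokes an external structural lemma \cite[Lem.~2.2]{AlexanderssonAmini2018}: any $f$ with $f(\xi^j)\in\setZ$ for all $j$ is, modulo $q^n-1$, a linear combination of the polynomials $h_d(q)=[n]_q/[d]_q$ for $d\mid n$, and then checks directly (via \cref{lem:qFraction}) that each $h_d$ satisfies $h_d(\xi^j)=h_d(\xi^{\gcd(j,n)})$.

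Your approach is more self-contained, relying only on the irreducibility of $\Phi_m$ rather than an outside reference, and as you note it actually proves a slightly sharper statement: integrality of $f$ at the single point $\xi^{\gcd(j,n)}$ suffices. The paper's approach, by contrast, genuinely uses the full hypothesis to obtain the $h_d$-decomposition, but that decomposition is itself a useful structural fact about CSP polynomials (it identifies an explicit basis for the space of such polynomials modulo $q^n-1$), which your argument does not yield.
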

\begin{proof}
In \cite[Lem.~2.2]{AlexanderssonAmini2018},
it is proved that $f$ (up to mod $q^n-1$) is a linear combination of
\[
 h_d(q) \coloneqq \sum_{i=0}^{n/d-1} q^{di} = \frac{[n]_q}{[d]_q} \qquad \text{ where } d \mid n.
\]
It then suffices to verify that
\[
h_d(\xi^j) = h_d( \xi^{\gcd(j,n)})
=
\begin{cases}
\frac{n}{d} &\text{ if } \frac{n}{\gcd(j,n)} \mid d, \\
0 &\text{ otherwise}
\end{cases}
\]
for all $d \mid n$, $j\in \setZ$, which is straightforward by using \cref{lem:qFraction}.
\end{proof}
Hence, if we know that $f(\xi^d) \in \setZ$ for all $d \in \setZ$,
it suffices to verify \eqref{eq:cspDef} for all $d \mid n$.
There is a related result about computing the number of fixed points.
\begin{lemma}\label{lem:cspSimple2}
	Suppose that $C_n = \langle g \rangle$ acts on the set $X$. If $d \in \setZ$, then
	\[|\{ x\in X : g^d \cdot x = x \}| = |\{ x\in X : g^{\gcd(n,d)} \cdot x = x \}|\]
\end{lemma}
\begin{proof}
Note that all elements of $C_n$ with order $o$ generate the same subgroup $S \subseteq C_n$. If $h, h'$ are both of order $o$, then $\langle h \rangle = \langle h' \rangle = S$, and $h \cdot x = x$ implies that $h' \cdot x = h^e \cdot x = x$, for some $e \in \setZ$.
\end{proof}
\Cref{lem:cspSimple} and \Cref{lem:cspSimple2} are useful facts and are used implicitly in many papers.
We shall use them without further mention throughout the paper.

\subsection{Catalan and Narayana numbers}

The \defin{Catalan numbers} $\Cat{n} \coloneqq \frac{1}{n+1}\binom{2n}{n}$ are indexed by natural numbers.
These numbers occur frequently in combinatorics, see \oeis{A000108} in the OEIS,
and give the cardinalities of many families of combinatorial objects.
For the purpose of this paper, we note that the following sets all have cardinality $\Cat{n}$.
\begin{itemize}
\item $\DYCK(n)$: the set of Dyck paths of size $n$, that is, the subset of paths in
$\PATHS(n)$ which never touch the line $y=x-1$,
\item $\SYT(n^2)$: the set of standard Young tableaux with two rows of length $n$,
\item $\NCP(n)$: the set of non-crossing partitions on $n$ vertices,
\item $\NCM(n)$: the set of non-crossing matchings on $2n$ vertices,
\item $\TRI(n)$: the set of triangulations of an $(n+2)$-gon,
\item $\NCC(n)$: the set of non-crossing $(1,2)$-configurations on $n-1$ vertices.
\end{itemize}
Examples of such objects are listed in \cref{sec:catalanObjectsAppendix}.

Throughout this paper, we use \defin{MacMahon's $q$-analog} of the Catalan numbers.
For any natural number $n$, the $n^\thsup$ $q$-Catalan number is defined by
\begin{align}\label{eq:qCatalanDef}
\mdefin{\Cat[q]{n}} &\coloneqq \frac{1}{[n+1]_q} \qbinom{2n}{n} =
\qbinom{2n}{n}-q\qbinom{2n}{n-1} \\
&=\sum_{P \in \DYCK(n)}q^{\maj(P)} = \sum_{T \in \SYT(n^2)}q^{\maj(T)-n}.
\end{align}
A definition of $\maj$ on standard Young tableaux can be found in the next section.
The \defin{Narayana numbers}
$\mdefin{\Nar{n}{k}} \coloneqq \frac{1}{n}\binom{n}{k}\binom{n}{k-1}$, indexed by two natural numbers $n$ and $k$ such that $1 \leq k \leq n$, are also well-known and have many applications, see the OEIS entry \oeis{A001263}.
The Narayana numbers \defin{refine} the Catalan numbers in the sense
that $\sum_k \Nar{n}{k} = \Cat{n}$.
For our purposes, it suffices to know that the following sets all have cardinality $\Nar{n}{k}$.
\begin{itemize}
\item $\mdefin{\DYCK(n, k)}$: the set of paths in $\DYCK(n)$ with exactly $k$ peaks,
\item $\mdefin{\SYT(n^2, k)}$: the set of tableaux in $\SYT(n^2)$ with exactly $k$ descents,
\item $\mdefin{\NCP(n, k)}$: the set of partitions in $\NCP(n)$ with exactly $k$ blocks,
\item $\mdefin{\NCC(n, k)}$: the set of non-crossing $(1,2)$-configurations in $\NCC(n)$
such that the numbers of proper edges plus the number of loops is equal to $k-1$,
\item $\mdefin{\NCM(n, k-1)}$: the set of non-crossing matchings in $\NCM(n)$
with $k-1$ even edges.
\end{itemize}
The \defin{$q$-Narayana numbers} are defined as the $q$-analog
\begin{align*}
\mdefin{\Nar[q]{n}{k}} \coloneqq \frac{q^{k(k-1)}}{[n]_q}\qbinom{n}{k}\qbinom{n}{k-1} &=
\sum_{P \in \DYCK(n,k)} q^{\maj(P)}\\ &= \sum_{T \in \SYT(n^2,k)}q^{\maj(T)-n}.
\end{align*}
The $q$-Narayana numbers refine the $q$-Catalan numbers, that is,
$\sum_k \Nar[q]{n}{k}$ is equal to $\Cat[q]{n}$.
We also mention that there is a bijection $\NCPtoDYCK$
from $\NCP(n,k)$ to $\DYCK(n,k)$ described in \cref{bij:NCPtoDyck}.
Thus,
\begin{equation}\label{eq:narayanaFromNCP}
 \Nar[q]{n}{k} = \sum_{\pi \in \NCP(n,k)} q^{\maj(\NCPtoDYCK(\pi))}.
\end{equation}
For more background, see \cite{Simion1994} and \cite{ZhaoZhong2011}.

\subsection{Type \texorpdfstring{$B$}{B} Catalan numbers}\label{sec:PrelTypeB}

We shall now describe the type $B$ analogs of the combinatorial objects we saw in the previous section.
The \defin{type $B$ Catalan numbers} $\mdefin{\CatB{n}}$ are defined as
\begin{equation}\label{eq:typeBCatalanDef}
 \mdefin{\CatB{n}} \coloneqq \binom{2n}{n} = \sum_{k=0}^n \binom{n}{k} \binom{n}{k}.
\end{equation}
The \defin{type $B$ Narayana numbers} $\mdefin{\NarB{n}{k}}$ are defined as
\begin{equation}
\mdefin{\NarB{n}{k}} \coloneqq \binom{n}{k}^2.
\end{equation}
The type $B$ Narayana numbers clearly refine the $B$ Catalan numbers, as can be seen from \eqref{eq:typeBCatalanDef}.
Among other things, they count the number of elements in $\PATHS(n)$ with $k$ valleys.
For a more comprehensive list, see \oeis{A008459} in the OEIS, and also the reference \cite{Armstrong2009} for more background.
The \defin{$q$-analogs of the type $B$ Catalan numbers} and the \defin{type $B$ $q$-Narayana numbers} are defined as
\begin{equation}\label{eq:typeBCatAndNar}
 \mdefin{\CatB[q]{n}} \coloneqq \qbinom{2n}{n},
 \qquad \mdefin{ \NarB[q]{n}{k} } \coloneqq q^{k^2} \qbinom{n}{k} \qbinom{n}{k}.
\end{equation}
It is straightforward to verify that $\CatB[q]{n} = \sum_{k=0}^n \NarB[q]{n}{k}$.
Moreover, one can show that
\begin{align}\label{eq:B Narayana formula}
  \sum_{k=0}^n t^k \NarB[q]{n}{k} &= \sum_{T \in \SYT((2n,n)/(n))} t^{|\DES(T)|}q^{\maj(T)} \\
  &= \sum_{P \in \PATHS(n)} t^{\valleys(P)} q^{\maj(P)},
\end{align}
see \cite{Sulanke1998,Sulanke2002}.

The following combinatorial interpretation of the type $B$ $q$-Narayana numbers
is mentioned in I.~Macdonald's book~\cite[p. 400]{Macdonald1995}.
Let $V$ be a $2n$-dimensional vector space over $F_q$,
and let $U$ be an $n$-dimensional subspace of $V$.
Then $\NarB[q]{n}{k}$ is the number of $n$-dimensional subspaces $U'$ of $V$
such that $\dim(U\cap U')=n-k$.

\subsection{Overview of the CSP on Catalan and Narayana objects}\label{sec:catalanObjects}

\cref{Table:typeA} and \cref{Table:typeB} list the state-of-the-art of the CSP on
Catalan-type objects of type $A$ and $B$ respectively,
including the results proven in the present paper.
Examples of such objects can be found in \cref{sec:catalanObjectsAppendix}.
We use several bijections (described in \cref{sec:bijections})
between Catalan and Narayana objects, see \cref{fig:Catalan zoo}.

\begin{table}[!ht]
 \centering
\begin{tabular}{p{0.6\textwidth}p{0.3\textwidth}}
\toprule
\textbf{Type $A$ set \& reference} & \textbf{Group \& polynomial} \\
\midrule
Triangulations of an $n$-gon &  Rotation $\rot_{n}$ \\
\cite[Thm.~7.1]{ReinerStantonWhite2004} & $\Cat[q]{n-2}$ \\
Triangulations of an $n$-gon with $k$ ears & Rotation $\rot_{n}$ \\
\cref{thm:refinedTriCSP} &  Complicated \\
\midrule
\midrule
Two-row standard Young tableaux & Promotion $\prom_{2n}$ \\
\cite[Thm.~1.3]{Rhoades2010} & $\Cat[q]{n}$ \\
Non-crossing matchings & Rotation $\rot_{2n}$ \\
See \cite[Thm.~5]{Heitsch2007} and \cite[Thm.~8.3]{Rhoades2010}. & $\Cat[q]{n}$ \\
%\cite{PetersenPylyavskyyRhoades2008} & $\Cat[q]{n}$ \\
%
%Non-crossing partitions & Rotation $\rot_n$ \\
% \cite{ReinerStantonWhite2004} & $\Cat[q]{n}$ \\
Non-crossing partitions & Kreweras compl., $\krew_{2n}$ \\
See \cite[Thm.~1]{Heitsch2007}. & $\Cat[q]{n}$ \\
\midrule
Non-crossing matchings with $k$ short edges & Rotation $\rot_{2n}$ \\
\cref{thm:PMrefinedCSP} & $\frac{q^{k(k-2)}(1+q^n)}{[n+1]_q} \qbinom{n+1}{k} \qbinom{n-2}{k-2}$ \\
Two-row SYT with $k$ cyclic descents & Promotion $\prom_{2n}$ \\
\cref{thm:PMrefinedCSP} & $\frac{q^{k(k-2)}(1+q^n)}{[n+1]_q} \qbinom{n+1}{k} \qbinom{n-2}{k-2}$ \\

\midrule
Non-crossing partitions with $k$ parts & Rotation $\rot_n$ \\
\cite[Thm. 7.2]{ReinerStantonWhite2004} & $\Nar[q]{n}{k}$ \\

Non-crossing matchings with $k$ even edges & Rotation $\rot_n$ \\
\cref{prop:NCMNarayanaCSP} & $\Nar[q]{n}{k}$ \\
\midrule
\midrule
Non-cross.~(1,2)-config.  &  Rotation $\rot_n$ \\
\cite{Thiel2017} & $\Cat[q]{n+1}$ \\
Non-cross.~(1,2)-config.~with $l$ loops and $e$ edges & Rotation $\rot_n$ \\ \cref{thm:thielRefinement} &
 $  \frac{q^{e(e+1)+(n+1)l}}{[e+1]_q}\qbinom{n}{e,e,l,n-2e-l}$ \\
Non-cross.~(1,2)-config.~with $k$ edges or loops & Rotation $\rot_n$  \\ \cref{cor:thielRefinementNarayana} & $\Nar[q]{n+1}{k}$ \\
Two-column SSYT, $\SSYT(2^k,n)$ & $k$-promotion $\kprom_n$ \\
\cref{thm:ssytNarayanaCSP} &  $\Nar[q]{n+1}{k+1}$. \\
 \midrule
Non-cross.~(1,2)-config. &  Twisted rotation $\twist_{2n}$  \\
\cref{thm:newCatalanCSP} & $\qbinom{2n}{n} - q^2\qbinom{2n}{n-2} $ \\
\bottomrule
\end{tabular}
\caption{
The current state-of-the-art regarding cyclic sieving on type $A$ Catalan objects,
including the new results presented in this article.
}\label{Table:typeA}
\end{table}
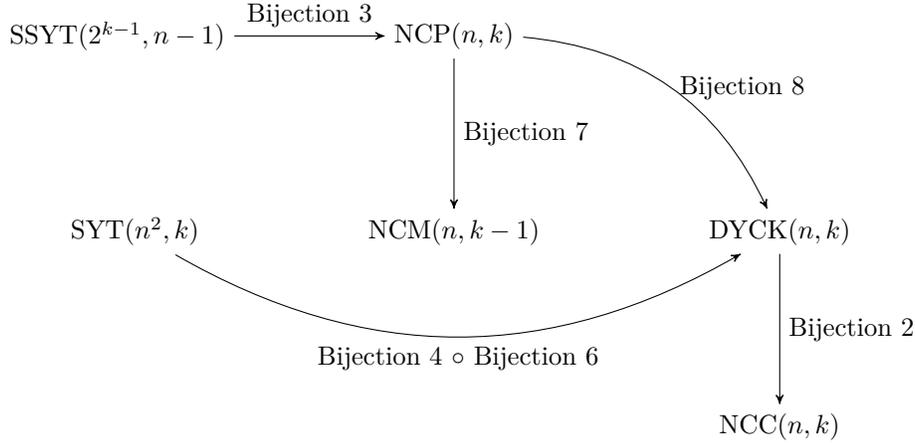
\begin{figure}[ht!]
	\begin{tikzpicture}[
	thin arrow/.style = {
		->, -stealth',
	},
 	node distance=2cm and 2cm,
	]
	[align=center,node distance=4cm]
	\node (syt) {$\SYT(n^2,k)$};
	\node[right = of syt] (ncm) {$\NCM(n,k-1)$};
	\node[right = of ncm] (dyck) {$\DYCK(n,k)$};
	\node[above = of ncm] (ncp) {$\NCP(n,k)$};
	\node[below = of dyck] (ncc) {$\NCC(n,k)$};
	\node[left = of ncp] (ssyt) {$\SSYT(2^{k-1},n-1)$};

	%\draw[thin arrow] (syt) edge[dashed] node[midway,above] {\cref{bij:SYTtoNCM}} (ncm);
	%\draw[thin arrow] (ncm) edge[dashed] node[midway,above] {\cref{bij:NCMtoDyck}} (dyck);
	\draw[thin arrow] (ncp) edge node[midway,right] {\cref{bij:NCPtoNCM}} (ncm);
	\draw[thin arrow] (ncp) edge[bend left] node[midway,right] {\cref{bij:NCPtoDyck}} (dyck);
	\draw[thin arrow] (dyck) -- (ncc) node[midway,right] {\cref{def:laserBij}};
	\draw[thin arrow] (ssyt) -- (ncp) node[midway,above] {\cref{bij:SSYTtoNCP}};
	\draw[thin arrow] (syt) edge[bend right] node[midway,below] {\cref{bij:NCMtoDyck}  $\circ$ \cref{bij:SYTtoNCM}} (dyck);
	\end{tikzpicture}
	\caption{Schematic overview of the Narayana zoo. Note that we have bijections from $\SYT(n^2) \rightarrow \NCM(n)$ (\cref{bij:SYTtoNCM}) and $\NCM(n) \rightarrow \DYCK(n)$ (\cref{bij:NCMtoDyck}), but they do not respect the particular Narayana refinements.}
	\label{fig:Catalan zoo}
\end{figure}

The bijections in \cref{fig:Catalan zoo} respect a Narayana refinement
and so, for example, $\SYT(n^2, k)$, $\NCM(n,k-1)$ and $\NCP(n,k)$ are all equinumerous.
Furthermore, composing the natural bijections \cref{bij:SYTtoNCM}
and the inverse of \cref{bij:NCPtoNCM}, we get that
promotion on SYT corresponds to rotation on non-crossing matchings.

However, promotion on standard Young tableaux does not preserve the number of descents,
but rotation preserves the number of even edges of matchings.
It follows that the cyclic sieving phenomenon on non-crossing
matchings with a specified number of even edges
does not correspond to one on $\SYT(n^2)$ with
a fixed number of descents with promotion as the action.
In general, \emph{a specific Narayana refinement might be incompatible with
a cyclic group action}. By \cref{bij:SYTtoNCM}, here the compatible statistic
one should use for $\SYT(n^2)$ is the number of even entries in the first row.

A note on the general philosophy of the paper.
Having many different sets of objects and well-behaved bijections between
these sets turns out to be a very fruitful approach to proving instances of the CSP.
In this context, well-behaved often times means that the bijection is equivariant.
If a group action looks complicated on a certain set, it can perhaps be made easier if
one first applies an equivariant bijection and then studies the image.
For example, promotion on $\SYT(n^2)$ is complicated while rotation on $\NCM(n)$ is easier.
There is a type of converse of the above. If one has two different CSP triples
with identical CSP-polynomials and whose cyclic groups have the same order, then
there exists an equivariant bijection between these two sets
(by sending orbits to orbits of the same size).

\begin{table}[!ht]
 \centering
\begin{tabular}{p{0.6\textwidth}p{0.3\textwidth}}
\toprule
\textbf{Type $B$ set \& reference} & \textbf{Group \& polynomial} \\
\midrule
Binary words $\BW(2n,n)$  &  Cyclic shift $\shift_{2n}$  \\
\cite[Prop.~4.4]{ReinerStantonWhite2004} & $ \qbinom{2n}{n} $ \\
Skew two-row SYT, $\SYT((2n,n)/(n))$  & Promotion $\prom_{2n}$ \\
\cite[Section 3.1]{StrikerWilliams2012} & $ \qbinom{2n}{n} $ \\
Type $B$ root poset order ideals $\mathrm{OI}_B(n)$ & Rowmotion, $\rowmotion_{2n}$ \\
\cite[Thm.~1.5]{ArmstrongStumpThomas2013} and \cite{StrikerWilliams2012} & $\qbinom{2n}{n} $  \\
Type $B$ non-crossing partitions $\NCPB(n)$  &  Rotation $\rot_{2n}$ \\
\cite[Thm.~1.5]{ArmstrongStumpThomas2013} & $\qbinom{2n}{n} $ \\
Marked $(1,2)$-configs. &  Twisted rotation $\twist^2_{2n}$  \\
\cref{thm:typeB-NCC-Twist} & $\qbinom{2n}{n} $ \\
\midrule
Binary words $\BW(2n,n)$ with $k$ cyclic descents & Cyclic shift $\shift_{2n}$ \\
\cref{prop:typeBCdesCSP}, \cite[Thm.~1.5]{AhlbachSwanson2018} & $q^{k(k-1)}(1+q^n)\qbinom{n}{k}\qbinom{n-1}{k-1}$ \\
\midrule
Marked $(1,2)$-configs.~with $e$ edges and $l$ loops & Rotation $\rot_{n}$  \\
\cref{thm:typeB-NCC-rot-CSP} & Complicated \\
Marked $(1,2)$-configs.~with $k$ edges and loops & Rotation $\rot_{n}$  \\
\cref{cor:typeB-NCC-rot} & Complicated \\
Marked $(1,2)$-configs. &  Rotation $\rot_{n}$  \\
\cref{cor:typeB-NCC-rot} & Complicated \\
%$\qbinom{n}{k} \sum_{e=0}^k \frac{1+[e]_q}{1+q [e]_q} \qbinom{k}{e}  \qbinom{n-k}{e}$ \\
\midrule
Type $B$ NCP with $2k$ or $2k+1$ blocks & Rotation $\rot_{n}$ \\
\eqref{eq:TypeB-NCM-NarayanaBlock} & $ q^{k^2}\qbinom{n}{k}[2]$ \\
% \midrule
% Type $B$ non-crossing partitions with $2k-1$ or $2k$ even edges & Rotation $\rot_{n}$ \\
% \cref{cor:TypeB-NCM-NarayanaEven} & $ q^{k^2}\qbinom{n}{k}^2$ \\
\midrule
Type $B$ triangulations on $2n+2$ vertices & Rotation $\rot_{n+1}$ \\
See \cite[Thm.~4.1]{EuFu2008}. & $\qbinom{2n}{n}$ \\
\bottomrule
\end{tabular}
\caption{
The state-of-the-art regarding cyclic sieving on type $B$ Catalan objects,
including the new results presented in this article.}\label{Table:typeB}
\end{table}

\section{Type \texorpdfstring{$A/B$}{A/B}-Narayana numbers and a quest for a \texorpdfstring{$q$}{q}-analog}\label{sec:typeABNarayanaQuest}

We shall now discuss a natural interpolation between type $A$ and type $B$ Catalan numbers.
The following observation illustrates this interpolation.
For any $s \geq 0$, the sets below are equinumerous:
\begin{enumerate}
 \item the set of skew standard Young tableaux $\SYT((n+s,n)/(s))$,
 \item the set of lattice paths, $\PATHS_{s}(n)$,
 \item the set of order ideals in the type $B$ root poset with at most $s$ elements on the top diagonal.
\end{enumerate}
Note that for $s=0$, we recover sets of cardinality $\Cat{n}$,
and for $s=n$, we recover sets of cardinality $\CatB{n}$.
Bijective arguments are given below in \cref{prop:SYTPathBij} and \cref{prop:rootIdealsSkewSYTBij}.

Let $T\in \SYT(\lambda/\mu)$ where the diagram of $\lambda/\mu$ has $n$ boxes.
A \defin{descent} of $T$ is an integer $j \in \{1,\dotsc,n-1\}$
such that $j+1$ appears in a row below $j$.
The \defin{major index} of $T$ is the sum of the descents.
The major-index generating function for skew standard Young tableaux is defined as
\begin{equation}\label{eq:qanalogSYT}
\mdefin{f^{\lambda/\mu}(q)} \coloneqq \sum_{T \in \SYT(\lambda/\mu)} q^{\maj(T)}
\end{equation}
when $\lambda/\mu$ is a skew shape.
Our motivation for studying this polynomial is \cite[Thm.~46]{AlexanderssonPfannererRubeyUhlin2020x} which
states that for any skew shape $\lambda/\mu$ where each row contains a multiple of $m$ boxes,
there must exist some cyclic group action $C_m$ of order $m$ such that
\begin{equation}\label{eq:stretchedSYTCSP}
 \left( \SYT(\lambda/\mu), C_m,
  f^{\lambda/\mu}(q) \right)
\end{equation}
is a CSP triple. We do not know how such a group action looks like except in the case $m=2$.
In that case one can use \defin{evacuation}, defined by Sch\"utzenberger \cite{Schutzenberger1963}.

\subsection{Skew standard Young tableaux with two rows}

We now describe a bijection between skew SYT with two rows and certain lattice paths.
\begin{proposition}\label{prop:SYTPathBij}
Given $s \in \{0,\dotsc,n\}$, there is a bijection
\[
 \SYT((n+s,n)/(s)) \longrightarrow \PATHS_s(n)
\]
which sends descents in the tableau to peaks in the path.
\end{proposition}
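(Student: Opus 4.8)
The plan is to use the ``reading word'' bijection. Given $T\in\SYT((n+s,n)/(s))$, define a lattice path $\phi(T)\in\PATHS(n)$ by declaring its $i$-th step (for $i=1,\dots,2n$) to be a north step if the entry $i$ lies in the first row of $T$ (the row occupying columns $s+1,\dots,n+s$) and an east step if $i$ lies in the second row (columns $1,\dots,n$). Since both rows of the skew shape contain exactly $n$ boxes, $\phi(T)$ has $n$ north and $n$ east steps, so it is a genuine element of $\PATHS(n)$. The descent-to-peak claim is then immediate: $j$ is a descent of $T$, i.e. $j$ sits in the first row and $j+1$ in the second, precisely when step $j$ of $\phi(T)$ is north and step $j+1$ is east, i.e. precisely when $j$ marks a peak of $\phi(T)$. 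So it remains to show that $\phi$ is a bijection onto $\PATHS_s(n)$.

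First I would check $\dep(\phi(T))\le s$. Fix a prefix $\{1,\dots,j\}$ and let $c$ be the number of second-row entries that are $\le j$; since entries increase along each row, these occupy exactly columns $1,\dots,c$ of the second row. Column-strictness in the two-box columns $s+1,\dots,n$ (in each such column the first-row entry lies strictly below, hence is strictly smaller than, the second-row entry, wait --- above it and strictly smaller) forces at least $\max(0,c-s)$ first-row entries to be $\le j$ as well. Hence within this prefix $(\#\text{east steps})-(\#\text{north steps})\le c-(c-s)=s$, and taking the maximum over all prefixes gives $\dep(\phi(T))\le s$, so $\phi(T)\in\PATHS_s(n)$.

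For surjectivity and injectivity I would exhibit the inverse $\psi$. Given $P\in\PATHS_s(n)$, let the positions of the north steps be the entries of the first row of $\psi(P)$ and the positions of the east steps be the entries of the second row, in each case listed left to right in increasing order. Rows then increase by construction, so the only thing to verify is column-strictness in the two-box columns $d\in\{s+1,\dots,n\}$: there $\psi(P)$ places the $(d-s)$-th smallest north position above the $d$-th smallest east position, and one must check the former is smaller. If it were not, then reading $P$ up to and including its $d$-th east step we would have seen $d$ east steps but at most $d-s-1$ north steps, so $(\#\text{east})-(\#\text{north})\ge s+1$ at that point, contradicting $\dep(P)\le s$. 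Thus $\psi(P)$ is a valid skew SYT, $\psi$ and $\phi$ are mutually inverse, and the proof is complete.

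The main point (and only real content) is the equivalence ``$\phi(T)\in\PATHS_s(n)$ $\iff$ $T$ is column-strict'', i.e. the two short prefix-counting inequalities above; everything else --- well-definedness of $\phi$, the descent--peak correspondence, and that $\psi$ inverts $\phi$ --- is routine bookkeeping. (The extreme cases behave as expected: for $s=0$ there are $n$ two-box columns and one recovers the classical $\SYT(n^2)\leftrightarrow\DYCK(n)$ bijection, while for $s=n$ the two rows share no column, all column constraints are vacuous, and both sides have size $\binom{2n}{n}$.)
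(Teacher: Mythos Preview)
Your proof is correct and uses exactly the same bijection as the paper (entries in the top row become north steps, entries in the bottom row become east steps); the paper simply asserts that this ``natural generalization of the standard bijection works'' without spelling out the depth and column-strictness verifications that you carry out. Your prefix-counting arguments showing $\dep(\phi(T))\le s$ and that $\psi(P)$ is column-strict are the right details to fill in, and they are done correctly (modulo the mid-sentence self-correction about ``below\dots wait --- above'', which you should clean up).
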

\begin{proof}
A natural generalization of the standard bijection works:
an $i$ in the upper or lower row corresponds to the $i^\thsup$
step in the path being north or east, respectively.
Evidently, a descent in the tableau is sent to a peak in the path.
\end{proof}
Recall that for a SYT $T$ the statistic $\maj(T)$ is the sum of the position of
the descents, which is then sent to $\pmaj(P)$ which is the sum of the
positions of the peaks in the corresponding path $P$.

Let
\begin{align*}
\mdefin{X_{n,s}(q)} \coloneqq \sum_{P \in \PATHS_{s}(n)} q^{\pmaj(P)}
\qquad
\text{ and }
\qquad
\mdefin{Y_{n,s}(q)} \coloneqq \sum_{P \in \PATHS_{s}(n)} q^{\maj(P)}.
\end{align*}
By \cref{prop:SYTPathBij} we also have $X_{n,s}(q) = f^{(n+s,n)/(s)}(q)$.

\begin{proposition}[{\cite[Thm.~7]{Krattenthaler1989}}]\label{prop:qIdentsForSkewSYT}
For $n\geq 1$ and $n\ge s\geq 0$,
\begin{equation}\label{eq:shiftedPaths}
X_{n,s}(q) = \qbinom{2n}{n} - \qbinom{2n}{n-s-1}
\;
\text{ and }
\;
Y_{n,s}(q) = \qbinom{2n}{n} - q^{s+1} \qbinom{2n}{n-s-1}.
\end{equation}
In particular,
\[
q^{-n}X_{n,0}(q) = Y_{n,0}(q) = \Cat[q]{n} \text{ and }
X_{n,n}(q) = Y_{n,n}(q) = \CatB[q]{n}.
\]
\end{proposition}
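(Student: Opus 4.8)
The plan is to prove the two stated formulas for $X_{n,s}(q)$ and $Y_{n,s}(q)$ directly by a reflection-type argument applied to the lattice paths, in the spirit of the classical Lindström–Gessel–Viennot / cycle-lemma proofs of the ordinary Catalan identity, but carried through at the level of the $q$-weighted generating function. I will focus on $Y_{n,s}(q) = \sum_{P \in \PATHS_s(n)} q^{\maj(P)}$, where $\maj(P)$ is the sum of positions of valleys (equivalently, the major index of the associated binary word in $\BW(2n,n)$); the formula for $X_{n,s}$ will then follow either by the same method applied to $\pmaj$, or by the symmetry exchanging north and east steps, which swaps peaks and valleys and reverses/complements the word.

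First I would recall that $\sum_{P \in \PATHS(n)} q^{\maj(P)} = \qbinom{2n}{n}$ (this is the $\maj$-interpretation of the $q$-binomial coefficient recorded in the definition above, via $\PATHS(n) \leftrightarrow \BW(2n,n)$). So it suffices to show that the paths in $\PATHS(n) \setminus \PATHS_s(n)$ — those of depth at least $s+1$, i.e. those touching the line $y = x - (s+1)$ — carry total $q^{\maj}$-weight equal to $q^{s+1}\qbinom{2n}{n-s-1}$. The standard device is the reflection map: given a path $P$ of depth $\ge s+1$, locate the first lattice point where it touches $y = x-(s+1)$ and reflect the initial segment in that line. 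This produces a path from $(s+1, -(s+1))$ to $(n,n)$, equivalently (after translating) a lattice path with $n-s-1$ north steps and $n+s+1$ east steps, and such paths are counted with $q^{\maj}$-weight by $\qbinom{2n}{n-s-1}$. The only real work is bookkeeping: I must check that the reflection shifts the $\maj$-statistic by exactly $s+1$. This is where the argument is delicate, because $\maj$ is not simply additive over the concatenation point; one has to track how the positions of valleys in the reflected initial segment relate to the positions of valleys/peaks (north-east vs. east-north patterns get swapped under reflection) in the original, plus the boundary step at the touching point. The cleanest way to organize this is to work entirely with binary words: reflection corresponds to complementing a prefix, and one computes the change in $\maj$ under prefix-complementation using the identity $\maj(w) = \maj(w^c) + (\text{something depending on the boundary})$ — I expect the net shift to telescope to $s+1$ after accounting for the forced step at the first touch of the line.

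The main obstacle, then, is the precise statistic-tracking in the reflection bijection — verifying the clean shift by $q^{s+1}$ rather than by some position-dependent power. An alternative route, which may be cleaner to write up, is to avoid reflection entirely and instead prove the identity $Y_{n,s}(q) = Y_{n,s-1}(q) - (\text{correction})$ by a recursion on $s$, or to derive it from the $q$-Vandermonde identity (Theorem \ref{thm:q-vandermonde}) by decomposing paths according to their last touch of the line $y = x - s$; summing the resulting $q$-binomial products and telescoping should collapse to the two-term expression. Either way, once $Y_{n,s}(q)$ is established, the specializations $Y_{n,0}(q) = \qbinom{2n}{n} - q\qbinom{2n}{n-1} = \Cat[q]{n}$ and $Y_{n,n}(q) = \qbinom{2n}{n} = \CatB[q]{n}$ (using $\qbinom{2n}{-1} = 0$) are immediate, and $q^{-n}X_{n,0}(q) = \Cat[q]{n}$ follows since for a Dyck path the peak positions and valley positions differ by a global shift of $n$. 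Since the reference \cite[Thm.~7]{Krattenthaler1989} is cited, I would also remark that the cleanest complete proof is the one there; here I only need the statement, so I would present the reflection sketch as the conceptual explanation and defer the detailed verification to the citation.
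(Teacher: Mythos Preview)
The paper gives no proof of this proposition; it is stated with a reference to \cite[Thm.~7]{Krattenthaler1989} and then used. So there is no proof in the paper to compare against, and your final plan---present a conceptual sketch and defer the details to the citation---is precisely what the authors do.

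That said, your primary reflection strategy has a genuine gap, and your instinct that the bookkeeping is ``delicate'' understates it: prefix-complementation at the first touch of $y = x - (s+1)$ does \emph{not} shift $\maj$ by a constant. Already for $n = 2$, $s = 0$, the four bad words $0110$, $1001$, $1010$, $1100$ (with $\maj$ equal to $3,1,4,2$) map under this reflection to $1000$, $0001$, $0010$, $0100$ (with $\maj$ equal to $1,0,3,2$), so the individual shifts are $-2,-1,-1,0$. The generating-function identity still holds in total, but this bijection does not witness the factor $q^{s+1}$ pathwise, and no simple telescoping repairs it. A $\maj$-compatible bijection does exist and is the content of Krattenthaler's paper, but it is not the Andr\'e reflection. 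Of the alternatives you list, a recursion in $s$ or a $q$-Vandermonde decomposition is the sound self-contained route; the reflection sketch as written does not close.
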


\subsection{Root lattices in type \texorpdfstring{$A/B$}{A/B}}

The following illustrate the root ideals of $B_n$ where $n=3$.
There are in total $\binom{2\cdot 3}{3}=20$ such ideals.
A root ideal is simply a lower set in the root poset---marked as shaded boxes in the diagrams below.
Root ideals are also called \defin{non-nesting partitions of type $W$}, where $W$
is the Weyl group of some root system.

\begin{align*}
\ytableausetup{boxsize=0.9em}
\ytableaushort{{*(lightgray)},{*(lightgray)}{*(lightgray)},{*(lightgray)}{*(lightgray)}{*(lightgray)},{*(lightgray)}{*(lightgray)},{*(lightgray)}} \;
\ytableaushort{{*(white)},{*(lightgray)}{*(lightgray)},{*(lightgray)}{*(lightgray)}{*(lightgray)},{*(lightgray)}{*(lightgray)},{*(lightgray)}} \;
\ytableaushort{{*(white)},{*(white)}{*(lightgray)},{*(lightgray)}{*(lightgray)}{*(lightgray)},{*(lightgray)}{*(lightgray)},{*(lightgray)}} \;
\ytableaushort{{*(white)},{*(white)}{*(lightgray)},{*(white)}{*(lightgray)}{*(lightgray)},{*(lightgray)}{*(lightgray)},{*(lightgray)}} \;
\ytableaushort{{*(white)},{*(white)}{*(lightgray)},{*(white)}{*(lightgray)}{*(lightgray)},{*(white)}{*(lightgray)},{*(lightgray)}} \;
\ytableaushort{{*(white)},{*(white)}{*(lightgray)},{*(white)}{*(lightgray)}{*(lightgray)},{*(white)}{*(lightgray)},{*(white)}} \;
\ytableaushort{{*(white)},{*(white)}{*(white)},{*(lightgray)}{*(lightgray)}{*(lightgray)},{*(lightgray)}{*(lightgray)},{*(lightgray)}} \;
\ytableaushort{{*(white)},{*(white)}{*(white)},{*(white)}{*(lightgray)}{*(lightgray)},{*(lightgray)}{*(lightgray)},{*(lightgray)}} \;
\ytableaushort{{*(white)},{*(white)}{*(white)},{*(white)}{*(lightgray)}{*(lightgray)},{*(white)}{*(lightgray)},{*(lightgray)}} \;
\ytableaushort{{*(white)},{*(white)}{*(white)},{*(white)}{*(lightgray)}{*(lightgray)},{*(white)}{*(lightgray)},{*(white)}} \\
\ytableaushort{{*(white)},{*(white)}{*(white)},{*(white)}{*(white)}{*(lightgray)},{*(lightgray)}{*(lightgray)},{*(lightgray)}} \;
\ytableaushort{{*(white)},{*(white)}{*(white)},{*(white)}{*(white)}{*(lightgray)},{*(white)}{*(lightgray)},{*(lightgray)}} \;
\ytableaushort{{*(white)},{*(white)}{*(white)},{*(white)}{*(white)}{*(lightgray)},{*(white)}{*(lightgray)},{*(white)}} \;
\ytableaushort{{*(white)},{*(white)}{*(white)},{*(white)}{*(white)}{*(lightgray)},{*(white)}{*(white)},{*(lightgray)}} \;
\ytableaushort{{*(white)},{*(white)}{*(white)},{*(white)}{*(white)}{*(lightgray)},{*(white)}{*(white)},{*(white)}} \;
\ytableaushort{{*(white)},{*(white)}{*(white)},{*(white)}{*(white)}{*(white)},{*(lightgray)}{*(lightgray)},{*(lightgray)}} \;
\ytableaushort{{*(white)},{*(white)}{*(white)},{*(white)}{*(white)}{*(white)},{*(white)}{*(lightgray)},{*(lightgray)}} \;
\ytableaushort{{*(white)},{*(white)}{*(white)},{*(white)}{*(white)}{*(white)},{*(white)}{*(lightgray)},{*(white)}} \;
\ytableaushort{{*(white)},{*(white)}{*(white)},{*(white)}{*(white)}{*(white)},{*(white)}{*(white)},{*(lightgray)}} \;
\ytableaushort{{*(white)},{*(white)}{*(white)},{*(white)}{*(white)}{*(white)},{*(white)}{*(white)},{*(white)}}
\end{align*}

An explicit bijection from the set of skew standard Young tableaux $\SYT((n+s,n)/(s))$
to the root ideals of $B_n$ with at most $s$ elements on the top diagonal is described below.
First, let $\mdefin{\mathrm{OI}(n,s)}$ be the set of root ideals with at most $s$ elements on the top diagonal.

\begin{bijection}\label{bij:PathToRootIdeal}
Let $a_1,a_2,\dotsc,a_n$ be the top row of the skew tableau.
We identify this top row using the bijection in \cref{prop:SYTPathBij} with a path
$\alpha\in \PATHS_{s}(n)$ and get that $\dep(\alpha)=\max_i\{a_i-2i+1\}$.
Let $j$ be the smallest value for which the maximum is obtained, so $\dep(\alpha)=a_j-2j+1$.
We then define the map $\phi$ as changing the step $a_j-1$,
just before reaching maximal depth for the first time, from an east step to a north step.
That is, $\phi(\alpha)=a_1,\dotsc,a_{j-1},a_j-1,a_j,\dotsc,a_n$.
This new path ends at $(n-1,n+1)$ and has depth one less than $\alpha$.
We repeat $\dep(\alpha)$ times and get $\phi^{\dep(\alpha)}(\alpha)$ which ends in
$(n-\dep(\alpha),n+\dep(\alpha))$ and has depth zero.
This path always starts with a north step, and the
boxes below and to the right of it make up a root ideal $o$ in $B_n$ with $\dep(\alpha)$ elements in the top diagonal.
Since $\dep(\alpha)\le s$ this gives $o\in \mathrm{OI}(n,s)$ and the desired map.
See \cref{F:root ideals} for an example.
The inverse $\phi^{-1}$ is easily obtained as follows. Given a root ideal $o$,
let $\beta(o)$ be the north-east path along its boundary, starting with an extra north step.
Now, change the north step of $\beta(o)$ after the last time the path has
reached maximum depth to an east step.
The inverse of the bijection is obtained by iterating $\phi^{-1}$ until the path ends in $(n,n)$.
The map $\phi$ has been used many times before, see e.g.~\cite{AlexanderssonLinussonPotka2019}.
\end{bijection}

\begin{figure}[!ht]
\centering
\[
	\ytableausetup{boxsize=1.27em}
	\ytableaushort{\none \none 2578{10},13469}
	\qquad
	\longrightarrow
	\qquad
	\begin{tikzpicture}[x=1.3em,y=1.3em,baseline=(current bounding box.center)]
	\put(0,-24){
	\draw (0,0) [->,thick,blue] (0,0)--(1,0)--(1,1)--(2,1)--(3,1)--(3,2)--(4,2)--(4,3)--(4,4)--(5,4)--(5,5);
	\put(43,17){$\alpha$};
	\draw (0,0) [->,thick,red] (0,0)--(1,0)--(1,1)--(2,1)--(2,2)--(2,3)--(3,3)--(3,4)--(3,5)--(4,5)--(4,6);
	\put(29,30){$\phi(\alpha)$};
	\draw (0,0) [->,thick,black] (0,0)--(0,1)--(0,2)--(1,2)--(1,3)--(1,4)--(2,4)--(2,5)--(2,6)--(3,6)--(3,7);
	\put(-1,63){$\phi^2(\alpha)$};
	};
	\end{tikzpicture}
	\qquad
	\longrightarrow
	\qquad
	\ytableausetup{boxsize=1.27em}
	\ytableaushort{{*(white)},{*(white)}{*(white)},{*(white)}{*(white)}{*(white)},{*(white)}{*(white)}{*(white)}{*(lightgray)},{*(white)}{*(white)}{*(lightgray)}{*(lightgray)}{*(lightgray)},{*(white)}{*(white)}{*(lightgray)}{*(lightgray)},{*(white)}{*(lightgray)}{*(lightgray)},{*(white)}{*(lightgray)},{*(lightgray)}} \;
	\]
\caption{An example of the bijection: $\phi^2(2,5,7,8,10)=\phi(2,4,5,7,8,10)=1,2,4,5,7,8,10$.}
\label{F:root ideals}
\end{figure}
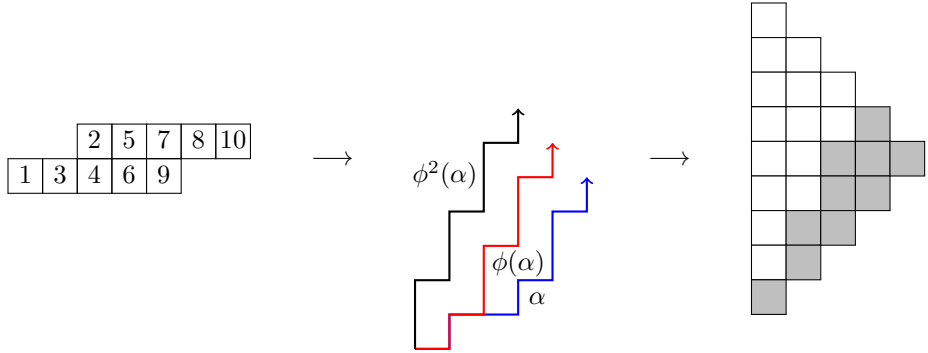

We naturally define $\mdefin{\maj(o)}\coloneqq \maj(\beta(o))$ and
$\mdefin{\pmaj(o)}\coloneqq \pmaj(\beta(o))$ for $o \in \mathrm{OI}(n,s)$.

\begin{proposition}\label{prop:rootIdealsSkewSYTBij}
The map in \cref{bij:PathToRootIdeal} is a bijection so
\[
 |\mathrm{OI}(n,s)| = \binom{2n}{n} - \binom{2n}{n-1-s}.
\]
Furthermore,
\[\sum_{o\in \mathrm{OI}(n,s)} q^{\pmaj(o)}= \qbinom{2n}{n} - \qbinom{2n}{n-s-1}\] and
\[\sum_{o\in \mathrm{OI}(n,s)} q^{\maj(o)}= \qbinom{2n}{n} - q\qbinom{2n}{n-s-1}+\sum_{d=1}^{s}(1-q)\qbinom{2n}{n-d}.\]
\end{proposition}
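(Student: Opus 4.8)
The plan is to use the bijection $\phi$ already constructed above, which identifies $\mathrm{OI}(n,s)$ with $\PATHS_s(n)$ via $o \mapsto \beta(o)$ followed by iterating $\phi^{-1}$; or, more conveniently, the reverse direction, which sends a path $\alpha \in \PATHS_s(n)$ to the root ideal obtained by applying $\phi$ exactly $\dep(\alpha)$ times. Since $\maj(o)$ and $\pmaj(o)$ are \emph{defined} as $\maj(\beta(o))$ and $\pmaj(\beta(o))$, and $\beta(o)$ is the path obtained from $\alpha$ by applying $\phi$ a total of $\dep(\alpha)$ times, the two generating functions in the statement are
\[
\sum_{o \in \mathrm{OI}(n,s)} q^{\pmaj(o)} = \sum_{\alpha \in \PATHS_s(n)} q^{\pmaj(\phi^{\dep(\alpha)}(\alpha))}
\quad\text{and}\quad
\sum_{o \in \mathrm{OI}(n,s)} q^{\maj(o)} = \sum_{\alpha \in \PATHS_s(n)} q^{\maj(\phi^{\dep(\alpha)}(\alpha))}.
\]
The cardinality statement is then immediate from \cref{prop:rootIdealsSkewSYTBij}'s companion \cref{prop:qIdentsForSkewSYT}: $|\mathrm{OI}(n,s)| = |\PATHS_s(n)| = X_{n,s}(1) = \binom{2n}{n} - \binom{2n}{n-1-s}$.

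The key step is to track how the statistics $\maj$ and $\pmaj$ change under a single application of $\phi$. Recall $\phi$ changes one east step (the step labeled $a_j - 1$, i.e.\ the step taken just before the path first attains its maximal depth) into a north step. First I would observe that \emph{peaks are essentially preserved}: changing that one east step to a north step does not create or destroy any peak at a position counted by $\pmaj$, because the modified step sits at the bottom of a descent-to-depth, where the step before it is also east (the path has not yet turned); one checks the local picture around position $a_j - 1$ carefully. Consequently $\pmaj(\phi(\alpha)) = \pmaj(\alpha)$ — the peak positions are unchanged — and iterating gives $\pmaj(\beta(o)) = \pmaj(\alpha)$. Summing over $\alpha \in \PATHS_s(n)$ then yields $\sum_o q^{\pmaj(o)} = X_{n,s}(q) = \qbinom{2n}{n} - \qbinom{2n}{n-s-1}$ by \cref{prop:qIdentsForSkewSYT}.

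For $\maj$, the situation is different: turning an east step into a north step removes a \emph{valley} (hence a descent) whose position contributes to $\maj$, while possibly shifting the positions of other valleys; careful bookkeeping of the local configuration at position $a_j-1$ shows that each application of $\phi$ decreases $\maj$ by exactly $1$ when $\dep(\alpha) \geq 1$, except we must account for the first application separately because $Y_{n,s}$ already carries a $q^{s+1}$ versus $q^1$ discrepancy against $X_{n,s}$. The cleanest route is to avoid step-by-step tracking and instead argue directly: partition $\PATHS_s(n)$ by depth, $\PATHS_s(n) = \bigsqcup_{d=0}^{s} (\PATHS_d(n) \setminus \PATHS_{d-1}(n))$, and use \cref{prop:qIdentsForSkewSYT} to get $\sum_{\alpha \in \PATHS_d(n)\setminus\PATHS_{d-1}(n)} q^{\maj(\alpha)} = Y_{n,d}(q) - Y_{n,d-1}(q) = q^{d}\qbinom{2n}{n-d} - q^{d+1}\qbinom{2n}{n-d-1}$; meanwhile, I claim $\maj(\phi^d(\alpha)) = \maj(\alpha) - d$ for every $\alpha$ of depth exactly $d$ — this is the one genuinely combinatorial lemma, proved by the local analysis of $\phi$ above — so that $\sum_{o : \dep = 0 \text{ after } d \text{ steps from depth } d} q^{\maj(o)}$ contributes $q^{-d}\bigl(q^{d}\qbinom{2n}{n-d} - q^{d+1}\qbinom{2n}{n-d-1}\bigr) = \qbinom{2n}{n-d} - q\,\qbinom{2n}{n-d-1}$. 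Summing over $d = 0, 1, \dots, s$ and telescoping the $-q\qbinom{2n}{n-d-1}$ terms against the $+\qbinom{2n}{n-d}$ terms gives exactly
\[
\sum_{o \in \mathrm{OI}(n,s)} q^{\maj(o)} = \qbinom{2n}{n} - q\qbinom{2n}{n-s-1} + \sum_{d=1}^{s}(1-q)\qbinom{2n}{n-d}.
\]
The main obstacle is the claimed identity $\maj(\phi^d(\alpha)) = \maj(\alpha) - d$ for paths of depth $d$: one must verify that each application of $\phi$ lowers $\maj$ by precisely one, which requires understanding exactly which valley disappears and confirming that no other valley positions shift — the subtlety is that the step being modified lies strictly before the first point of maximal depth, so everything to its right is merely translated, and a short case analysis (is the step at $a_j - 2$ north or east? what about $a_j$?) settles it. Everything else is then formal manipulation of $q$-binomials via \cref{prop:qIdentsForSkewSYT} and the already-established bijection.
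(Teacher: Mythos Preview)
Your proposal is correct and follows essentially the same route as the paper: observe that $\phi$ leaves the peak set (hence $\pmaj$) unchanged and decreases $\maj$ by exactly $1$, then stratify $\PATHS_s(n)$ by depth, apply \cref{prop:qIdentsForSkewSYT} to each stratum, multiply by $q^{-d}$, and telescope. The paper's proof is simply a terser version of exactly this argument.
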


\begin{proof} The map is clearly a bijection and the first formula follows.
For the second statement note that the map $\phi$ does not change the peaks and thus not $\pmaj$,
but it changes the position of one valley and decreases $\maj$ by 1.
Thus the $q$-polynomial for $\pmaj$ is identical to $X_{n,s}(q)$ in \cref{prop:qIdentsForSkewSYT}.
The $\maj$-generating polynomial for paths with depth at most $s$ is $Y_{n,s}(q)$ by \cref{prop:qIdentsForSkewSYT}.
Thus, for paths having depth exactly $d$, it is
$q^{d}\qbinom{2n}{n-d} - q^{d+1} \qbinom{2n}{n-d-1}$.
A path with depth $d$ is mapped by $\phi^d$ to a root ideal with
exactly $d$ elements in the top diagonal.
This gives the sum
\[
\sum_{o\in \mathrm{OI}(n,s)} q^{\maj(o)}=\sum_{d=0}^{s}
\left(q^d \qbinom{2n}{n-d} - q^{d+1} \qbinom{2n}{n-d-1}\right)q^{-d},
\]
which simplifies to the formula given.
\end{proof}

\begin{remark}\label{rem:rowmotion}
There is a notion of rowmotion as an action on order ideals.
Unfortunately, this action does not seem to have a nice order when restricted to
$\mathrm{OI}(n,s)$ for general $s$, see \cite{StrikerWilliams2012}. For example, for $n = 3$ and $s = 1$ we have the following orbit of length 4, implying that the action does not have the order we are looking for (which is $n = 3$). \[\ytableausetup{smalltableaux}
\ytableaushort{{*(lightgray)}{*(lightgray)}{*(lightgray)},{*(lightgray)}{*(lightgray)},{*(lightgray)}}\quad \longrightarrow \quad \ytableaushort{{*(white)}{*(white)}{*(white)},{*(white)}{*(white)},{*(white)}}\quad  \longrightarrow
\quad \ytableaushort{{*(white)}{*(white)}{*(lightgray)},{*(white)}{*(lightgray)},{*(lightgray)}}\quad
\longrightarrow \quad \ytableaushort{{*(white)}{*(lightgray)}{*(lightgray)},{*(lightgray)}{*(lightgray)},{*(lightgray)}}\quad  \longrightarrow
\quad \ytableaushort{{*(lightgray)}{*(lightgray)}{*(lightgray)},{*(lightgray)}{*(lightgray)},{*(lightgray)}}
\]
\end{remark}

\begin{lemma}\label{lem:valuesAtRootsOfUnity}
Let $\xi$ be a primitive $(2n)^\thsup$ root of unity.
For all integers $n > s \ge 0$ and $d \mid 2n$, we have
\[
Y_{n,s}\left( \xi^d \right) =
\chi(d,2) \binom{d}{\frac{d}{2}} - (-1)^d\chi(n-s-1,2n/d) \binom{d}{\frac{d(n-s-1)}{2n}}.
\]
where $\chi(a,b)$ is equal to $1$ if $b$ divides $a$ and $0$ otherwise.
\end{lemma}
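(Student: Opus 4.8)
The plan is to evaluate the polynomial $Y_{n,s}(q) = \qbinom{2n}{n} - q^{s+1}\qbinom{2n}{n-s-1}$ at a root of unity $\xi^d$, where $\xi$ is a primitive $(2n)^{\text{th}}$ root of unity and $d \mid 2n$, by applying the $q$-Lucas theorem (\cref{thm:q-Lucas}) to each of the two $q$-binomial coefficients separately. Set $e \coloneqq 2n/d$, so that $\xi^d$ is a primitive $e^{\text{th}}$ root of unity. I would write $2n = d \cdot e$, and since $2n$ is divisible by $e$ by construction, the base-$e$ digits of $2n$ are $(2n/e, 0) = (d, 0)$. For $\qbinom{2n}{n}$ I write $n = n_1 e + n_0$ in base $e$; $q$-Lucas gives $\qbinom{2n}{n}[\xi^d] = \binom{d}{n_1}\qbinom{n_0}{n_0}[\xi^d] = \binom{d}{n_1}$ if $e \mid n$ (so $n_0 = 0$), and otherwise the lower $q$-binomial $\qbinom{n_0}{n_0} = 1$ is harmless but we instead need to check the top digit $n_1 = \lfloor n/e \rfloor$ versus... wait—more carefully: $q$-Lucas says the residue is $\binom{d}{n_1}\qbinom{n_0}{n_0}[\xi^d]$, and $\qbinom{n_0}{n_0} = 1$ always, so actually $\qbinom{2n}{n}[\xi^d] = \binom{d}{n_1}$ unconditionally where $n_1 = \lfloor n/e \rfloor$. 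The condition $\chi(d,2)$ arises because: $e \mid 2n$ always, but $e \mid n$ iff $n/e$ is an integer iff (since $2n/e = d$) $d$ is even; when $d$ is odd, $n_1 = \lfloor n/e\rfloor = (d-1)/2$ and $n_0 = e/2$, and then $\binom{d}{n_1} = \binom{d}{(d-1)/2}$, which I must reconcile—actually the claim is that this whole term equals $\chi(d,2)\binom{d}{d/2}$, so when $d$ is odd the top term must vanish, which forces $\qbinom{n_0}{n_0}$... no. I need to recheck: $\qbinom{n_0}{n_0}$ is $1$, not $0$. The resolution must be that when $d$ is odd, $2n/e = d$ is odd so $e \nmid 2n$ in the sense needed—but $e \mid 2n$ is automatic. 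So the vanishing must come from elsewhere.

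Let me reconsider. The correct approach: since $e = 2n/d = \order(\xi^d)$, and we want $\qbinom{2n}{n}[\xi^d]$, apply \cref{lem:qFraction}-style reasoning via the MacMahon form. Actually cleaner: use that $\qbinom{2n}{n} = \prod_{i=1}^{n}\frac{[n+i]_q}{[i]_q}$ and count, via \cref{lem:qFraction}, how the factors of $\Phi_e$ cancel; $\qbinom{2n}{n}[\xi^d] \neq 0$ iff the number of multiples of $e$ among $\{n+1,\dots,2n\}$ equals that among $\{1,\dots,n\}$, i.e. iff $\lfloor 2n/e\rfloor - \lfloor n/e\rfloor = \lfloor n/e \rfloor$, i.e. $d = 2\lfloor n/e\rfloor$, i.e. $d$ even (since $2n/e = d$). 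When $d$ is even this gives $\binom{d}{d/2}$. This is exactly $\chi(d,2)\binom{d}{d/2}$. For the second term $q^{s+1}\qbinom{2n}{n-s-1}$: the prefactor $q^{s+1}$ evaluates to $(\xi^d)^{s+1}$; I need to show this equals $(-1)^d$ on the nose when the binomial is nonzero. The binomial $\qbinom{2n}{n-s-1}[\xi^d]$ is nonzero iff $e \mid (n-s-1)$, giving the factor $\chi(n-s-1, e) = \chi(n-s-1, 2n/d)$ and value $\binom{d}{(n-s-1)/e} = \binom{d}{d(n-s-1)/(2n)}$ by $q$-Lucas ($q$-Lucas is the clean tool here since $e \mid 2n$ and, in the nonzero case, $e \mid (n-s-1)$ so both digits are "aligned"). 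Finally, when $e \mid (n-s-1)$, then $\xi^{d(s+1)} = \xi^{d(n-s-1) + d(2s+2-n)} \cdot$—better: $\xi^{d(s+1)}$; using $e \mid n-s-1$ write $s+1 = n - (n-s-1) \equiv n \pmod e$, so $\xi^{d(s+1)} = \xi^{dn} = \xi^{2n \cdot n/e \cdot \frac{d}{?}}$—since $dn = d n$ and $2n = de$, we get $\xi^{dn} = (\xi^{2n})^{n/e} \cdot \xi^{dn - 2n\lfloor\cdot\rfloor}$; cleanly, $\xi^{dn}$ where $\xi^{2n}=1$: $dn \bmod 2n$. Now $dn = d\cdot n$ and $2n = de$, so $dn/(2n) = d/e \cdot$ no—$dn \bmod 2n$: since $\gcd$... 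Actually $\xi^{dn} = \zeta^{n}$ where $\zeta = \xi^d$ has order $e$; so $\xi^{dn} = \zeta^{n \bmod e}$, and when $d$ even, $e \mid n$ is false in general but $n \bmod e = e/2$ when... hmm, when $d$ is even, $n = (d/2)e$, so $e \mid n$, hence $\xi^{dn} = \zeta^0 = 1 = (-1)^d$. When $d$ odd: I showed the first term vanishes; for the second, nonzero requires $e \mid n-s-1$; then need $\xi^{d(s+1)} = -1 = (-1)^d$. With $d$ odd, $e = 2n/d$ is even, $n \equiv e/2 \pmod e$, and $s+1 \equiv n \pmod e$ gives $\xi^{d(s+1)} = \zeta^{n} = \zeta^{e/2} = -1$.

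So the key steps, in order: (1) set $e = 2n/d = \order(\xi^d)$ and record base-$e$ digits of $2n$, namely $(d,0)$; (2) evaluate $\qbinom{2n}{n}[\xi^d]$ via $q$-Lucas / factor-counting, obtaining $\chi(d,2)\binom{d}{d/2}$, with the case split on parity of $d$ being the crux of why $\chi(d,2)$ appears; (3) evaluate $\qbinom{2n}{n-s-1}[\xi^d]$ via $q$-Lucas, obtaining $\chi(n-s-1,2n/d)\binom{d}{d(n-s-1)/(2n)}$; (4) show $\xi^{d(s+1)} = (-1)^d$ whenever $e \mid (n-s-1)$, by reducing the exponent modulo $2n$ and using $s+1 \equiv n \pmod e$ together with the value of $n \bmod e$ (which is $0$ if $d$ even, $e/2$ if $d$ odd); (5) combine. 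The main obstacle I expect is keeping the two case analyses (parity of $d$; divisibility $e \mid n-s-1$) cleanly separated and verifying the sign/prefactor identity in step (4) in all cases, in particular confirming that when $d$ is odd the first term genuinely vanishes so that the stated formula — whose first summand is then $0$ — is correct, and double-checking the edge behavior when $e \mid (n-s-1)$ forces specific residues. Everything else is a routine application of the already-stated \cref{thm:q-Lucas} and \cref{lem:qFraction}.
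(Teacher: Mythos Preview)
Your approach is correct and matches the paper's, whose proof consists of the single sentence ``The evaluation follows from the $q$-Lucas theorem.'' Your initial confusion about the first term resolves cleanly once you note that the low base-$e$ digit of $2n$ is $0$ (since $2n = d\cdot e$), so $q$-Lucas gives $\qbinom{2n}{n}[\xi^d] = \binom{d}{\lfloor n/e\rfloor}\qbinom{0}{n \bmod e}[\xi^d]$, which vanishes unless $e \mid n$, i.e., unless $d$ is even---no factor-counting detour is needed.
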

\begin{proof}
The evaluation follows from the $q$-Lucas theorem, \cref{thm:q-Lucas}.
\end{proof}
In light of \eqref{eq:stretchedSYTCSP}, it would be of great interest to
explicitly describe a group action $C_n$ so that $X_{n,s}(q)$ or $Y_{n,s}(q)$ is the
corresponding polynomial in a CSP triple (which must exist due to \eqref{eq:stretchedSYTCSP}).
In \cref{sec:s=1} we give an explicit action in the case $s=1$,
which gives a new CSP triple involving the Catalan numbers.

\subsection{Narayana connection}\label{subsec:narayanaDiscussion}

We discuss an open problem regarding
the interpolation between type $A$ and type $B$ $q$-Narayana numbers.
This problem is part of a broader set of questions regarding the interplay
of cyclic sieving and characters in the symmetric group, see \cite{AlexanderssonPfannererRubeyUhlin2020x}.
We argue that the small special case discussed below is interesting in its own right.

Recall that
\[
 \sum_{D \in \DYCK(n)} q^{\maj(D)} t^{\peaks(D)} = \sum_{k=1}^n t^k \Nar[q]{n}{k},
\]
where the sum ranges over Dyck paths of size $n$.
Hence, $\sum_{k=1}^n \Nar[q]{n}{k} = X_{n,0}(q)$.
Note that the set of Dyck paths with $k$ peaks is in bijection with
the set of non-crossing set partitions of $[n]$ with $k$ blocks.

\begin{problem}[Main Narayana problem]\label{prob:narayana}
Refine the expression
\[
Y_{n,s}(q) =\qbinom{2n}{n} - q^{s+1} \qbinom{2n}{n-s-1}
\]
for all $s \geq 0$ in the same way as the \defin{$q$-Narayana numbers}
$\Nar[q]{n}{k}$ refine the case $s=0$.
\end{problem}
This problem is not really interesting
unless we impose some additional requirements.
In \cref{prob:narayana}, we are hoping to find a family of polynomials,
$\mdefin{N(s,n,k;q)} \in \setN[q]$ with some of the following properties:

\noindent
\textbf{Specializes to $\Nar[q]{n}{k}$:}
For $s=0$, we have
 \[
 N(0,n,k;q)  = \Nar[q]{n}{k}.
%  q^{k(k - 1)- n}  \left(\qbinom{n - 1}{k - 1} \qbinom{n + 1}{k} - \qbinom{n}{k - 1} \qbinom{n}{k} \right).
\]

\noindent
\textbf{Refines the $Y_{n,s}(q)$ in \eqref{eq:shiftedPaths}:}
We want that for all $s \geq 0$, we have the identity
 \[
 \sum_{k=1}^n N(s,n,k;q) = \qbinom{2n}{n} - q^{s+1}\qbinom{2n}{n-1-s}.
\]

\noindent
\textbf{Is given by some generalization of the the peak statistic:}
We hope for some statistic $\peaks_s(P)$ such that $\peaks_0(P)$
is the usual number of peaks of a Dyck path and
\begin{equation}\label{eq:peakQuest}
 \sum_{P \in \PATHS_{s}(n)} q^{\maj(P)} t^{\peaks_s(P)} = \sum_{k=1}^n t^k N(s,n,k;q).
\end{equation}
We can alternatively consider some other family of combinatorial
objects mentioned in \eqref{sec:typeABNarayanaQuest},
such as type $B$ root ideals with at most $s$ elements on the top diagonal,
or standard Young tableaux in $\SYT((n+s,n)/(s))$ with some type of generalized descents.

\noindent
\textbf{Refines $\CatB{n}$ at $s=n$:}
For $s=n$, we have a natural candidate
 \begin{equation}\label{eq:typeBNarayanaCandidate}
 N(n,n,k;q) = q^{k(k - 1)} \qbinom{n - 1}{k - 1} \qbinom{n + 1}{k} = [n+1]_q \Nar[q]{n}{k}.
\end{equation}
Note that $N(n,n,k;q)$ is not equal to $\NarB[q]{n}{k}$ that appear in \cref{sec:PrelTypeB}.
The combinatorial interpretation in this case is as follows:
 \[
   \sum_{P \in \PATHS(n)} q^{\maj(P)} t^{\mathrm{modpeaks}(P)} =
   \sum_{k=1}^n t^k
   q^{k(k - 1)} \qbinom{n - 1}{k - 1} \qbinom{n + 1}{k}
 \]
where a \defin{modified peak} is any occurrence of $01$ (north-east) in the path, plus $1$
if the path ends with a north step.

\noindent
\textbf{Palindromicity:}
The Narayana numbers have quite nice properties.
First of all,
\[
 \sum_{k=1}^n t^k \Nar{n}{k}
\]
is a \emph{palindromic} polynomial (in $t$). For example, for $n=5$, this sum is
$t + 10 t^2 + 20 t^3 + 10 t^4 + t^5.$
One would therefore hope that for fixed $s$
the sum $\sum_{k=1}^n t^k N(s,n,k;q)$ is palindromic.
The $s=n$ candidate given by $[n+1]_q \Nar[q]{n}{k}$ is also palindromic.

\noindent
\textbf{Palindromicity II:}
Each $N(s,n,k;q)$ is a palindromic polynomial (in $q$).
This is true for $\Nar[q]{n}{k}$ and the expression in \eqref{eq:typeBNarayanaCandidate}.

\noindent
\textbf{Gamma-positivity:}
The sum $\sum_{k=1}^n t^k N(s,n,k;0)$ is $\gamma(t)$-positive
(see the survey \cite{Athanasiadis2018} for the definition).
The corresponding statement seems to hold for the expression in \eqref{eq:typeBNarayanaCandidate}.
One might hope that the general expression $\sum_{k=1}^n t^k N(s,n,k;0)$
also has $\gamma(t)$-positivity.

%\noindent
%\textbf{Kreweras complement:}
%Palindromicity can be refined to the notion of Kreweras complement.
%Can one find a Kreweras-type complement operation on the objects
%in $\SYT((n+s,n)/(s))$ or $\PATHS_{s}(n)$?

\noindent
\textbf{Values at roots of unity and cyclic sieving:}
We require that $N(s,n,k; \xi)$ is a non-negative integer
whenever $\xi$ is an $n^\thsup$ root of unity.
This resonates well with the palindromicity properties,
and cyclic sieving for \eqref{eq:stretchedSYTCSP}.
Taking \eqref{eq:peakQuest} into account, we would like that for every $k\geq 0$,
\[
 \left( \{ P \in \PATHS_{s}(n) : \peaks_s(P)=k \},  \langle \beta_n \rangle , N(s,n,k; q) \right)
\]
is a CSP triple for some action $\beta_n$ of order $n$.
Note that such a refinement is known in the case $s=0$,
as shown in the table below. Note also that there cannot be
a cyclic group action of order $2n$ that fits together
with $N(0,n,k; q)$ in a CSP triple: for example, at $n = 4, k = 2$ this is not an integer at a primitive $2n^\thsup$ root of unity.

\begin{table}[!ht]
 \centering
\begin{tabular}{llll}
\toprule
Set & Group action & $q$-statistic & peak-statistic \\
\midrule
Dyck paths & --- & $\maj$ & $\peaks$  \\
Non-crossing partitions\textsuperscript{\textdagger} & Rotation & $\maj$ &$\blocks$ \\
\midrule
$\SYT(n^2)$ & $\prom^2$ & $\maj$ & ---  \\
Non-crossing matchings\textsuperscript{\textdagger} & Rotation & $\maj$ & --- \\
\bottomrule
\end{tabular}
\caption{
We only have the full Narayana refinement picture
for the non-crossing partition family.
That is, there is a ``peak''-statistic and a group action of order $n$ preserving the peak-statistic.
Note that promotion on Dyck paths does not preserve the number of peaks.
\textsuperscript{\textdagger}For these sets $\maj$ is computed via a bijection to paths.
}
\end{table}

\begin{example} For $n=2, s=1$, we have that $Y_{2,1}(q)=q^4+q^3+q^2+q+1$.
We want to refine this into two polynomials corresponding to $k=1,2$.
The criteria to have non-negative evaluations at roots at unity, here $-1$,
tells us that $q^3$ and $q$ must be together with at least one other term each.
By palindromicity II there are five possibilities
for $N(1,2,2;q)$: $q^4+q^3+q+1$, $q^4+q^3+q^2+q$, $q^4+q^3+q^2$, $q^4+q^3$ and $q^4$.
\end{example}
% \Svante{Kan de andra kriterierna särkilja mellan dessa fem möjlighter?}
% \Svante{Det första palindromisitetsvillkoret verkar svårt att uppfylla. I det här fallet är det 5 som ska delas i två lika delar för $k=1,2$.Det funkar bara för att jämna Catalantal är jämna.}
% \Per{Ar detta ett problem aven om man inte insisterar att centrum ar pa mitten? dvs,
% man anser att e.g. (0,0,0,1,3,3,7,3,3,1) ar palindromt.}

\section{Case \texorpdfstring{$s=0$}{s=0} and non-crossing matchings}\label{sec:s=0}

The goal of this section is to prove two Narayana-refinements of cyclic sieving
on non-crossing perfect matchings by considering the number of even edges and short edges.
The second result corresponds to a refinement of the CSP on $\SYT(n^2)$ under promotion,
where we refine the set by the number of cyclic descents.

\subsection{Even edge refinement}\label{sec:evenNCM}

Given a non-crossing perfect matching,
let $\mdefin{\even(M)}$ denote the number of edges $\{i,j\}$
where $i<j$ and $i$ is even. We refer to them as \defin{even edges},
and all non-even edges are called \defin{odd}. Let $\NCM(n,k)$ be the set of $M \in \NCM(n)$ such that $\even(M) = k$.

Note that for parity reasons an edge $\{i,j\}$ must have $i+j$ odd. Thus the set of non-crossing perfect matchings
on $2n$ vertices with $k$ even edges is invariant under rotation by $\rot_n$ since
\begin{itemize}
 \item any odd edge $(i,2n)$ is mapped to the even edge $(2,i+2)$;
 \item any even edge $(j,2n-1)$ is mapped to the odd edge $(1,j+2)$.
\end{itemize}

The first result is essentially just a restatement of  \cite[Thm.~7.2]{ReinerStantonWhite2004}. %using \cref{XXX}.

\begin{proposition}\label{prop:NCMNarayanaCSP}
For $0\leq k \leq n$, the triple
\begin{equation}
\left( \NCM(n,k), \rot_{n}, \Nar[q]{n}{k+1} \right)
\end{equation}
exhibits the cyclic sieving phenomenon.
\end{proposition}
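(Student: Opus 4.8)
The plan is to transport the cyclic sieving phenomenon from non-crossing set partitions to non-crossing perfect matchings via the equivariant bijection recorded in \cref{bij:NCPtoNCM}. Recall from \cite[Thm.~7.2]{ReinerStantonWhite2004} that the triple $(\NCP(n,k+1),\langle\rot_n\rangle,\Nar[q]{n}{k+1})$ exhibits the CSP, where $\rot_n$ denotes rotation of the $n$ cyclically arranged vertices by one step, an action of order $n$. The statement to be proved is, up to this transport, a restatement of that result.

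First I would check that $\NCPtoNCM$ restricts to a bijection $\NCP(n,k+1)\to\NCM(n,k)$; this is exactly the compatibility with the Narayana refinements indicated in \cref{fig:Catalan zoo} (a partition with $k+1$ blocks is sent to a matching with $k$ even edges), so that the two sets are equinumerous with common cardinality $\Nar{n}{k+1}$. I would also note, using \eqref{eq:narayanaFromNCP} together with the bijection $\NCPtoDYCK$, that the $q$-statistic $\maj$ (computed on $\NCM$ via paths) pushes forward correctly, so that $\Nar[q]{n}{k+1}$ is the honest generating polynomial on $\NCM(n,k)$; this however is not strictly needed for the CSP argument, since the polynomial is the same symbol on both sides.

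The key point is the equivariance of \cref{bij:NCPtoNCM}: rotation of $\NCP(n)$ by one vertex corresponds, under the bijection, to rotation of the $2n$ vertices of $\NCM(n)$ by \emph{two} steps, i.e.\ to the action $\rot_n=\rot_{2n}^2$ under which (as observed immediately before the statement) odd edges and even edges are interchanged in the indicated manner. Granting this, an equivariant bijection carries fixed-point counts to fixed-point counts: for every $d$,
\[
|\{ M\in\NCM(n,k) : \rot_n^{d}\cdot M=M\}| = |\{ \pi\in\NCP(n,k+1) : \rot_n^{d}\cdot\pi=\pi\}|,
\]
and since the CSP-polynomial $\Nar[q]{n}{k+1}$ is identical on both sides, the CSP for $(\NCM(n,k),\langle\rot_n\rangle,\Nar[q]{n}{k+1})$ follows at once from \cite[Thm.~7.2]{ReinerStantonWhite2004}. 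The cases $k=n$ (and, more generally, any $k$ with $\NCM(n,k)=\varnothing$) are trivial, as $\Nar[q]{n}{k+1}$ then vanishes identically.

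The only real work is verifying the equivariance of \cref{bij:NCPtoNCM} under the two rotation actions at the stated speeds; once that is in place the proposition is a formal consequence. As an alternative route avoiding the bijection, one could argue directly: by \cref{lem:cspSimple2} it suffices to treat $d\mid n$, count the $\rot_n^{d}$-invariant non-crossing matchings on $2n$ vertices with $k$ even edges by a quotient/symmetry argument, and match this against the evaluation of $\Nar[q]{n}{k+1}$ at a primitive $(n/d)^{\thsup}$ root of unity, obtained from the $q$-Lucas theorem (\cref{thm:q-Lucas}) and \cref{lem:qFraction}. The bijective route is shorter and is the one I would follow; in either approach the main obstacle is pinning down the precise dictionary between the two rotation actions.
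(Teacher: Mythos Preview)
Your proposal is correct and follows essentially the same approach as the paper: transport the CSP from $\NCP(n,k+1)$ to $\NCM(n,k)$ via the bijection $\NCPtoNCM$ and invoke \cite[Thm.~7.2]{ReinerStantonWhite2004}. The paper's proof is in fact terser than yours---it simply asserts that the inverse of $\NCPtoNCM$ takes matchings with $k$ even edges to partitions with $k+1$ blocks and then cites Reiner--Stanton--White---whereas you are more explicit about the equivariance (rotation by one step on $\NCP(n)$ becoming rotation by two steps on the $2n$ vertices of $\NCM(n)$), which is indeed the point one must check.
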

\begin{proof}
Mapping non-crossing matchings to non-crossing partitions via the inverse
of $\NCPtoNCM$ takes matchings with $k$ even edges to partitions with $k+1$ blocks, see \cref{bij:NCPtoNCM}.
This CSP result was proven already in \cite[Thm.~7.2]{ReinerStantonWhite2004}.
\end{proof}

\subsection{Short edge refinement}

\begin{definition}
	We define \defin{promotion} $\prom_{2n}:\SYT(n^2) \to \SYT(n^2)$ as the following composition of bijections:
	\begin{equation*}
		\mdefin{\prom_{2n}} \coloneqq \SYTtoNCM^{-1} \circ \rot_{2n} \circ \ \SYTtoNCM.
	\end{equation*}
	If $T \in \SYT(n^2)$, we use the shorthand $\prom_{2n} T$ to mean $\prom_{2n}(T)$.
\end{definition}
Promotion is originally defined for Young tableaux of all shapes using the so-called
jeu-de-taquin.
The notion has been generalized to
arbitrary posets by R.~Stanley,~see~\cite{Stanley2009}.

\begin{definition}
	Let $T \in \SYT(n^2)$. Define the \defin{cyclic descent set} $\CDES(T)$ as follows.
	We have $\DES(T) =\CDES(T)\cap[1,2n-1]$ and let $2n \in \CDES(T)$ if and
	only if $1 \in \CDES(\prom_{2n} T)$.
	Denote the number of \defin{cyclic descents} of $T$ by $\mdefin{\cdes(T)} \coloneqq |\CDES(T)|$ and denote $\mdefin{\SYT_{\cdes}(n^2,k)}$ the set of $T \in \SYT(n^2)$ such that $\cdes(T)=k$.
\end{definition}
The above definition of cyclic descent set can be generalized in a straightforward
manner to all rectangular standard Young tableaux---that is, tableaux of
shape $\lambda = (a^b)$. In \cite{Huang2020}, an explicit construction is given,
where it is shown that all shapes which are not connected ribbons
admit a type of cyclic descent statistic. It follows that one can define the set $\SYT_{\cdes}(\lambda,k)$ for all such shapes $\lambda$ as well.

The set $\SYT_{\cdes}(n^2,k)$ is in bijection with a certain subset of $\DYCK(n)$ which we shall now describe.
We first recall the standard bijection $\SYTtoDYCK$ between $\SYT(n^2)$ and $\DYCK(n)$:
given a $T \in \SYT(n^2)$, let $\mdefin{\SYTtoDYCK(T)} = \sfw_1 \sfw_2 \dotsb \sfw_{2n}$ be the Dyck
path where $\sfw_i=0$ if $i$ is in the top row and $\sfw_i=1$ otherwise.

Call a Dyck path $\sfw_1 \sfw_2 \sfw_3 \dotsb \sfw_{2n}$
\defin{elevated} if $\sfw_2 \sfw_3 \dotsb \sfw_{2n-1}$ is also a Dyck path.
A Dyck path which is not elevated is called \defin{non-elevated}.
Elevated Dyck paths of size $n$ are
in natural bijection with Dyck paths of size $n-1$. The next lemma now easily follows.
\begin{lemma}\label{lem:elevatedDyck}
	Let $T \in \SYT_{\cdes}(n^2,k)$. Then $2n \in \CDES(T)$ if and only if the Dyck path $\SYTtoDYCK(T)$ is elevated.
\end{lemma}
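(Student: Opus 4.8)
The plan is to unwind the definitions of cyclic descent and of $\SYTtoDYCK$, and to track what the boundary index $1$ and $2n$ correspond to under the identification $T \leftrightarrow \sfw = \SYTtoDYCK(T)$. First I would observe that the word $\sfw = \sfw_1\dotsb\sfw_{2n}$ attached to any $T \in \SYT(n^2)$ always begins with $\sfw_1 = 0$ (the entry $1$ is in the top-left corner) and ends with $\sfw_{2n} = 1$ (the entry $2n$ is in the bottom-right corner), so the only content in the claim is about whether $\sfw_2\dotsb\sfw_{2n-1}$ is again a Dyck path. Since $\sfw$ has $n$ zeros and $n$ ones and never dips below the diagonal, deleting the leading $0$ and trailing $1$ gives a word with $n-1$ zeros and $n-1$ ones; it is a Dyck path precisely when $1$ is never a \emph{descent} of $\sfw$ strictly before the end and $2n$ is never needed, i.e.\ precisely when $\sfw$ visits the line $y = x$ only at the two endpoints. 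Concretely, $\sfw$ is elevated iff $\sfw_i = 1 \Rightarrow$ (partial count of $1$'s among $\sfw_1\dotsb\sfw_i$) $<$ (partial count of $0$'s) for all $i < 2n$; equivalently $\sfw$ never returns to the diagonal in the interior.

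Next I would translate the two halves of the iff into tableau language via the definition of $\CDES$. For the statement "$2n \in \CDES(T)$", by definition this means $1 \in \CDES(\prom_{2n}T)$, i.e.\ $1 \in \DES(\prom_{2n}T)$, i.e.\ the entry $2$ of $\prom_{2n}T$ lies strictly below the entry $1$. Since $\prom_{2n} = \SYTtoNCM^{-1}\circ\rot_{2n}\circ\SYTtoNCM$ and $\rot_{2n}$ on non-crossing matchings is just cyclic rotation of the $2n$ boundary points, I would compute how the edge incident to point $1$ behaves: in $\NCM(n)$, the vertex $1$ is always matched to some vertex $2m$, and the matching restricted to $\{2,\dotsc,2m-1\}$ together with that on $\{2m+1,\dotsc,2n\}$ are the two "halves"; the path $\SYTtoDYCK(T)$ is elevated exactly when $2m = 2n$, i.e.\ when $1$ is matched directly to $2n$ and the whole remaining matching sits on $\{2,\dotsc,2n-1\}$. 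I would then check that after rotating by one step, the new vertex $1$ (old vertex $2$) is matched to old vertex $2n$ — hence to the new vertex $2n-1$, which is the last vertex — precisely when the original edge at $1$ was $\{1,2n\}$; and that "new vertex $1$ matched to the last new vertex" is exactly the condition $1\notin\DES$ of the corresponding tableau, giving $2n\notin\CDES(T)$ when the path is \emph{not} elevated and $2n\in\CDES(T)$ when it is.

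I expect the main obstacle to be bookkeeping the correspondence between "edge $\{1,2n\}$ present in $\SYTtoNCM(T)$" and "$\SYTtoDYCK(T)$ elevated" — i.e.\ verifying that the two standard bijections $\SYTtoNCM$ and $\SYTtoDYCK$ are compatible in the way one expects (reading off whether the first matched pair is the outermost one). This is routine but must be done carefully, since $\SYTtoNCM$ and $\SYTtoDYCK$ are a priori different normalizations; the cleanest route is to recall (or cite the bijection in \cref{bij:SYTtoNCM} and \cref{bij:NCMtoDyck}) that under $\NCMtoDyck$ a non-crossing matching with the edge $\{1,2n\}$ goes exactly to an elevated Dyck path, and that $\NCMtoDyck\circ\SYTtoNCM$ agrees with $\SYTtoDYCK$ up to this outer structure. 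Once that compatibility is pinned down, the equivalence "$2n\in\CDES(T)\iff\SYTtoDYCK(T)$ elevated" drops out immediately from the computation of $\prom_{2n}$ on the single edge at vertex $1$, and the lemma follows. Since the statement is flagged as "easily following" from the discussion of elevated paths, I would keep the write-up to a couple of sentences making exactly these identifications explicit.
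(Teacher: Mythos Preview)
The paper gives no proof of this lemma, saying only that it ``easily follows'' from the preceding definitions. Your overall plan---unwind $\prom_{2n} = \SYTtoNCM^{-1}\circ\rot_{2n}\circ\SYTtoNCM$, translate ``$2n\in\CDES(T)$'' into a statement about the matching $\SYTtoNCM(T)$, and identify ``elevated'' with the presence of the edge $\{1,2n\}$---is exactly the right one, and is presumably what the authors have in mind.

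However, your bookkeeping in the plan is off in two places. First, the rotation convention in the paper (visible from the $\rot_n\circ\flip$ figure, where $\{3,4\}\mapsto\{4,5\}$) sends old vertex $i$ to new vertex $i+1$, so the new vertex $2$ is the old vertex $1$, not the other way round. Second, the condition $1\in\DES(\prom_{2n}T)$ says that $2$ lies in row~$2$ of $\prom_{2n}T$, i.e.\ that the \emph{new vertex $2$} is an end vertex; the partner of the new vertex $1$ is not the relevant quantity, and your biconditional ``new vertex $1$ matched to the last new vertex $\iff 1\notin\DES$'' is false (only one implication holds). The clean argument is: the old edge $\{1,m\}$ becomes the new edge $\{2,\,m+1\bmod 2n\}$; hence new vertex $2$ is an end vertex iff $m+1\equiv 1$, i.e.\ $m=2n$, i.e.\ $1$ is matched to $2n$ in $\SYTtoNCM(T)$, which is exactly the condition that $\SYTtoDYCK(T)=\NCMtoDYCK(\SYTtoNCM(T))$ never returns to the diagonal before step $2n$. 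With these two corrections your plan goes through in one line.
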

It follows from \cref{lem:elevatedDyck} that the restriction of $\SYTtoDYCK$ to $\SYT_{\cdes}(n^2,k)$ is a
bijection to the set of $D \in \DYCK(n)$ such that $D$ either is non-elevated and has $k$ peaks or is
elevated and has $k-1$ peaks.
Hence, we get (using an argument by T.~Do{\v{s}}li{\'{c}}~\cite[Prop.~2.1]{Dosilic2010}),

\begin{align}
|\SYT_{\cdes}(n^2,k)| &= \Nar{n}{k} - \Nar{n-1}{k} + \Nar{n-1}{k-1} \notag \\
			  &= \frac{2}{n+1}\binom{n+1}{k}\binom{n-2}{k-2}. \label{eq:cDesRefinement}
\end{align}
These numbers are a shifted variant of the OEIS entry \oeis{A108838}.
Define the following $q$-analog of these numbers.
For any two natural numbers $n$ and $k$, let
\begin{equation}
	\mdefin{\qSYT[q]{n}{k}} \coloneqq \sum_{T \in \SYT_{\cdes}(n^2,k)} q^{\maj(T)-n} = \sum_D q^{\maj(D)}
\end{equation}
where the second sum is taken over all $D \in \DYCK(n)$ that are either
non-elevated with $k$ peaks or elevated with $k-1$ peaks. For integers $k,n \geq 1$ we claim that
\begin{equation}
	\qSYT[q]{n}{k} = \Nar[q]{n}{k}-q^{k-1} \Nar[q]{n-1}{k}+q^{k-2} \Nar[q]{n-1}{k-1}. \label{eq:g-polynomial}
\end{equation}
To see this, consider the restriction
of $\SYTtoDYCK$ to $\SYT_{\cdes}(n^2,k)$. If $D$ is an elevated Dyck path of
size $n$ with $k$ peaks and $D'$ is the corresponding Dyck path
of size $n-1$, then $\maj(D)-\maj(D')=k-1$, as each of the $k-1$
valleys contribute one less to the major index in $D'$ compared to in $D$.

The polynomials $\qSYT[q]{n}{k}$ refine the $q$-Catalan numbers,
which is easily seen by comparing their definition with \eqref{eq:qCatalanDef}.
\begin{proposition}
For all integers $n$,
\[
	\sum_k \qSYT[q]{n}{k}=\Cat[q]{n}.
\]
\end{proposition}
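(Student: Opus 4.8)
The plan is to read the identity off almost directly from the material already assembled in the excerpt, and I would record two short routes.

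\emph{First route (combinatorial).} I would start from the second description of $\qSYT[q]{n}{k}$, namely $\qSYT[q]{n}{k} = \sum_{D} q^{\maj(D)}$ where $D$ ranges over the Dyck paths in $\DYCK(n)$ that are either non-elevated with $k$ peaks or elevated with $k-1$ peaks. The key point is that ``elevated'' and ``non-elevated'' partition $\DYCK(n)$ into two disjoint classes, so when we sum over all $k$ every $D \in \DYCK(n)$ is picked up in exactly one summand: a non-elevated $D$ with $p$ peaks appears in the $k=p$ term, and an elevated $D$ with $p$ peaks appears in the $k = p+1$ term. Hence
\[
\sum_k \qSYT[q]{n}{k} = \sum_{D \in \DYCK(n)} q^{\maj(D)},
\]
and the right-hand side equals $\Cat[q]{n}$ by \eqref{eq:qCatalanDef}.

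\emph{Second route (algebraic, as a consistency check).} Alternatively I would simply sum the expansion \eqref{eq:g-polynomial} over $k$. The terms $\sum_k \Nar[q]{n}{k}$ give $\Cat[q]{n}$, since the $q$-Narayana numbers refine the $q$-Catalan numbers; and the substitution $k \mapsto k+1$ in the last sum shows $\sum_k q^{k-2}\Nar[q]{n-1}{k-1} = \sum_k q^{k-1}\Nar[q]{n-1}{k}$, so the remaining two sums cancel, leaving $\Cat[q]{n}$.

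I do not expect a genuine obstacle here. The only thing that needs care is that the dichotomy elevated/non-elevated is exhaustive and mutually exclusive on $\DYCK(n)$ and that the peak bookkeeping in the two cases is exactly as stated — but this is precisely what \cref{lem:elevatedDyck} and the discussion establishing \eqref{eq:g-polynomial} already provide, so the argument collapses to essentially one line. I would present the combinatorial route as the main proof and mention the algebraic cancellation as an aside.
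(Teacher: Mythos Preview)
Your proposal is correct and essentially matches the paper's approach: the paper simply says the identity ``is easily seen by comparing their definition with \eqref{eq:qCatalanDef},'' which is exactly your first route (the sets $\SYT_{\cdes}(n^2,k)$, equivalently the Dyck-path classes you describe, partition the full set, so summing over $k$ recovers $\Cat[q]{n}$). Your algebraic telescoping via \eqref{eq:g-polynomial} is a fine extra sanity check, but the paper does not bother with it.
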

It is easy to see that $\qSYT[q]{0}{0}=\qSYT[q]{1}{1}=1$ and $\qSYT[q]{n}{k}=0$ for all other pairs of natural
numbers $n$, $k$ such that either $n \leq 1$ or $k \leq 1$.
For larger $n$ and $k$, we have the following closed form for $\qSYT[q]{n}{k}$.
\begin{lemma}
For all integers $k, n \geq 2$,
\begin{equation}\label{eq:maj-polSYT}
	\qSYT[q]{n}{k} = \frac{q^{k(k-2)}(1+q^n)}{[n+1]_q} \qbinom{n+1}{k} \qbinom{n-2}{k-2}.
\end{equation}
\end{lemma}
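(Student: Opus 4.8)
The plan is to derive the closed form directly from the already-established identity \eqref{eq:g-polynomial},
\[
\qSYT[q]{n}{k} = \Nar[q]{n}{k}-q^{k-1}\Nar[q]{n-1}{k}+q^{k-2}\Nar[q]{n-1}{k-1},
\]
by substituting the product formula for the $q$-Narayana numbers and simplifying. First I would dispose of the trivial range: if $k>n$ then $\qbinom{n-2}{k-2}=0$, so the right-hand side of \eqref{eq:maj-polSYT} vanishes, and the left-hand side vanishes as well since every Dyck path of size $n$ has at most $n$ peaks while every elevated one has at most $n-1$. So from now on assume $2\le k\le n$.

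Substituting $\Nar[q]{m}{j}=\frac{q^{j(j-1)}}{[m]_q}\qbinom{m}{j}\qbinom{m}{j-1}$ into the displayed expression, and using $k(k-1)=k(k-2)+k$ together with $(k-1)(k-2)+(k-2)=k(k-2)$, the three summands all carry the prefactor $q^{k(k-2)}$, leaving
\[
\qSYT[q]{n}{k}=q^{k(k-2)}\left[\frac{q^{k}}{[n]_q}\qbinom{n}{k}\qbinom{n}{k-1}-\frac{q^{2k-1}}{[n-1]_q}\qbinom{n-1}{k}\qbinom{n-1}{k-1}+\frac{1}{[n-1]_q}\qbinom{n-1}{k-1}\qbinom{n-1}{k-2}\right].
\]
Next I would normalize the first summand using the one-step $q$-binomial identities $\frac{1}{[n]_q}\qbinom{n}{k-1}=\frac{1}{[n-k+1]_q}\qbinom{n-1}{k-1}$ and $\frac{1}{[n-k+1]_q}\qbinom{n}{k}=\frac{1}{[n+1]_q}\qbinom{n+1}{k}$. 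Then every summand contains the factor $\qbinom{n-1}{k-1}$, and writing also $\qbinom{n-2}{k-2}=\frac{[k-1]_q}{[n-1]_q}\qbinom{n-1}{k-1}$ on the right-hand side of \eqref{eq:maj-polSYT}, the lemma reduces to
\[
\frac{q^k}{[n+1]_q}\qbinom{n+1}{k}-\frac{q^{2k-1}}{[n-1]_q}\qbinom{n-1}{k}+\frac{1}{[n-1]_q}\qbinom{n-1}{k-2}=\frac{(1+q^n)[k-1]_q}{[n+1]_q[n-1]_q}\qbinom{n+1}{k}.
\]

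Finally, using $\qbinom{n-1}{k}=\frac{[n-k+1]_q[n-k]_q}{[n+1]_q[n]_q}\qbinom{n+1}{k}$ and $\qbinom{n-1}{k-2}=\frac{[k]_q[k-1]_q}{[n+1]_q[n]_q}\qbinom{n+1}{k}$, I would divide through by $\qbinom{n+1}{k}$ (nonzero since $k\le n$) and clear the remaining denominators; what is left is the purely $q$-integer identity
\[
[k]_q[k-1]_q-q^{2k-1}[n-k+1]_q[n-k]_q=\bigl((1+q^n)[k-1]_q-q^k[n-1]_q\bigr)[n]_q,
\]
which follows by multiplying both sides by $(q-1)^2$ and expanding, both sides then becoming $1-q^{k-1}-q^{k}+q^{n+k-1}+q^{n+k}-q^{2n}$. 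The argument is entirely routine; the only point requiring a little care is the bookkeeping of the powers of $q$ when extracting the common factor $q^{k(k-2)}$ and the choice of which $q$-binomial ($\qbinom{n-1}{k-1}$, equivalently $\qbinom{n-2}{k-2}$) to cancel so that the identity collapses to the scalar form above, but there is no genuine obstacle.
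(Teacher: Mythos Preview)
Your proof is correct and follows essentially the same approach as the paper's: both start from \eqref{eq:g-polynomial}, extract the common factor $q^{k(k-2)}$, reduce to the scalar $q$-integer identity $q^k[n]_q[n-1]_q-q^{2k-1}[n-k+1]_q[n-k]_q+[k]_q[k-1]_q=(1+q^n)[n]_q[k-1]_q$ (your displayed identity is just this rearranged), and verify it by clearing $(1-q)^2$ and expanding. The only cosmetic difference is that you peel off the common $q$-binomial factor via the one-step identities for $\qbinom{n}{k-1}$, $\qbinom{n}{k}$, etc., whereas the paper factors out $\frac{1}{[n+1]_q}\qbinom{n+1}{k}\qbinom{n-2}{k-2}$ directly.
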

\begin{proof}
We may restrict ourselves to the case when $n \geq k$ as both
sides of \eqref{eq:maj-polSYT} are identically zero otherwise.
We write $\qSYT[q]{n}{k}$ using the expression in \eqref{eq:g-polynomial}
and expand the $q$-Narayana numbers to obtain
\begin{multline*}
	\frac{q^{k(k-1)}}{[n]_q}\qbinom{n}{k}\qbinom{n}{k-1}
	- q^{k-1}\frac{q^{k(k-1)}}{[n-1]_q}\qbinom{n-1}{k}\qbinom{n-1}{k-1} \\
	+ q^{k-2}\frac{q^{(k-1)(k-2)}}{[n-1]_q}\qbinom{n-1}{k-1}\qbinom{n-1}{k-2}\\[1em]
	= \frac{q^{k(k-2)}}{[n+1]_q}\qbinom{n+1}{k}\qbinom{n-2}{k-2}
	\left( q^k \frac{[n-1]_q}{[k-1]_q}-q^{2k-1}\frac{[n-k+1]_q[n-k]_q}{[n]_q[k-1]_q}+\frac{[k]_q}{[n]_q}\right).
\end{multline*}

The expression in the parentheses is then rewritten as
\[
\frac{q^k[n]_q[n-1]_q-q^{2k-1}[n-k+1]_q[n-k]_q+[k]_q[k-1]_q}{[n]_q[k-1]_q}.
\]
We must now show that this is equal to $1+q^n$ or, equivalently, that the following identity holds:
\begin{equation}\label{eq: q-integer}
	q^k[n]_q[n-1]_q-q^{2k-1}[n-k+1]_q[n-k]_q+[k]_q[k-1]_q=(1+q^n)[n]_q[k-1]_q.
\end{equation}
If $n=k$, the identity is clearly true, so we may assume that $n>k$.
By using $[j]_q = (1-q^j)/(1-q)$ for integers $j \geq 1$, we get the equivalent equation
\begin{multline*}
\frac{q^k(1-q^n)(1-q^{n-1})}{(1-q)^2}  +
\frac{(1-q^{k})(1-q^{k-1})}{(1-q)^2} \\
= \frac{(1+q^n)(1-q^n)(1-q^{k-1})}{(1-q)^2}+
\frac{q^{2k-1}(1-q^{n-k+1})(1-q^{n-k})}{(1-q)^2}.
\end{multline*}
Clearing denominators and expanding the products gives
\begin{multline*}
  \left(q^{2n+k-1} + q^k - q^{n+k-1} - q^{n+k} \right)+
 \left(q^{2k - 1} - q^{k-1} - q^k + 1 \right) = \\
   \left( q^{k+2 n-1}-q^{k-1}-q^{2 n}+1 \right) +
 \left(q^{2 n}-q^{k+n-1}-q^{k+n}+q^{2 k-1}\right)
\end{multline*}
which evidently holds for all integers $n,k$.
\end{proof}
The edge $xy$ in a non-crossing perfect matching is said to be \defin{short} if either $x=i$ and $y=i+1$ for some $i$ or
if $x=1$ and $y=2n$. If $M \in \NCM(n)$, then we denote $\mdefin{\sml}(M)$
its number of short edges and $\mdefin{\NCM_{\sh}(n,k)}$ the set of $M \in \NCM(n)$ such that $\sml(M)=k$.
The set $\SYT_{\cdes}(n^2,k)$ is in a natural bijection with $\NCM_{\sh}(n,k)$.
To see this, we use the standard bijection $\SYTtoNCM$ between $\SYT(n^2)$ and $\NCM(n)$,
see \cref{bij:SYTtoNCM} in  \Cref{sec:NCMandBW}.
\[
{
	\ytableausetup{boxsize=1.2em}
	\ytableaushort{12568,3479{10}} \quad \xrightarrow{\quad \SYTtoNCM \quad} \quad \matching{10}{1/4,2/3,5/{10},6/7,8/9}{}
}
\]

It follows from our definition of promotion that $\SYTtoNCM$
is an equivariant bijection in the sense that
\begin{equation}\label{eq: SYTbij commute}
\SYTtoNCM(\prom_{2n} T) = \rot_{2n} (\SYTtoNCM(T)).
\end{equation}

From the definition of $\SYTtoNCM$ and \eqref{eq: SYTbij commute},
one can prove the following lemma.
\begin{lemma}
Let $T \in \SYT(n^2)$.
Then $x \in \CDES(T)$ if and only if $xy$,
where $x <y $, is a short edge in $\SYTtoNCM(T)$.
\end{lemma}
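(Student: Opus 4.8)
The plan is to unwind the two bijections $\SYTtoNCM$ and $\SYTtoDYCK$ and compare the cyclic descent structure of $T$ with the short-edge structure of its matching. First I would recall the explicit recipe of $\SYTtoNCM$: reading the entries $1,2,\dots,2n$, entry $i$ contributes an opening bracket if $i$ is in the top row and a closing bracket if $i$ is in the bottom row, and then one matches brackets in the usual nested way; equivalently $\SYTtoNCM = \NCMtoDyck^{-1}\circ\SYTtoDYCK$ up to the identification of north/east steps with zeros/ones. Under this identification, a short edge $\{x,x+1\}$ of the matching corresponds exactly to an adjacent pair $\sfw_x\sfw_{x+1} = 01$ in the Dyck word that are matched to each other, i.e. a north step immediately followed by an east step — a peak at position $x$ — \emph{and} such a peak is always a matched adjacent pair. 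So for $x\in[1,2n-1]$, ``$\{x,x+1\}$ is a short edge'' is equivalent to ``$\sfw_x = 0$ and $\sfw_{x+1} = 1$'', which by the definition of $\SYTtoDYCK$ says precisely that $x$ is in the top row of $T$ and $x+1$ is in the bottom row, i.e. $x\in\DES(T) = \CDES(T)\cap[1,2n-1]$.

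Next I would handle the wrap-around case $x = 2n$, where the only candidate short edge is $\{1,2n\}$. By definition $2n\in\CDES(T)$ iff $1\in\CDES(\prom_{2n}T)$, and $1\in\CDES(\prom_{2n}T)$ iff (by the $x\in[1,2n-1]$ case applied to $\prom_{2n}T$, with $x=1$) the pair $\{1,2\}$ is a short edge of $\SYTtoNCM(\prom_{2n}T)$. Using the equivariance \eqref{eq: SYTbij commute}, $\SYTtoNCM(\prom_{2n}T) = \rot_{2n}(\SYTtoNCM(T))$, and under the rotation $\rot_{2n}$ (which sends vertex $i$ to $i+1$ cyclically, vertex $2n$ to vertex $1$) the edge $\{1,2n\}$ of $\SYTtoNCM(T)$ is carried to the edge $\{1,2\}$ of $\rot_{2n}(\SYTtoNCM(T))$. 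Hence $\{1,2\}$ is a short edge of $\SYTtoNCM(\prom_{2n}T)$ iff $\{1,2n\}$ is a short edge of $\SYTtoNCM(T)$, which is exactly what ``$2n\in\CDES(T)$'' should mean. I should double-check the orientation of $\rot_{2n}$ so that a non-wrap short edge genuinely maps to a non-wrap short edge and the edge $\{1,2n\}$ is the unique one that can rotate onto $\{1,2\}$; this is the bookkeeping point where a sign or index-shift error is easiest to make.

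The main obstacle, and the step deserving the most care, is the claim that short edges of a non-crossing perfect matching are \emph{exactly} the adjacent matched pairs, i.e. that in the Dyck word a consecutive $01$ is always matched to itself. This follows because in the nested matching any occurrence of $01$ (north then east) at positions $x,x+1$ must be a matched pair: the $0$ at position $x$ is the innermost unmatched opener at the moment we read the $1$ at position $x+1$, so they pair up; and conversely a matched pair $\{x,y\}$ with $y>x+1$ encloses at least one further vertex, hence is not short. I would state this as a short lemma about Dyck words (or cite the structure of $\NCMtoDyck$) and then the rest is the two-line dictionary above plus the $\prom_{2n}$/$\rot_{2n}$ equivariance for the boundary case. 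Everything else is routine once that dictionary is pinned down.
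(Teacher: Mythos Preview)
Your proposal is correct and is precisely the argument the paper has in mind: the paper does not give a proof at all beyond the sentence ``From the definition of $\SYTtoNCM$ and \eqref{eq: SYTbij commute}, one can prove the following lemma,'' and your write-up supplies exactly those two ingredients---the dictionary between descents and peaks/matched $01$-pairs for $x\in[1,2n-1]$, and the equivariance $\SYTtoNCM\circ\prom_{2n}=\rot_{2n}\circ\SYTtoNCM$ for the wrap-around $x=2n$. Your check that a consecutive $01$ in the Dyck word is always a matched pair is the only non-formal step, and your justification (the closing $1$ at position $x+1$ pairs with the most recently opened $0$, which is at position $x$) is correct.

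One small remark on the bookkeeping you flagged: with the paper's convention the rotation carries vertex $i$ to $i+1$ cyclically, so the preimage of the edge $\{1,2\}$ under $\rot_{2n}$ is indeed $\{2n,1\}$, and the lemma's phrasing ``$xy$ with $x<y$'' should be read cyclically for $x=2n$ (the short edge being $\{1,2n\}$). Your caution about the orientation is well placed---with the opposite rotation the preimage would be $\{2,3\}$ and the argument would collapse---but the paper's conventions make your computation the right one.
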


%Using \cref{lemma:promotion and cdes}, we see that $\SYT(\lambda,k)$ is closed
%under promotion for all rectangular $\lambda$.
%Recall that $(\SYT(\lambda), \langle \prom \rangle, \Cat[q]{n})$ exhibits the
%cyclic sieving phenomenon, for rectangular $\lambda$.
%In the case when $\lambda=(n,n)$, we have the following
%refinement with regards to the number of cyclic descents.

\begin{theorem}\label{thm:PMrefinedCSP}
Let $n, k$ be natural numbers. The triple
\[
(\NCM_{\sh}(n,k),\langle \rot_{2n} \rangle, \qSYT[q]{n}{k})
\]
exhibits the cyclic sieving phenomenon.
\end{theorem}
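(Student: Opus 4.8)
The plan is to prove this CSP instance by reducing it, via the equivariant bijection $\SYTtoNCM$, to a statement about promotion on two-row standard Young tableaux, and then to invoke the known cyclic sieving result of Rhoades (\cite[Thm.~1.3]{Rhoades2010}) together with the combinatorial machinery already assembled in this section. By \eqref{eq: SYTbij commute}, $\SYTtoNCM$ intertwines $\prom_{2n}$ on $\SYT(n^2)$ with $\rot_{2n}$ on $\NCM(n)$, and by the preceding lemma it sends $\SYT_{\cdes}(n^2,k)$ bijectively onto $\NCM_{\sh}(n,k)$. Hence it suffices to show that the triple $(\SYT_{\cdes}(n^2,k),\langle\prom_{2n}\rangle,\qSYT[q]{n}{k})$ exhibits the CSP; the two statements are equivalent under the bijection, and the $q$-statistic $\maj(T)-n$ on $\SYT(n^2)$ matches $\maj(D)$ on the associated Dyck path, which is precisely what $\qSYT[q]{n}{k}$ records.

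First I would establish that $\qSYT[q]{n}{k}(\xi)\in\setZ$ for every root of unity $\xi$ of order dividing $2n$, so that by \cref{lem:cspSimple} it suffices to check \eqref{eq:cspDef} only for $d\mid 2n$. This follows from the closed form \eqref{eq:maj-polSYT}: write $\qSYT[q]{n}{k}=q^{k(k-2)}(1+q^n)\cdot\frac{1}{[n+1]_q}\qbinom{n+1}{k}\qbinom{n-2}{k-2}$, note that $\frac{1}{[n+1]_q}\qbinom{n+1}{k}=\frac{1}{[k]_q}\qbinom{n}{k-1}$ times a $q$-integer ratio that can be reorganised into a genuine polynomial, and apply \cref{thm:q-Lucas} and \cref{lem:qFraction} to each factor at $\xi^d$. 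The cleanest route is probably to use the expression \eqref{eq:g-polynomial}, $\qSYT[q]{n}{k}=\Nar[q]{n}{k}-q^{k-1}\Nar[q]{n-1}{k}+q^{k-2}\Nar[q]{n-1}{k-1}$: each $q$-Narayana polynomial is known to take nonnegative integer values at roots of unity (this underlies \cref{prop:NCMNarayanaCSP} and \cite[Thm.~7.2]{ReinerStantonWhite2004}), and one evaluates the three terms separately using $q$-Lucas, being careful about which roots of unity have order dividing $n$ versus $n-1$.

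Next I would count fixed points. The key observation is that $\prom_{2n}$ preserves $\cdes$ (equivalently $\rot_{2n}$ preserves $\sml$), which is exactly why the partition $\SYT(n^2)=\bigsqcup_k\SYT_{\cdes}(n^2,k)$ is $\prom_{2n}$-stable; this should be extracted from \cref{lem:elevatedDyck} and the preceding lemma, since a short edge $xy$ corresponds to $x\in\CDES(T)$ and rotation permutes cyclic descent positions cyclically. Because the full set $\SYT(n^2)$ with $\prom_{2n}$ and $\Cat[q]{n}$ is already a CSP-triple (Rhoades), and $\sum_k\qSYT[q]{n}{k}=\Cat[q]{n}$, the fixed-point counts automatically add up correctly across $k$; what remains is to distribute them. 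Here I would use \cref{bij:SYTtoNCM} and \cref{bij:NCMtoDyck} to transport everything to non-elevated/elevated Dyck paths, where a fixed point of $\rot_{2n}^d$ is a rotation-symmetric non-crossing matching and the number of peaks (hence $k$) of its image is controlled by the symmetry type. For $d\mid 2n$ a $\rot_{2n}^d$-fixed non-crossing matching is determined by a fundamental domain, and one counts those with a prescribed number of short edges; for the elevated case the outer edge $\{1,2n\}$ forces additional structure. Matching this count term-by-term against the root-of-unity evaluation from the previous step completes the argument.

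The main obstacle I anticipate is the fixed-point enumeration for $\rot_{2n}^d$ with $d$ a proper divisor of $2n$, specifically disentangling the ``non-elevated with $k$ peaks'' versus ``elevated with $k-1$ peaks'' contributions under the symmetry: a rotation-invariant matching can have its distinguished arc $\{1,2n\}$ either present or absent, and the bookkeeping of short edges under a $d$-fold symmetric matching is delicate because short edges near the ``seam'' of the fundamental domain behave differently from short edges in the interior. I expect the evaluation $\qSYT[q]{n}{k}(\xi^d)$ from \eqref{eq:g-polynomial} to naturally split as a main term (from $\Nar[q]{n}{k}$, counting symmetric non-crossing partitions with $k$ blocks, cf.\ \cref{prop:NCMNarayanaCSP}) minus a correction (from the two $\Nar[q]{n-1}{\bullet}$ terms, accounting for the elevated/non-elevated dichotomy at scale $n-1$), and the crux will be exhibiting a size-preserving, $\cdes$-compatible bijection on the fixed-point sets that realises this splitting — essentially an equivariant refinement of Do\v sli\'c's counting argument \cite[Prop.~2.1]{Dosilic2010} behind \eqref{eq:cDesRefinement}.
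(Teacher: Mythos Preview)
Your proposal has a genuine gap in both halves. For the root-of-unity evaluation, the ``cleanest route'' via \eqref{eq:g-polynomial} does not work as stated: $\Nar[q]{n-1}{k}$ and $\Nar[q]{n-1}{k-1}$ have no controlled behaviour at $(2n)^\thsup$ roots of unity, since $n-1$ and $2n$ are coprime for $n>1$. The known integrality/nonnegativity of $q$-Narayana values you cite holds only at roots whose order divides the relevant $n$, so evaluating the three terms separately yields non-integer garbage in general that only conspires to an integer after summation---you cannot read off the answer term by term. The paper instead evaluates the closed form \eqref{eq:maj-polSYT} directly with $q$-Lucas and \cref{lem:qFraction}, obtaining four explicit cases depending on whether $\order(\xi^d)\mid n$, whether $\order(\xi^d)=2$ with $n$ odd, and the residue of $k$.

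For the fixed-point count, the elevated/non-elevated dichotomy on Dyck paths is \emph{not} $\rot_{2n}$-stable: a Dyck path is elevated iff the matching has the edge $\{1,2n\}$, a property destroyed by rotation. So transporting to Dyck paths gives you no leverage on fixed points of $\rot_{2n}^d$, and the ``splitting'' you hope for does not correspond to any invariant decomposition. You acknowledge this is the crux but provide no argument. The paper counts fixed points directly on matchings: for $\order(\xi^d)\mid n$ it uses \cref{bij:BwToNCM} to identify $\rot_{2n}^d$-fixed matchings in $\NCM_{\sh}(n,k)$ with binary words in $\BW^{k_1}(n/\order(\xi^d))$; for $\order(\xi^d)=2$, $n$ odd, it argues via the mandatory diagonal and reduces to counting half-matchings with a prescribed number of short edges (with a correction for the short edge adjacent to the diagonal); and it shows all remaining cases are zero by parity and divisibility obstructions. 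None of this passes through Rhoades or through Dyck paths.
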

\begin{proof} Let $\xi$ be a primitive $(2n)^\thsup$ root of unity.
Write $k=k_1 \order(\xi^d)+k_0$ for the unique natural
numbers $k_1$ and $k_0$ such that $0 \leq k_0 < \order(\xi^d)$. Then, by dividing into cases and applying \cref{thm:q-Lucas} (the $q$-Lucas theorem) twice, we get
\[
\qSYT[\xi^d]{n}{k} = \begin{cases}
	\frac{2}{n+1}\binom{n+1}{k}\binom{n-2}{k-2} & \text{if }d=2n,\\[1em]
	2\binom{n/\order(\xi^d)}{k_1}\binom{n/\order(\xi^d)-1}{k_1-1} & \text{if }\order(\xi^d) \mid n \text{ and } k_0=0,\\[1em]
	\frac{2n}{n+1}\binom{(n+1)/2}{k_1}\binom{(n-3)/2}{k_1-1} & \text{if } \order(\xi^d)=2, \ n \text{ odd and } 2 \mid k,\\[1em]
	0 & \text{otherwise.}
\end{cases}
\]

We prove that these evaluations agree with the number of fixed points
in $\NCM_{\sh}(n,k)$ under $\rot_n^d$ on a case-by-case basis.

\noindent
\textbf{Case $d=2n$}: Trivial.

\noindent
\textbf{Case $\order(\xi^d) \mid n$ and $k_0=0$}: By using \cref{bij:BwToNCM},
we see that such rotationally symmetric perfect matchings are in bijection
with the set $\BW^{k_1}(n/\order(\xi^d))$.
To see that this set has the desired cardinality,
we equate the two expressions in \eqref{eq:maj polynomial bin words}
and \eqref{eq:typeBSecondNarayanaPol} and then take $q=1$.

\noindent
\textbf{Case $\order(\xi^d)=2$, $n$ odd and $2 \mid k$}:
It is easy to check that the assertion holds in the case $n=3$ and $k=2$. It thus remains to show the assertion for $n>3$. Such a non-crossing perfect matching must have a diagonal
(an edge that connects two vertices $i$ and $i+n \pmod{2n}$) that divides
the matching into two halves. The diagonal can be chosen in $n$ ways.
The matching is now determined uniquely by one of its two halves.
To choose one half, we choose a non-crossing matching on $(n-1)/2$
vertices with $k/2$ short edges, not including a potential short edge between the vertices closest to the diagonal.
Such a matching is either i) an element of $\NCM_{\sh}((n-1)/2, k/2)$ which does not have
an edge between the two vertices that lie closest to the diagonal or ii) an
element of $\NCM_{\sh}((n-1)/2, k/2+1)$ which has a short edge between the
two vertices that lie closest to the diagonal.

Let us note that, in general, the fraction of elements in $\NCM_{\sh}(n,k)$ that have a short edge
adjacent to a given side is equal to $k/2n$. This is easily seen by considering
rotations of such a non-crossing perfect matching. Hence, in our case the number of
matchings fixed by $\rot_{2n}^d$ is equal to
\[
n\left(
\frac{n-1-k/2}{n-1} \left|\NCM_{\sh}\left(\frac{n-1}{2},\frac{k}{2}\right)\right|
+
\frac{k/2+1}{n-1}   \left|\NCM_{\sh}\left(\frac{n-1}{2},\frac{k}{2}+1\right)\right|
\right)
\]
Substituting the values from \eqref{eq:cDesRefinement} (recall that $\left|\NCM_{\sh}\left(a,b\right)\right|=\left|\SYT_{\cdes}\left(a^2,b\right)\right|$), it remains to show that this expression is identical to the one given by $\qSYT[-1]{n}{k}$. This can now be verified with a computer algebra system, such as Sage~\cite{Sage}.

\noindent
\textbf{The remaining cases}: We need to show that, in all the remaining cases,
there are no rotationally symmetric non-crossing perfect matchings.
Suppose first that $\order(\xi^d)=2$, $n$ is odd and $2 \nmid k$.
It is clear that such a non-crossing perfect matching must have a diagonal
dividing the matching into two halves. The two halves are identical up to
a rotation of $\pi$ radians and so, in particular, they must have the
same number of short edges. In other words, the number of short edges
must be even, contradicting $2 \nmid k$. Suppose next that $\order(\xi^d) \mid n$
and $k_0 \neq 0$. Such a matching is completely determined by how
the vertices $1,2,\dotsc, d$ are paired up. It follows that the number of
short edges must be a multiple of $2n/d$, contradicting $k_0 \neq 0$.

Suppose lastly that $\order(\xi^d) \mid 2n$ but $\order(\xi^d) \nmid n$.
To analyze this case, we first prove the following. \\

\textbf{Claim:}
If $\order(\xi^d) \mid 2n$ and $\order(\xi^d) \nmid n$, then $d$ is odd.
\begin{proof}[Proof of Claim]
The hypothesis implies that the number of $2$'s in the prime factorization
of $2n$ is equal to the number of $2$'s in the prime factorization of $o(\xi^d)$.
Hence, the number $2n/o(\xi^d)$ is odd. Combining this with
the fact that $o(\xi^d)=2n/\gcd(2n,d)$ yields that $\gcd(2n,d)$ is odd.
But this implies that $d$ is odd, so we are done.
\end{proof}
We now use the claim and note that there cannot be any
non-crossing perfect matchings that are fixed under rotation by an odd
number of steps, except in the case when $\order(\xi^d)=2$
and $n$ is odd, i.e.~when the matching has a diagonal.
This exhausts all possibilities and thus the proof is complete.
\end{proof}

\Cref{thm:PMrefinedCSP} can be stated in an alternative way as follows.
Since $\SYTtoNCM$ maps cyclic descents to short edges, we see that $\SYT_{\cdes}(\lambda,k)$ is closed
under promotion for all rectangular $\lambda$.
Recall that $(\SYT(\lambda), \langle \prom \rangle, \Cat[q]{n})$ exhibits the
cyclic sieving phenomenon, for rectangular $\lambda$.
In the case when $\lambda=(n,n)$, we have the following
refinement with regards to the number of cyclic descents.
\begin{corollary}\label{cor:SYTrefinedCSP}
	Let $n, k$ be natural numbers. The triple
	\[
	(\SYT_{\cdes}(n^2,k),\langle \prom_{2n} \rangle, \qSYT[q]{n}{k})
	\]
	exhibits the cyclic sieving phenomenon.
\end{corollary}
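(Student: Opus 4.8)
The plan is to deduce the corollary directly from \cref{thm:PMrefinedCSP} by transporting the cyclic sieving phenomenon across the equivariant bijection $\SYTtoNCM$, rather than redoing any case analysis at the level of tableaux. First I would record the two structural facts that have already been set up: that $\SYTtoNCM \colon \SYT(n^2) \to \NCM(n)$ is a bijection, and that by the lemma immediately preceding the corollary it sends the cyclic descents of $T$ to the short edges of $\SYTtoNCM(T)$. Together these say that $\SYTtoNCM$ restricts, for every $k$, to a bijection $\SYT_{\cdes}(n^2,k) \to \NCM_{\sh}(n,k)$.

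Next I would invoke \eqref{eq: SYTbij commute}, which asserts precisely that $\SYTtoNCM \circ \prom_{2n} = \rot_{2n} \circ \SYTtoNCM$; in other words the restricted bijection is equivariant with respect to the cyclic groups $\langle \prom_{2n}\rangle$ and $\langle \rot_{2n}\rangle$. Hence for every $d \in \setZ$ the map $\SYTtoNCM$ carries $\{T \in \SYT_{\cdes}(n^2,k) : \prom_{2n}^d T = T\}$ bijectively onto $\{M \in \NCM_{\sh}(n,k) : \rot_{2n}^d M = M\}$, so these two fixed-point counts agree for all $d$.

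Since the CSP-polynomial $\qSYT[q]{n}{k}$ attached to both triples is literally the same object, the identity \eqref{eq:cspDef} for $(\NCM_{\sh}(n,k),\langle \rot_{2n}\rangle,\qSYT[q]{n}{k})$, established in \cref{thm:PMrefinedCSP}, immediately yields \eqref{eq:cspDef} for $(\SYT_{\cdes}(n^2,k),\langle \prom_{2n}\rangle,\qSYT[q]{n}{k})$. There is no genuine obstacle here: the only points needing care are that $\SYTtoNCM$ really does respect the short-edge/cyclic-descent refinement and that it really is equivariant, and both have already been arranged, the latter being essentially forced by the definition $\prom_{2n} \coloneqq \SYTtoNCM^{-1} \circ \rot_{2n} \circ \SYTtoNCM$. (If a self-contained argument were preferred, one could instead re-run the evaluations of $\qSYT[\xi^d]{n}{k}$ and the fixed-point count from the proof of \cref{thm:PMrefinedCSP} after translating every statement about matchings into the language of two-row tableaux and promotion, but this would only duplicate that computation.)
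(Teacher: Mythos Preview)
Your argument is correct and matches the paper's approach: the corollary is presented there simply as a restatement of \cref{thm:PMrefinedCSP}, obtained by transporting along the equivariant bijection $\SYTtoNCM$ (which sends cyclic descents to short edges and intertwines $\prom_{2n}$ with $\rot_{2n}$). There is nothing to add.
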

A related result is alluded to by C. Ahlbach, B. Rhoades and J. Swanson
in the presentation slides \cite{Swanson2018s}.
They claim to have proven a refinement of the cyclic sieving phenomenon on
standard Young tableaux with the group action being promotion in the Catalan case.
It is not clear from the slides if they refine by cyclic descents
or in some other way. Therefore, we cannot tell if their result
is identical to \cref{thm:newCatalanCSP} or not.

It follows from \cite[Lemma 3.3]{Rhoades2010} that the number of cyclic descents remains fixed under promotion
of rectangular standard Young tableaux.
Experiments suggests that \cref{cor:SYTrefinedCSP}
generalizes to all rectangular standard Young tableaux.
This would be a refinement of the famous CSP result on rectangular tableaux, see \cite[Theorem 1.3]{Rhoades2010}.
More precisely, we denote $\mdefin{f^\lambda_k(q)} \coloneqq  \sum_T q^{\maj(T)}$ where
the sum is taken over all standard Young tableaux of shape $\lambda$ with exactly $k$ cyclic descents.
\begin{conjecture}
Let $n,m, k$ be natural numbers and put $\lambda = (n^m)$. The triple
\[
\left(\SYT_{\cdes}(\lambda,k), \langle\prom_{nm} \rangle, q^{-\kappa(\lambda)} f^{\lambda}_k(q) \right)
\]
exhibits the cyclic sieving phenomenon.
Here, $\mdefin{\kappa(\lambda)} \coloneqq \sum_i (i-1)\lambda_i$.
\end{conjecture}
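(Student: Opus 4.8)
The plan is to follow the same two-step strategy that proves \cref{thm:PMrefinedCSP} and \cref{cor:SYTrefinedCSP}: for each $d\mid nm$, evaluate $q^{-\kappa(\lambda)}f^{\lambda}_k(q)$ at $\xi^d$ with $\xi$ a primitive $(nm)^{\thsup}$ root of unity, independently count the tableaux in $\SYT_{\cdes}(\lambda,k)$ fixed by $\prom_{nm}^d$, and check that the two agree. Two preliminary observations make this a legitimate refinement to attack. First, $\cdes$ is constant on promotion orbits of rectangular tableaux (the statement quoted from \cite[Lemma~3.3]{Rhoades2010}, see also \cite{Huang2020}), so $\SYT_{\cdes}(\lambda,k)$ is indeed $\prom_{nm}$-stable, and since these sets partition $\SYT(\lambda)$ one automatically has $\sum_k f^{\lambda}_k(q)=f^{\lambda}(q)$, which by \cite{Rhoades2010} genuinely refines the known CSP-triple $(\SYT(\lambda),\langle\prom_{nm}\rangle,q^{-\kappa(\lambda)}f^{\lambda}(q))$. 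Second, once one checks that $q^{-\kappa(\lambda)}f^{\lambda}_k(\xi^d)\in\setZ$, \cref{lem:cspSimple} and \cref{lem:cspSimple2} reduce the verification of \eqref{eq:cspDef} to the divisors $d$ of $nm$.

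For the polynomial side I would first look for a closed product formula for $f^{\lambda}_k(q)$ extending \eqref{eq:maj-polSYT}. A natural route is a \emph{deletion identity}: encoding $T\in\SYT(n^m)$ by the word $\sfw(T)\in[m]^{nm}$ whose $i^{\thsup}$ letter is the row of $i$, the cyclic descent at position $nm$ should detect a ``primitivity'' of $\sfw(T)$ analogous to a Dyck path being elevated, and one hopes for an identity of the shape
\[
f^{(n^m)}_k(q)=g_k(q)-q^{a}\,\tilde g_k(q)+q^{b}\,\tilde g_{k-1}(q),
\]
where $g_k,\tilde g_k$ are ordinary-descent major-index generating functions over $\SYT$ of $(n^m)$ and of suitable smaller shapes, mirroring the elevated/non-elevated split for $m=2$. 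The ordinary-descent refinements are governed by principal specializations of (skew) Schur functions, which evaluate transparently at roots of unity via \cref{thm:q-Lucas} and \cref{lem:qFraction}; I expect $q^{-\kappa(\lambda)}f^{\lambda}_k(\xi^d)$ to come out as a product of ordinary binomial coefficients vanishing unless suitable divisibility relations among $d$, $n$, $m$ and $k$ hold, generalizing the case distinction in the proof of \cref{thm:PMrefinedCSP}. Pinning down the correct smaller shapes (for $m\ge 3$ they need not be rectangular) and proving this identity is the first genuine piece of work.

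The fixed-point side is where I expect the real obstacle. For the unrefined statement Rhoades' argument is representation-theoretic: the long cycle of $S_{nm}$ acts, up to sign, as promotion on a Kazhdan--Lusztig cellular module, and the fixed-point counts of $\prom_{nm}^d$ are read off from the character of a power of the long cycle. To keep track of $\cdes$ one needs this statistic to be visible in that picture. My plan would be to show that, for $d\mid nm$, the set of $\prom_{nm}^d$-fixed points of $\SYT(n^m)$ is in explicit bijection with standard Young tableaux of a smaller rectangular shape (the appropriately scaled rectangle, when the relevant divisibilities hold), in the spirit of the Catalan and binary-word cases, and then to verify that this bijection is equivariant for cyclic descents, so that it restricts to the $\cdes=k$ pieces. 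For $m=2$ this equivariance was essentially free because promotion \emph{is} rotation on non-crossing matchings and cyclic descents \emph{are} short edges; for $m\ge 3$ there is no comparably explicit rotation model --- Petersen--Pylyavskyy--Rhoades-style web combinatorics handles $m=3$ but does not generalize uniformly --- so producing a $\cdes$-equivariant fixed-point bijection, or equivalently constructing a $\cdes$-graded $S_{nm}$-module $\bigoplus_k V_k$ realizing $q^{-\kappa(\lambda)}f^{\lambda}_k(q)$ as a graded trace with $\prom_{nm}$ acting as the long cycle, is the crux of the conjecture and the step I expect to consume almost all of the effort.
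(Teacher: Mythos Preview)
The paper does not prove this statement: it is explicitly presented as a \emph{conjecture}, supported only by computer experiments, and no proof is offered. There is therefore no ``paper's own proof'' against which to compare your proposal.

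Your write-up is not a proof either, and to your credit you essentially say so. The two preliminary observations are correct and properly set the stage. But the two substantive steps are both left as programs rather than arguments. On the polynomial side, the hoped-for deletion identity extending \eqref{eq:g-polynomial} is only conjectural: for $m\ge 3$ there is no known closed product formula for $f^{(n^m)}_k(q)$, and the ``smaller shapes'' you allude to are not identified, so the root-of-unity evaluations cannot actually be carried out. On the fixed-point side you yourself identify the genuine obstruction --- the lack of a $\cdes$-equivariant model for promotion on $\SYT(n^m)$ when $m\ge 3$ --- and correctly flag it as the crux. That is an honest assessment of the state of affairs, but it means what you have written is a research outline, not a proof; the conjecture remains open.
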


\section{Case \texorpdfstring{$s=1$}{s=1} and non-crossing (1,2)-configurations}\label{sec:s=1}

For $s=1$, there is a nice Catalan family, given by non-crossing $(1,2)$-configurations
described in \cite[Family 60]{StanleyCatalan}.
In the first subsection, we introduce a twisted rotation action on such configurations,
and prove a new instance of Catalan CSP. In the second subsection,
we refine a CSP result of Thiel, where the group action is given by rotation.

\subsection{A new Catalan CSP under twisted rotation}

A \defin{non-crossing $(1,2)$-configuration} of size $n$ is
constructed by placing vertices $1,\dotsc,n-1$ around a circle,
and then drawing some non-intersecting edges between the vertices.
Here, we allow vertices to have a loop, which is counted as an edge.
There are $\Cat{n}$ elements in this family.
Let $\NCC(n)$ be the set of such objects of size $n$,
and let $\NCC(n,k)$ be the subset of those with $k-1$ edges, loops included.
See \cref{sec:catalanObjectsAppendix} for a figure when $n=3$.

\begin{bijection}[Laser construction]\label{def:laserBij}
	See Figure \ref{fig:laser} for an example. Let $P \in \DP(n)$.
	Define the non-crossing $(1,2)$-configuration $\mdefin{\laser(P)}$ as follows.
	First, number the east-steps with $1,2,\dotsc, n-1$.
	Secondly, if there is a valley at $(i,j)$, draw a line (a laser)
	from $(i,j)$ to $(i + \Delta, j + \Delta)$, where $\Delta$ is the
	smallest positive integer such that $(i + \Delta, j + \Delta)$ lies on $P$.
	Now, consider an east-step ending in $(i_1,j_1)$ on $P$.
	If there is a laser drawn from $(i_1,j_1)$, then let $(i_2, j_2)$
	be the vertex of $P$ where this laser ends. Then there is an edge
	between $j_1$ and $j_2-1$ in $\laser(P)$ (this can be a loop).
	The remaining vertices in $\laser(P)$
	will be unmarked, that is, unpaired and without a loop.
\end{bijection}

\begin{proposition}\label{prop:Lazer bijection}
	The map $\laser$ is a bijection $\DP(n) \to \NCC(n)$.
\end{proposition}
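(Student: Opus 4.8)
The plan is to show that $\laser$ is well-defined and injective; since $|\DP(n)| = \Cat{n} = |\NCC(n)|$, this already forces $\laser$ to be a bijection.

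\textbf{Well-definedness.} One must check that $\laser(P)$ really is a non-crossing $(1,2)$-configuration, which breaks into three claims, all of which I would deduce from the single principle that a laser is cut off at the \emph{first} moment the path $P$ returns to the laser's diagonal level. (i) Every laser terminates on $P$: right after a valley the path leaves that level, and since $P$ ends on the main diagonal, the intermediate-value property for the height of $P$ forces $P$ to revisit the level, so $\Delta$ exists; moreover the landing point lies strictly up and to the right of the valley, so the edge it produces joins honest vertices. (ii) The output has all degrees at most one, a loop counting as an edge: if a vertex lay on two of the constructed edges, then either two lasers would land at the same lattice point of $P$, or some laser would overshoot a point where $P$ already meets its level --- both excluded by the first-return principle. (iii) The edges are pairwise non-crossing: two lasers drawn on the same diagonal level lie along one line and, by first return, cover disjoint intervals; two lasers on different levels whose edges interleaved would force $P$ to travel from the level of one to the level of the other and back, hence to meet the higher laser's level strictly between its endpoints, again contradicting first return.

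\textbf{Injectivity.} I would reconstruct $P$ from $C \coloneqq \laser(P)$. In an edge $\{a,b\}$ of $C$ with $a \le b$, the smaller label $a$ must be the east-step carrying the valley and $b+1$ must be the east-step at which its laser lands (a loop $\{a,a\}$ signalling a laser of length one), while a vertex lying in no edge must be an east-step not immediately followed by a north-step. These data pin down, reading left to right, how many north-steps sit between each pair of consecutive east-steps, hence the lattice point of every east-step, and therefore $P$ itself. So $\laser$ is injective, and thus a bijection.

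\textbf{Expected main obstacle.} The delicate step is claim (iii): turning the geometric fact that parallel $45^{\circ}$ laser segments above a Dyck path cannot interleave into the combinatorial non-crossing condition on the edges of $\laser(P)$, while keeping straight the index shift (landing point $j_2 \mapsto$ vertex $j_2-1$) built into the definition of $\laser$. An alternative organization is to bypass injectivity and instead write down the inverse map $\NCC(n) \to \DP(n)$ explicitly --- each edge telling one where to insert a run of north-steps --- and then verify the two round-trip compositions, which are of comparable, routine difficulty.
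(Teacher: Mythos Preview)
Your argument is sound and your outline would yield a complete proof; the only slip is in the injectivity paragraph, where ``reading left to right'' does not quite work as stated. Knowing which east-steps are valleys is not enough on its own (two distinct Dyck paths can share their valley set), and from the configuration you cannot read off $h_{v+1}$ immediately when $v$ is a source. The clean fix is to run the reconstruction \emph{right to left}: set $h_n=0$, and for $v=n-1,n-2,\dotsc,1$ put $h_v=h_{v+1}+1$ if $v$ is not the smaller endpoint of any edge, and $h_v=h_{w+1}$ if $\{v,w\}$ (with $v\le w$) is an edge. This determines all the levels, hence the path.

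Your route is genuinely different from the paper's. The paper does not prove the proposition directly: it observes that $\laser$ is (up to stripping the first north step) the bijection between $(n{+}1,n)$-Dyck paths and $\NCC(n)$ established by Bodnar, and defers to that reference. What you gain with your approach is a self-contained, elementary argument that needs no rational Catalan machinery and makes the first-return mechanism completely transparent; what the citation buys is brevity and a pointer to the broader context in which this bijection is a special case.
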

A proof of \cref{prop:Lazer bijection} can essentially be found in \cite[Prop.~6.5]{Bodnar2019}.
M.~Bodnar studies so called \defin{$n+1,n$-Dyck paths} and shows that these are
in bijection with $\NCC(n)$. It is not hard, however, to see that
the set of $n+1,n$-Dyck paths is in bijection
with $\DYCK(n)$ by removing the first north-step.

Note that there is a natural correspondence between Dyck paths
of size $n$ and paths of size $n-1$ that stay weakly above the diagonal $y=x-1$.
If $P=\sfw_1 \sfw_2 \dotsb \sfw_{2n-1} \sfw_{2n}$ is a Dyck path of
size $n$, then let $P'=\sfw_2 \dotsb \sfw_{2n-1}\in \PATHS_1(n-1)$.
Furthermore, $P$ and $P'$ have the same number of valleys.

\begin{figure}[!ht]
\[
	\begin{tikzpicture}[scale=0.5,baseline=(current bounding box.center)]
	\draw (0,0) grid (6,6);
	\draw (0,0) [Dyck path={1,1,0,1,0,0,1,1,0,1,0,0}];
	\end{tikzpicture}
	\quad \longleftrightarrow \quad
	\matching{5}{3/5}{1,4}
\]
\caption{An example of the bijection $\laser$ in \cref{def:laserBij}.}\label{fig:laser}
\end{figure}

\begin{lemma}\label{lem:nccNarayana}
 We have that $|\NCC(n,k)| = \Nar{n}{k}$, the Narayana numbers.
\end{lemma}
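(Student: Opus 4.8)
The plan is to transport the identity through the laser bijection $\laser\colon\DYCK(n)\to\NCC(n)$ of \cref{prop:Lazer bijection}. Concretely, I would show that $\laser$ restricts to a bijection between the set of Dyck paths of size $n$ with exactly $k$ peaks and $\NCC(n,k)$; since that set of Dyck paths is $\DYCK(n,k)$, whose cardinality is $\Nar{n}{k}$, the lemma follows immediately.

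The first step is to express the number of edges and loops of $\laser(P)$ in terms of $P$. Inspecting the construction in \cref{def:laserBij}, exactly one laser is emitted from each valley of $P$ (equivalently of $P'$, as the two paths have the same number of valleys), each laser reaches $P$ again and thereby contributes exactly one edge or loop to $\laser(P)$, and distinct valleys give rise to distinct lasers and hence distinct edges. That all lasers terminate on $P$ and that the resulting assignment is a clean bijection are precisely the facts underlying \cref{prop:Lazer bijection}, so here they only have to be invoked with careful bookkeeping of the edge count. Thus the number of edges and loops of $\laser(P)$ equals the number of valleys of $P$.

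The second step is the elementary observation that along a nonempty Dyck path the peaks and the valleys strictly alternate, so their counts differ by exactly one, with there being one more peak than valley. Combining the two steps: if $P\in\DYCK(n)$ has $k$ peaks, then $\laser(P)$ has $k-1$ edges and loops, i.e.\ $\laser(P)\in\NCC(n,k)$ by the definition of that set. Since $\laser$ is a bijection, it restricts to a bijection $\DYCK(n,k)\to\NCC(n,k)$, and we conclude $|\NCC(n,k)|=|\DYCK(n,k)|=\Nar{n}{k}$.

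The only real work is the first step, i.e.\ verifying that the laser construction produces precisely one edge per valley with no omissions and no coincidences. I expect this to be the main obstacle, but it is essentially already contained in the proof of \cref{prop:Lazer bijection}; the added content is merely to read off from that proof that the statistic ``number of valleys'' on $\DYCK(n)$ is carried to the statistic ``number of edges and loops'' on $\NCC(n)$.
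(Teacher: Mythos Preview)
Your proposal is correct and follows essentially the same approach as the paper's proof: the paper also argues that $\laser$ sends Dyck paths with $k-1$ valleys to non-crossing $(1,2)$-configurations with $k-1$ edges, and then notes that a Dyck path with $k-1$ valleys has $k$ peaks. Your write-up is simply a more detailed unpacking of these same two observations.
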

\begin{proof}
The bijection $\laser$ maps Dyck paths with $k-1$ valleys to
non-crossing $(1,2)$-configurations with $k-1$ edges.
It remains to note that a Dyck path with $k-1$ valleys has $k$ peaks.
\end{proof}

\begin{remark}
Recall that the \defin{Motzkin numbers} $M_i$ count the number of ways to draw
non-intersecting chords on $i$ vertices arranged around a circle, see \oeis{A001006}
in the OEIS.
The set
\[
 \{ C \in  \NCC(n+1) : \mathrm{loops}(C) = l \}
\]
has cardinality $\binom{n}{l}M_{n-l}$ sice we can first choose the $l$ vertices
which have loops, and then proceed by choosing one of the $M_{n-l}$
possible arrangements of non-intersecting chords
on the remaining $n-l$ vertices.
\end{remark}

Let $\rot_n$ denote rotation by one step, acting on $\NCC(n+1)$.
Furthermore, let $\mdefin{\flip}$ denote the the action
of removing the mark on vertex $1$ if it is marked,
and marking it if it is unmarked. It does not do
anything if $1$ is connected to an edge.
We refer to this as a \defin{flip}.

Let the \defin{twist action} be defined as $\mdefin{\twist_{2n}} \coloneqq \rot_n \circ \flip$.
It is straightforward to see that $\twist_{2n}$ generates a cyclic group of order $2n$
acting on $\NCC(n+1)$.
Alternatively, we can act by $(\rot_n \circ \flip)^{n-1}$,
which closely resembles \emph{promotion}.
Recall that promotion on SYT may be defined as a sequence of swaps,
for $i=1,2,\dotsc,n-1$, where swap $i$ interchanges the
labels $i$ and $i+1$ if possible.
\begin{figure}[!ht]
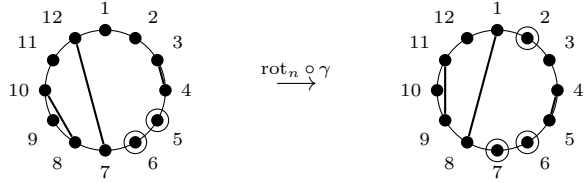

	\centering
	\[
	\matching{12}{3/4,7/12, 8/10}{5,6}
	\qquad
	\stackrel{\rot_n \circ \flip}{\longrightarrow}
	\qquad
	\matching{12}{4/5,  8/1, 9/11}{2,6,7}
	\]
	\caption{The result of $\rot_n \circ \flip$ on an element in $\NCC(13)$.}
\end{figure}

\begin{theorem}[A new cyclic sieving on Catalan objects]\label{thm:newCatalanCSP}
The triple
 \[
  \left( \NCC(n+1), \langle \twist_{2n} \rangle, \qbinom{2n}{n}  -  q^{2}\qbinom{2n}{n-2} \right)
 \]
exhibits the cyclic sieving phenomenon. Note that
\[
 \qbinom{2n}{n}  -  q^{2}\qbinom{2n}{n-2} = \frac{[2]_q}{[n+2]_q}\qbinom{2n+1}{n}.
\]
\end{theorem}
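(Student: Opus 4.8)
The plan is to run the standard combinatorial argument for cyclic sieving. Set $f(q)\coloneqq\qbinom{2n}{n}-q^{2}\qbinom{2n}{n-2}$. By \cref{prop:qIdentsForSkewSYT} this is $Y_{n,1}(q)=\sum_{P\in\PATHS_{1}(n)}q^{\maj(P)}\in\setN[q]$ (and, transporting along $\laser^{-1}$ composed with the identification $\DYCK(n+1)\cong\PATHS_{1}(n)$, one even gets $f(q)=\sum_{C\in\NCC(n+1)}q^{\mathrm{st}(C)}$ for a natural statistic $\mathrm{st}$, though that is not needed below), so in particular $f(1)=\Cat{n+1}=|\NCC(n+1)|$. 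I would also record the identity $(1+q)\,\Cat[q]{n+1}=(1+q^{n+1})\,f(q)$, equivalently $f(q)=\tfrac{[2]_{q}}{[n+2]_{q}}\qbinom{2n+1}{n}$, which is a short $q$-binomial manipulation. By \cref{lem:cspSimple} and \cref{lem:cspSimple2} it then suffices to verify \eqref{eq:cspDef} for divisors $d\mid 2n$, i.e.\ to show $f(\xi^{d})$ equals the number of $\twist_{2n}^{d}$-fixed configurations.

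The first real step is to understand the iterates of the twist. Writing $\twist_{2n}=\rot_{n}\circ\flip$ and repeatedly commuting the flip past rotations, one sees that $\twist_{2n}^{j}$ acts on $C\in\NCC(n+1)$ by toggling the marks of the unmatched vertices lying in some block $B_{j}$ of $j$ cyclically consecutive vertices and then rotating by $j$. Hence if $\twist_{2n}^{j}(C)=C$ then the underlying non-crossing partial matching $M$ of $C$, and so its set $U$ of unmatched vertices, is invariant under $\rot_{n}^{\gcd(n,j)}$; the marks then give a function $\ell\colon U\to\{0,1\}$ with $\ell(p)=\ell(p+j)+[\,p+j\in B_{j}\,]\pmod 2$. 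Running this recursion once around a $\rot_{n}^{j}$-orbit $O\subseteq U$ (a coset of $\gcd(n,j)\setZ/n\setZ$) shows it is solvable iff the number of toggles $|B_{j}\cap O|=j/\gcd(n,j)$ is even, in which case $\ell|_{O}$ is free up to one bit. So the number of $\twist_{2n}^{j}$-fixed configurations with a given $M$ is $0$ when $j/\gcd(n,j)$ is odd and $U\neq\varnothing$, and $2^{\#(U/\rot_{n}^{j})}$ otherwise.

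Now I would split on $m\coloneqq\gcd(2n,d)$ (we may replace $d$ by $m$ by \cref{lem:cspSimple2}). If $m\mid n$ then $m/\gcd(n,m)=1$, so every $\twist_{2n}^{m}$-fixed configuration has $U=\varnothing$, i.e.\ is a non-crossing perfect matching on $n$ vertices fixed by $\rot_{n}^{m}$; by the classical cyclic sieving on non-crossing matchings their number is the value of $\Cat[q]{n/2}$ at a primitive $(n/m)$-th root of unity when $n$ is even, and $0$ when $n$ is odd, which one matches with $f(\xi^{d})$ by applying \cref{thm:q-Lucas} to both sides. If $m\nmid n$ then, since $m\mid 2n$, $m$ is even, $\gcd(n,m)=m/2$, and $n/(m/2)$ is odd; hence $\rot_{n}^{m}$ and $\rot_{n}^{m/2}$ generate the same cyclic group and have the same orbits, and comparing the count above with the immediate count of configurations fixed by the pure rotation $\rot_{n}^{m/2}$ — matching $\rot_{n}^{m/2}$-invariant and marks constant on its orbits, again $2^{\#\text{orbits}}$ per matching — shows $\twist_{2n}^{m}$ and $\rot_{n}^{m/2}$ have equally many fixed points. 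By Thiel's cyclic sieving theorem \cite{Thiel2017} (with the primitive $n$-th root $\xi^{2}$) this number is the value of $\Cat[q]{n+1}$ at $(\xi^{2})^{m/2}=\xi^{m}$; and since $m$ is even, $(\xi^{m})^{n+1}=\xi^{m}$, so evaluating $(1+q)\,\Cat[q]{n+1}=(1+q^{n+1})f(q)$ at $q=\xi^{m}$ yields $\Cat[q]{n+1}(\xi^{m})=f(\xi^{m})=f(\xi^{d})$, the last equality by \cref{lem:cspSimple} (note $1+\xi^{m}\neq 0$, since $\xi^{m}=-1$ would force $m=n$, contradicting $m\nmid n$).

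The step I expect to be the main obstacle is the second one: pinning down the action of $\twist_{2n}^{j}$ precisely as a block-toggle followed by a rotation, and then counting the admissible mark-patterns orbit by orbit — the parity of $j/\gcd(n,j)$ is exactly the dividing line, and getting the holonomy computation around an orbit right is the crux. Once that dichotomy is in place, the reduction to non-crossing matchings in one case and to Thiel's theorem in the other, together with the accompanying $q$-Lucas evaluations, should be routine.
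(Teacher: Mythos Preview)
Your proof is correct and takes a genuinely different route from the paper's.

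The paper argues by brute force: it splits on the parity of $m$ (with several further subcases such as $n/d$ even or odd), counts $\twist_{2n}^{m}$-fixed configurations directly in each subcase, and matches the answers against \cref{lem:valuesAtRootsOfUnity}. Some of the subcase counts require nontrivial ad hoc arguments --- for instance a parity argument ruling out fixed points when $m$ is odd and $n/m>2$, and the identity $\sum_{i}\binom{d}{i}\binom{d-i}{i}2^{\,d-2i}=\binom{2d}{d}$ in the case where $n/d$ is odd.

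Your approach is more structural. You first identify $\twist_{2n}^{m}$ uniformly as ``toggle a block of $m$ cyclically consecutive vertices, then rotate by $m$'', and extract the clean holonomy criterion: a fixed point exists over a $\rot_{n}^{m}$-orbit of isolated vertices precisely when $m/\gcd(n,m)$ is even, with two admissible markings per orbit. This parity dichotomy replaces the paper's case explosion. You then reduce Case~1 ($m\mid n$, forcing $U=\varnothing$) to the classical CSP on $\NCM(n/2)$, and Case~2 ($m\nmid n$) to Thiel's CSP on $\NCC(n+1)$ via the equality $|\mathrm{Fix}(\twist_{2n}^{m})|=|\mathrm{Fix}(\rot_{n}^{m/2})|$ together with the identity $(1+q)\,\Cat[q]{n+1}=(1+q^{n+1})f(q)$. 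This is more conceptual: it exhibits the new CSP as assembled from two known ones, and it explains why $f(q)$ is so close to $\Cat[q]{n+1}$. The paper's approach, in exchange, is self-contained (it does not invoke either earlier CSP) and makes the fixed-point counts fully explicit.

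One presentational gap: in Case~1 you assert that $f(\xi^{m})$ matches the $\NCM$-CSP value ``by applying \cref{thm:q-Lucas} to both sides'', but this equality compares evaluations at roots of unity of \emph{different} orders (order $2n/m$ for $f$, order $n/m$ for $\Cat[q]{n/2}$) and is not a one-liner. It does go through --- splitting on the parity of $m$ and on whether $n/m=2$ or $n/m>2$, one finds both sides equal $\binom{m}{m/2}$, $\binom{m}{(m-1)/2}$, or $0$ as appropriate --- but you should spell this out. Also record explicitly that divisors $m$ of $2n$ satisfy $m\le n$ or $m=2n$, so that $B_{m}$ really is a set of $m$ distinct vertices in the regime where your holonomy computation is used.
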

\begin{proof}
We compute the number of fixed points under $\twist_{2n}^m$, where we may without loss of generality assume $m \mid 2n$.
There are two cases to consider, $m$ odd and $m$ even.
In the first, we must, according to \cref{lem:valuesAtRootsOfUnity}, show that
the number of fixed points under $\twist_{2n}^m$ is
\[
\begin{cases}
 \binom{m}{(m-1)/2} & \text{ if $m = n/2$ is odd,} \\
 0 & \text{ otherwise.}
 \end{cases}
\]
For the first expression we reason as follows. Since $m$ is odd, any fixed point for such $m$ must consist of a diagonal (an edge from $i$ to $i+m$) and two rotationally symmetrical halves, both consisting of $m-1$ vertices. In such a non-crossing configuration, no vertex can be isolated. To see this, note that if vertex $j$ is isolated, then so are $j+km \pmod{n}$, $k \in \mathbb{Z}$, but the $j+km \pmod{n}$ would need to be both marked and unmarked, a contradiction. Thus, all vertices in the non-crossing configuration are incident to an edge and it is in fact a non-crossing matching. The diagonal can be chosen in $m$ ways and a non-crossing matching on one of the two halves can be chosen
in $\Cat{(m-1)/2}$ ways, so there are $m\Cat{(m-1)/2} = \binom{m}{(m-1)/2}$ fixed points.

In the second case above, if $n = m$ then at least one vertex has to be isolated since $m$ is odd,
which implies there can be no fixed points. For $n/m > 2$, we use a ``parity'' argument.
Since any isolated vertices among $S = \{n-m+2, n-m+3, \dots, n, 1\}$
change from unmarked to marked and vice versa under $\twist_{2n}^m$, the number of isolated vertices has to be even.
Since $m$ is odd, this implies there must be an odd number of edges from $S$ to $[n] \setminus S$ in a fixed point. However, note that $S$ and the edges out of
$S$ completely determine the configuration. Hence the edges must have their other endpoints in the two neighboring intervals of length $m$. But this violates being rotationally symmetric under rotations of $m$ steps since the number of edges is odd.

In the case $2 \mid m$, according to \cref{lem:valuesAtRootsOfUnity} we must show
that the number of fixed points under $\twist_{2n}^m$ is equal to
\[
\begin{cases}
 \binom{2n}{n} - \binom{2n}{n+2} & \text{ if } m=2n, \\
 \Cat{n/2} & \text{ if $m=n$ is even},\\
 \binom{m}{m/2} & \text{ otherwise.}
 \end{cases}
\]
Counting fixpoints for the first case is trivial. For the second expression,
note that $\twist_{2n}^{n}$ is simply the action
of flipping the markings on all vertices.
A fixed point can therefore not have any isolated vertices.
What remains are non-crossing matchings, which there are $\Cat{n/2}$ many of.

It remains to prove the third expression. Let $m = 2d$.

\textbf{Case $n$ and $n/d$ are even.} In this case, the only possible
invariant configurations are non-crossing matchings that are rotationally symmetrical
when rotating $2d$ steps.
Recall from \cref{bij:BwToNCM} that such matchings are in
bijection with $\BW(2d,d)$ and this set clearly has cardinality $\binom{2d}{d}$.

\textbf{Case $n/d$ is odd.}
Here we can have fixed points under $\twist_{2n}^{m}$ with unpaired vertices,
for examples see \cref{F:nd udda}.
The orbit of a vertex $j$ under the operation $\twist_{2n}^{m}$
is $\{j+dk \pmod {n}\}_{k\in \mathbb Z}$ and if $1\le j\le d$ is an unpaired,
unmarked (that is, without a loop) vertex, the vertices $\{j+2dk \pmod {n}\}_{0\le k< n/2d}$
must be unmarked whereas $\{j+2dk \pmod {n}\}_{n/2d< k< n/d }$ will be marked.
Note that in the latter case $j+2dk \pmod {n}\equiv j+d+2dr$ for $r=k-(n/d+1)/2$.
Thus it suffices to understand the vertices from 1 to
$d$. We claim that for every $0\le i\le \lfloor d/2\rfloor$ we get a valid
fixed point by choosing $i$ left vertices
and $i$ right vertices and matching them in a non-crossing manner as in the previous case.
Then we can choose to put a loop at any subset of the remaining $d-2i$ unpaired vertices.
Every fixed point is now constructed exactly once.
This gives a total of $\sum_{i=0}^{\lfloor d/2\rfloor} \binom{d}{i}\binom{d-i}{i}2^{d-2i}$ fixed points.
Finally we need to prove that this sum is equal to $\binom{2d}d$.
We will use a bijection to all possible subsets $A$ of size $d$ from two rows
with numbers 1 to $d$, the numbers in the top row being blue and the bottom row red.
For a given $i$ we choose $i$ numbers and let both the red and the blue
belong to $A$ and then $i$ numbers such that neither blue nor red belong to $A$.
Finally the term $2^{d-2i}$ corresponds to choosing any subset of the
remaining $d-2i$ numbers such that the red numbers in that subset belong
to $A$ and the blue in the complement are in $A$.

%Here, the story is a bit more interesting.
%We have an odd number of pieces, each being a configuration on $d$ vertices.

\begin{figure}[!ht]
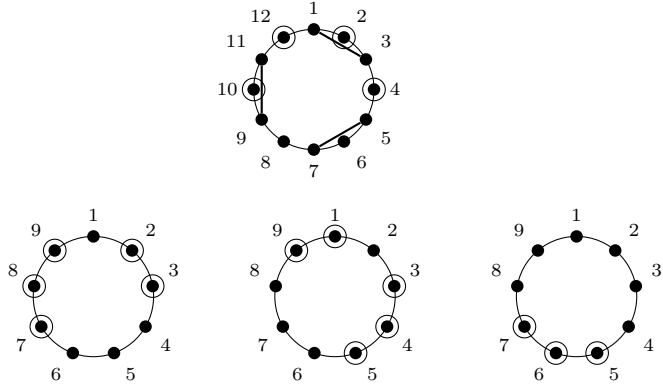

	\centering
	\[
	\matching{12}{1/3, 5/7, 9/11}{2,4,10,12} \qquad
	\]
%\end{figure}
%\begin{figure}[!ht]
	\centering
	\[
	\matching{9}{}{2,3,7,8,9}\qquad
	\matching{9}{}{1,3,4,5,9}\qquad
	\matching{9}{}{5,6,7}
	\]
\caption{A fixed point under $\twist_{2n}^m$, $n=12$, $m=8$ and $2n/m=3$, and below an
orbit under $\twist_{2n}^2$ of size $3$.}\label{F:nd udda}
\end{figure}
\end{proof}

\begin{problem}
It would be nice to refine the CSP triple in \cref{thm:newCatalanCSP}
to the hypothetical Narayana case discussed in \cref{sec:typeABNarayanaQuest}.
That is, we want the following equality to hold:
\[
 \qbinom{2n}{n}  -  q^{2}\qbinom{2n}{n-2} = \sum_{k=1}^{n} N(1,n,k;q)
\]
where $N(1,n,k;1)$ is the number of NCC on $n$ vertices with $(k-1)$ edges.
This is a natural consideration, as indicated by \cref{lem:nccNarayana}.
\end{problem}

\subsection{A refinement of Thiel's result}\label{sec:nccRefinement}

Recall that $\rot_n$ acts on non-crossing $(1,2)$-configurations of
size $(n+1)$ via a $2\pi/n$-rotation. M.~Thiel proved the following.
\begin{proposition}[See \cite{Thiel2017}.]\label{prop:thiel}
Let $n \in \setN$. The triple
\[
\left( \NCC(n+1), \langle \rot_n \rangle, \Cat[q]{n+1} \right)
\]
exhibits the cyclic sieving phenomenon.
\end{proposition}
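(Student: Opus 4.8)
The plan is to prove this by the combinatorial method used throughout the paper: compute the value of $\Cat[q]{n+1}=\frac{1}{[n+2]_q}\qbinom{2n+2}{n+1}$ at roots of unity and, separately, the number of fixed points of powers of $\rot_n$ acting on $\NCC(n+1)$, and check that the two agree. By \cref{lem:cspSimple} and \cref{lem:cspSimple2} it suffices to treat the powers $\rot_n^{d}$ with $d\mid n$; write $e=n/d$ for the order of $\rot_n^d$.

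For the polynomial side I would apply the $q$-Lucas theorem (\cref{thm:q-Lucas}) together with \cref{lem:qFraction}. When $e\geq 3$ one has $n=ed$, hence $2n+2\equiv 2$, $n+1\equiv 1$ and $n+2\equiv 2\pmod e$; $q$-Lucas then gives $\qbinom{2n+2}{n+1}[\xi^d]=\binom{2d}{d}\,\qbinom{2}{1}[\xi^d]$, and since $[n+2]_q$ evaluated at $\xi^d$ also equals $\qbinom{2}{1}[\xi^d]=1+\xi^d\neq 0$, one obtains $\Cat[\xi^d]{n+1}=\binom{2d}{d}$. The cases $e\in\{1,2\}$ are handled separately: $e=1$ gives $\Cat{n+1}$, while for $e=2$ (so $n$ even, $d=n/2$) one uses $\Cat[q]{n+1}=\qbinom{2n+2}{n+1}-q\qbinom{2n+2}{n}$ and $q$-Lucas to get $\Cat[-1]{n+1}=\binom{n+1}{n/2}$.

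For the fixed-point side, a configuration in $\NCC(n+1)$ fixed by $\rot_n^d$ is exactly a non-crossing $(1,2)$-configuration on $n$ vertices with $e$-fold rotational symmetry. The case $e=1$ is trivial. For $e=2$ such a configuration either contains a diameter $\{i,i+n/2\}$ — in which case choosing the diameter and a non-crossing $(1,2)$-configuration on one of the two arcs of $n/2-1$ vertices ($\Cat{n/2}$ choices) determines it — or it has no fixed edge, so edges occur in orbits of size two and the vertices $i$, $i+n/2$ lie in matching states; summing these should reproduce $\binom{n+1}{n/2}$. For $e\geq 3$ there is no edge fixed by $\rot_n^d$, so edges, loops and isolated vertices all occur in free orbits of size $e$, and the configuration is recovered from its restriction to a fundamental wedge of $d$ consecutive vertices; the only subtlety is that an edge leaving the wedge into a neighbouring block is permissible only if it ``points backward'' in a precise sense, since otherwise its rotated copies would cross. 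Organizing the enumeration by the number of such boundary-crossing edges and the placement of loops among the remaining wedge vertices leads to a sum of products of binomial coefficients that must be shown to equal $\binom{2d}{d}$; in contrast to the proof of \cref{thm:newCatalanCSP} there is no flip, hence no parity obstruction, but this boundary bookkeeping is still the delicate point.

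Finally I would match the two sides; after simplification this reduces to standard binomial identities, verifiable by hand or, in the small-residue cases, by a computer algebra system as in the proof of \cref{thm:PMrefinedCSP}. I expect the main obstacle to be precisely the $e\geq 3$ fixed-point count: since the laser bijection $\laser\colon\DYCK(n+1)\to\NCC(n+1)$ is \emph{not} equivariant for rotation, one cannot transport a known cyclic sieving result from Dyck paths, and so the rotation-symmetric non-crossing $(1,2)$-configurations must be enumerated directly, with careful attention to loops and to edges straddling the boundary of a fundamental domain. (Alternatively, one could first establish the finer \cref{thm:thielRefinement} — the CSP on $\NCC(n+1)$ with a prescribed number of loops and edges — and sum over these parameters, but that refinement is itself proved by an analogous, and no easier, fixed-point computation.)
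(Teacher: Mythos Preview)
The paper does not prove \cref{prop:thiel} itself; it is stated with a citation to Thiel~\cite{Thiel2017}, and the paper's own contribution is the \emph{refinement} \cref{thm:thielRefinement}, from which \cref{prop:thiel} is recovered by summing the polynomials $\qNCC[q]{n}{e}{l}$ over all $(e,l)$ via \cref{lem:refinementOfqNarayana}.

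Your direct approach is sound and your root-of-unity evaluations are correct. The fixed-point enumeration you flag as the delicate step is in fact most cleanly organised exactly as in the proof of \cref{thm:thielRefinement}: for order $n/d\ge 3$, stratifying a fundamental arc of $d$ vertices by its number $e_1$ of proper edges and $l_1$ of loops, and using \cref{bij:BwToNCM} to count the rotationally compatible matchings on the $2e_1$ chosen edge-endpoints, gives $\sum_{e_1}\binom{d}{2e_1}\binom{2e_1}{e_1}2^{d-2e_1}=\binom{2d}{d}$; this is the identity proved bijectively at the end of the proof of \cref{thm:newCatalanCSP}, since $\binom{d}{2i}\binom{2i}{i}=\binom{d}{i}\binom{d-i}{i}$. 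For order $n/d=2$ the same sum contributes $\binom{n}{n/2}$ and the additional diagonal case of \cref{thm:thielRefinement} supplies the remaining $\binom{n}{n/2-1}$, totalling $\binom{n+1}{n/2}$ as you computed. So the ``boundary bookkeeping'' you worry about is already packaged in \cref{bij:BwToNCM} and requires no separate ad hoc argument; your parenthetical alternative of passing through \cref{thm:thielRefinement} is precisely how the paper implicitly recovers \cref{prop:thiel}, and is not harder but in fact the natural way to structure the count.
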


Denote $\mdefin{\NCC(n+1, e, l)}$ the number of non-crossing $(1,2)$-configurations
with $n$ vertices, $e$ proper edges and $l$ loops.
We determine an element in $\NCC(n+1,e,l)$ by first choosing the $2e$ vertices that
are incident to some proper edge in $\binom{n}{2e}$ ways, then choosing a
non-crossing matching among these $2e$ vertices in $\Cat{e}$ ways and finally
choosing $l$ of the remaining $n-2e$ vertices to be loops. Hence,
\begin{equation}\label{eq:nccNELCount}
| \NCC(n+1, e, l) | = \binom{n}{2e} \Cat{e} \binom{n-2e}{l}.
\end{equation}
For any $e, l \in \setN$, define the following $q$-analog of the above the expression:
\begin{align}
\mdefin{\qNCC[q]{n}{e}{l}} &\coloneqq q^{e(e+1)+(n+1)l}\qbinom{n}{2e}\Cat[q]{e}\qbinom{n-2e}{l} \\
&= q^{e(e+1)+(n+1)l} \frac{1}{[e+1]_q}\qbinom{n}{e,e,l,n-2e-l}.
\end{align}

\begin{example}
	Consider the case with $n=4$ and $k=2$ proper edges and loops. We have
	\begin{align*}
		\qNCC[q]{4}{2}{0} &= q^6(1+q^2)\\
		\qNCC[q]{4}{1}{1} &= q^7(1+2q+3q^2+3q^3+2q^4+q^5)\\
		\qNCC[q]{4}{0}{2} &= q^{10}(1+q+2q^2+q^3+q^4)
	\end{align*}
	It is easily verified that these polynomials refine $\Nar[q]{5}{3}$. That is,
	\begin{align*}
	\Nar[q]{5}{3} &=q^6(1+q+3q^2+3q^3+4q^4+3q^5+3q^6+q^7+q^8) \\
	&=\qNCC[q]{4}{2}{0}+\qNCC[q]{4}{1}{1}+\qNCC[q]{4}{0}{2}.
\end{align*}
\end{example}

\begin{lemma}\label{lem:refinementOfqNarayana}
 For every $k\geq 0$ we have the identity
\[
\Nar[q]{n+1}{k+1} = \sum_{e+l=k} \qNCC[q]{n}{e}{l}.
\]
\end{lemma}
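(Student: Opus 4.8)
The plan is to prove the polynomial identity
\[
\Nar[q]{n+1}{k+1} = \sum_{e+l=k} \qNCC[q]{n}{e}{l}
\]
by a straightforward computation with $q$-binomials, using the $q$-Vandermonde identity (\cref{thm:q-vandermonde}) as the key tool. Expanding the left-hand side using its definition gives
\[
\Nar[q]{n+1}{k+1} = \frac{q^{(k+1)k}}{[n+1]_q}\qbinom{n+1}{k+1}\qbinom{n+1}{k},
\]
while the right-hand side, after substituting the definition of $\qNCC[q]{n}{e}{l}$ with $l = k-e$, reads
\[
\sum_{e=0}^{k} q^{e(e+1)+(n+1)(k-e)}\,\frac{1}{[e+1]_q}\qbinom{n}{2e}\qbinom{2e}{e}\qbinom{n-2e}{k-e}.
\]
The main task is to show these two expressions are equal as elements of $\setN[q]$ (or $\setZ(q)$, clearing denominators as needed).

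\textbf{Key steps.} First I would rewrite $\frac{1}{[e+1]_q}\qbinom{2e}{e} = \Cat[q]{e}$ and clear the $[n+1]_q$ and $[e+1]_q$ denominators by multiplying through, reducing the claim to a polynomial identity among $q$-binomials and $q$-integers. Next, the combinatorial heart: the product $\qbinom{n}{2e}\qbinom{2e}{e}$ equals $\qbinom{n}{e}\qbinom{n-e}{e}$ (both count, with the $\maj$ or $\inv$ statistic, the number of ways to choose an ordered pair of disjoint $e$-subsets of $[n]$ up to the appropriate power of $q$), so the summand becomes, up to the exponent prefactor,
\[
\frac{1}{[e+1]_q}\qbinom{n}{e}\qbinom{n-e}{e}\qbinom{n-2e}{k-e},
\]
and $\qbinom{n-e}{e}\qbinom{n-2e}{k-e}$ can be regrouped as $\qbinom{n-e}{k}\qbinom{k}{e}$ times a power of $q$ (again via the standard $q$-trinomial revision of subset-choosing). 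This collapses the sum over $e$ into a single $q$-Vandermonde-type sum over $\qbinom{k}{e}$, which I would match against $\qbinom{n+1}{k+1}$ after carefully tracking the exponent $e(e+1)+(n+1)(k-e)$ and absorbing it into the shifts required by \cref{thm:q-vandermonde}. Finally, I would verify the residual $q$-integer identity (analogous to the role of equation \eqref{eq: q-integer} in the proof of \eqref{eq:maj-polSYT}) to confirm the $[e+1]_q$ in the denominator cancels correctly against the numerators — this is the step most prone to error.

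\textbf{Main obstacle.} The principal difficulty is bookkeeping the powers of $q$: each regrouping of $q$-binomials introduces a shift of the form $q^{j(a-c+j)}$, and the exponent $e(e+1)+(n+1)(k-e)$ must be split and redistributed so that after all regroupings the residual exponent is exactly what $q$-Vandermonde demands. An alternative, possibly cleaner, route would be a bijective proof: combine the laser bijection $\laser$ and \cref{lem:nccNarayana} to interpret $\Nar[q]{n+1}{k+1}$ via $\maj$ on Dyck paths of size $n+1$ with $k$ valleys, then exhibit a $\maj$-preserving (up to the stated $q$-power shifts) correspondence between such Dyck paths and the data $(\text{choice of } 2e \text{ edge-vertices},\ \text{non-crossing matching},\ \text{choice of } l \text{ loops})$ underlying \eqref{eq:nccNELCount}; but making the statistics match exactly is itself delicate, so the algebraic route with $q$-Vandermonde is the one I would pursue first.
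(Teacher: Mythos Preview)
Your approach is essentially the same as the paper's: both expand the definitions, massage the sum over $e$ using $q$-binomial manipulations, and reduce to a single application of the $q$-Vandermonde identity (\cref{thm:q-vandermonde}) with $a=n+1-k$, $b=k$, $c=k+1$. The paper carries this out by writing everything in terms of $q$-factorials and cancelling directly, whereas you outline the equivalent route via the trinomial revisions $\qbinom{n}{2e}\qbinom{2e}{e}=\qbinom{n}{e}\qbinom{n-e}{e}$ and $\qbinom{n-e}{e}\qbinom{n-2e}{k-e}=\qbinom{n-e}{k}\qbinom{k}{e}$; note that these identities hold \emph{without} any extra power of $q$ (they are direct equalities of $q$-factorial expressions), so the exponent bookkeeping is less delicate than you fear. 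The denominator $[e+1]_q$ is absorbed cleanly via $\tfrac{1}{[e+1]_q}\qbinom{n}{e}=\tfrac{1}{[n+1]_q}\qbinom{n+1}{e+1}$, after which one more trinomial revision pulls out the factor $\qbinom{n+1}{k}$ and the remaining sum is exactly the $q$-Vandermonde instance above.
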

\begin{proof}
 Unraveling the definitions, it suffices to show that
  \begin{equation*}
  \frac{q^{k(k+1)}}{[n+1]_q} \qbinom{n+1}{k+1}\qbinom{n+1}{k}   =
  \sum_{e+l=k} q^{e(e+1)+(n+1)l} \qbinom{n}{2e} \qbinom{n-2e}{l}  \frac{1}{[e+1]_q} \qbinom{2e}{e}.
 \end{equation*}
 Expanding the $q$-binomials
 gives
 \begin{align*}
  &\frac{q^{k(k+1)}}{[n+1]} \frac{[n+1]!}{[k+1]![n-k]!}\frac{[n+1]!}{[k]![n-k+1]!} \\
  &= \sum_{0 \leq e \leq k }\frac{ q^{e(e+1)+(n+1)(k-e)} [n]!}{[2e]![n-2e]!} \frac{[n-2e]!}{[k-e]![n-k-e]!}
  \frac{[2e]!}{[e+1]![e]!},
 \end{align*}
 where we have omitted the $q$-subscripts for brevity.
 We start doing cancellations,
\begin{equation*}
q^{k(k+1)} \frac{[n]!}{[k+1]![n-k]!}\frac{[n+1]!}{[k]![n-k+1]!}   =
  [n]! \sum_{0 \leq e \leq k } \frac{1}{[e+1]} \frac{q^{e(e+1)+(n+1)(k-e)} }{[k-e]![n-k-e]![e]![e]!},
 \end{equation*}
and additional cancellations and some rewriting gives
 \begin{equation*}
  q^{k(k+1)} \frac{[n+1]!}{[k]![k+1]![n-k]![n-k+1]!}   =
  \sum_{0 \leq e \leq k } \frac{q^{e(e+1)+(n+1)(k-e)} }{[k-e]![n-k-e]![e]![e+1]!}.
 \end{equation*}
Further rewriting now gives
 \begin{align*}
  q^{k(k+1)} \frac{[n+1]!}{[n-k]![k+1]!}  =
  \sum_{0 \leq e \leq k } q^{e(e+1)+(n+1)(k-e)}  \frac{[k]!}{[k-e]![e]!}\frac{[n-k+1]!}{[n-k-e]![e+1]!}.
 \end{align*}
 Thus, the identity we wish to prove is equivalent to showing that
  \begin{align*}
 \qbinom{n+1}{k+1}  =
  \sum_{0 \leq e \leq k } q^{(k- e)(n - e - k)}  \qbinom{k}{k-e} \qbinom{n-k+1}{e+1}.
 \end{align*}
However, this follows from the $q$-Vandermonde identity (\cref{thm:q-vandermonde})
by substituting $a = n+1-k$, $b=k$, $c=k+1$ and $j=k-e$.
\end{proof}

It is clear that one can restrict the action of $\rot_n$ to $\NCC(n+1,e,l)$.
The following result is a refinement of \cref{prop:thiel}.
\begin{theorem}\label{thm:thielRefinement}
Let $n,e,l \in \setN$. The triple
\[
(\NCC(n+1,e,l), \langle \rot_n \rangle, \qNCC[q]{n}{e}{l})
\]
exhibits the cyclic sieving phenomenon.
\end{theorem}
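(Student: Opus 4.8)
The plan is to follow the standard combinatorial template for establishing a CSP: evaluate the polynomial $\qNCC[q]{n}{e}{l}$ at roots of unity via the $q$-Lucas theorem, and independently count the configurations in $\NCC(n+1,e,l)$ fixed by powers of $\rot_n$, then check the two agree. By \cref{lem:cspSimple} and \cref{lem:cspSimple2} it suffices to treat $d \mid n$, and by \cref{lem:refinementOfqNarayana} summing over $e+l=k$ recovers $\Nar[q]{n+1}{k}$, which is consistent with \cref{prop:thiel} being the unrefined statement; this gives a useful sanity check at each stage.

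First I would compute $\qNCC[\xi^d]{n}{e}{l}$ where $\xi$ is a primitive $n^{\text{th}}$ root of unity. Writing $\qNCC[q]{n}{e}{l} = q^{e(e+1)+(n+1)l}\frac{1}{[e+1]_q}\qbinom{n}{e,e,l,n-2e-l}$, one applies \cref{lem:qFraction} to the factor $[n]_q/[e+1]_q$ hidden inside (after absorbing it as $\frac{1}{[e+1]_q}\qbinom{2e}{e} = \Cat[q]{e}$ times $\qbinom{n}{2e}$ times $\qbinom{n-2e}{l}$), and the $q$-Lucas theorem (\cref{thm:q-Lucas}) to each $q$-binomial factor, with $\order(\xi^d) = n/\gcd(n,d) =: m$ as the modulus. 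The expectation is that the evaluation vanishes unless $m \mid 2e$ and $m \mid l$ (equivalently $m \mid n-2e-l$ as well), in which case it collapses to a product of ordinary binomials $\binom{n/m}{2e/m}\Cat{e/m \text{-type term}}\binom{(n-2e)/m}{l/m}$ — essentially $|\NCC(n/m+1, e/m, l/m)|$ — possibly times a power of the evaluated root of unity that must be checked to be $1$ (this is where palindromicity/symmetry of the polynomial is used to force integrality and positivity).

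Next I would count the fixed points of $\rot_n^d$ on $\NCC(n+1,e,l)$. A configuration fixed by rotation through $d$ steps is determined by its restriction to a fundamental domain of $m = n/\gcd(n,d)$ consecutive vertices together with the orbit structure of the edges crossing the domain boundary; the non-crossing condition severely constrains which edges may cross. The cleanest route is to set up an explicit bijection between $\rot_n^d$-fixed elements of $\NCC(n+1,e,l)$ and elements of $\NCC(n/m \cdot 1 + 1, e/m, l/m)$ (so $\NCC$ on $n/m$ vertices with $e/m$ edges and $l/m$ loops), analogous to the $\BW$-fixed-point bijection $\cref{bij:BwToNCM}$ used elsewhere in the paper, and to observe the fixed-point set is empty whenever $m \nmid 2e$ or $m \nmid l$ — because an edge orbit has size $m/\gcd(\cdots)$ and loops/edges get permuted in full orbits, so their counts must be divisible by the appropriate quantities. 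One must be slightly careful about edges invariant under the rotation (a diagonal), which can only occur when $m$ is even and contributes the "central" symmetry type; but since loops count separately and a proper edge fixed by a nontrivial rotation would have to be a diameter of the $m$-fold symmetric picture, this case is handled exactly as the analogous cases in the proof of \cref{thm:PMrefinedCSP}.

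The main obstacle I anticipate is the careful bookkeeping in the fixed-point count: verifying that the non-crossing structure of a rotationally symmetric $(1,2)$-configuration forces the divisibility conditions $m \mid 2e$, $m \mid l$ and that, when they hold, the quotient construction is genuinely a bijection onto $\NCC(n/m + 1, e/m, l/m)$ with no stray factors or missed configurations — in particular accounting correctly for unmarked (isolated, loopless) vertices, whose orbits contribute the $\binom{n-2e}{l}$-type factor. Matching the precise power-of-$\xi^d$ prefactor $q^{e(e+1)+(n+1)l}$ against the (trivial) weight in the fixed-point count is the other place a sign or exponent could go wrong; I would verify the prefactor evaluates to $1$ at $\xi^d$ under the divisibility hypotheses by a direct exponent computation mod $m$, using $n \equiv 0 \pmod m$ and the parity of $e(e+1)$, rather than appealing to any symmetry. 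Once both sides are shown to equal $|\NCC(n/m+1, e/m, l/m)|$ in the surviving cases and $0$ otherwise, the theorem follows.
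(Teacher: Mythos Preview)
Your overall strategy matches the paper's: evaluate $\qNCC[q]{n}{e}{l}$ at roots of unity via $q$-Lucas and compare against a direct fixed-point count, case by case over $d\mid n$. The gap is in what you expect both sides to equal.

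You claim the fixed points of $\rot_n^d$ on $\NCC(n+1,e,l)$ are in bijection with $\NCC(n/m+1,e/m,l/m)$ (where $m=n/d$), and that the polynomial collapses to $|\NCC(n/m+1,e/m,l/m)|$. Both are incorrect, and in the same way: the middle factor is a central binomial coefficient, not a Catalan number. In the generic case $m\mid e$, $m\mid l$ (write $e=e_1 m$, $l=l_1 m$), the paper obtains
\[
\qNCC[\xi^d]{n}{e}{l}=\binom{d}{2e_1}\binom{2e_1}{e_1}\binom{d-2e_1}{l_1}.
\]
On the polynomial side this is because $[e+1]_q$ evaluates to $1$, not to $e_1+1$, at a primitive $m^\thsup$ root when $m\mid e$, so $\Cat[q]{e}$ specializes to $\binom{2e_1}{e_1}$ rather than $\Cat{e_1}$. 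On the combinatorial side, a $\rot_n^d$-fixed configuration is determined by a fundamental domain of $d$ vertices, but its restriction there is \emph{not} a smaller $(1,2)$-configuration: proper edges may cross the domain boundary into the next copy. The paper counts these by first choosing the $2e_1$ edge-endpoints among the $d$ vertices and then invoking \cref{bij:BwToNCM} to see that the admissible non-crossing symmetric matchings on those endpoints number $\binom{2e_1}{e_1}$. The bijection you propose to $\NCC(d+1,e_1,l_1)$ does not exist; the cardinalities differ by a factor of $e_1+1$.

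A second omission is the diagonal case $d=n/2$ with $e$ odd and $l$ even, which the paper isolates: one proper edge is a diameter fixed by the rotation, the rest splits into two mirror halves on $d-1$ vertices each, and both the evaluation and the count give $\binom{d}{e}\binom{e}{(e-1)/2}\binom{d-e}{l/2}$. Relatedly, your vanishing criterion ``$m\mid 2e$ and $m\mid l$'' is too weak for even $m\geq 4$ (e.g.\ $m=4$, $e\equiv 2\pmod 4$ satisfies $m\mid 2e$ but the polynomial is zero since $\Cat[q]{e}$ vanishes there); the correct generic condition is $m\mid e$ and $m\mid l$, with the diagonal case as the sole exception.
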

\begin{proof}
Let $\xi$ be a primitive $n^\thsup$ root of unity and let $d\mid n$.
Write $e=e_1(n/d)+e_0$ and $l=l_1(n/d)+l_0$ for the unique natural numbers $e_1, e_0, l_1, l_0$ such
that $0 \leq e_0 <n/d$ and $0 \leq l_0 < n/d$. Using \cref{thm:q-Lucas} (the $q$-Lucas theorem) repeatedly, we get
\[
\qNCC[\xi^d]{n}{e}{l} =
\begin{cases}
\binom{n}{2e} \Cat{e} \binom{n-2e}{l} & \text{if }d=n,\\[1em]
\binom{d}{2e_1}\binom{2e_1}{e_1}\binom{d-2e_1}{l_1} & \text{if } e_0=0 \text{ and } l_0=0,\\[1em]
\binom{d}{e}\binom{e}{e_1}\binom{d-e}{l_1} & \text{if } d=n/2, \ e_0=1 \text{ and } l_0=0, \\[1em]
0 & \text{otherwise.}
\end{cases}
\]
We prove that these evaluations agree with the number of
fixed points in $\NCC(n+1,e,l)$ under $\rot_n^d$ on a case-by-case basis.

\noindent
\textbf{Case $d=n$}: Trivial.

	\noindent
	\textbf{Case $e_0=0$ and $l_0=0$}:
	A $(1,2)$-configuration that is fixed by $\rot_n^d$ is completely
	determined by its first $d$ vertices. Among these $d$ vertices, there must $2e(d/n)=2e_1$ vertices
	that are incident to an edge and $l(d/n)=l_1$ loops.
	There are $\binom{d}{2e_1}$ ways to choose $2e_1$ from the first $d$ vertices.
	The number of ways to arrange these edges in an admissible way is equal to
	the number of perfect matchings that are invariant when rotating $2e_1$ steps.
	By \cref{bij:BwToNCM}, we know that there are $\binom{2e_1}{e_1}$ such matchings.
	Lastly, choose $l_1$ loops among the remaining $d-2e_1$ vertices
	in $\binom{d-2e_1}{l_1}$ ways.
	These choices are all independent and the desired result follows.

	\noindent
	\textbf{Case $d=n/2$, $e_0=1$ and $l_0=0$}:
	Such a $(1,2)$-configuration must have a diagonal (an edge from $i$ to $i+d$) that
	splits the $(1,2)$-configuration into two halves.
	The diagonal can be chosen in $d$ ways. The $(1,2)$-configuration is now
	determined uniquely by one of its halves. Such a half must have $d-1$ vertices
	with $(e-1)/2=e_1$ edges and $l_1$ loops. Choose the $2e_1$ vertices that are
	incident to an edge from the $d-1$ vertices in $\binom{d-1}{2e_1}$ ways.
	The number of the ways to arrange these edges in an admissible way is
	equal to the number of non-crossing perfect matchings on $2e_1$ vertices,
	namely $\Cat{e_1}$. Finally, choose $l_1$ loops from the remaining $d-e$ vertices
	in $\binom{d-e}{l_1}$ ways. Since these choices are independent,
	the number of fixed points is given by
	\[
	 d \binom{d-1}{2e_1}\Cat{e_1}\binom{d-e}{l_1} = \binom{d}{e}\binom{e}{e_1}\binom{d-e}{l_1}
	\]
	where equality follows from some simple manipulations of binomial coefficients.

	\noindent
	\textbf{The remaining cases}:
	Suppose that $P \in \NCC(n+1, e, l)$ is invariant under $\rot_n^d$,
	where $d \neq n$. There are $l/(n/d)$ loops among the first $d$ vertices, so if $l_0 \neq 0$, there cannot be such a $P$.
	Hence assume that $l_0 = 0$. If $d \neq n/2$ and $e_0 \neq 0$, then for each edge $ij$
	in $P$ there must be edges $(i+d)(j+d),(i+2d)(j+2d),\dotsc, (i+n-d)(j+n-d)$ in $P$
	(where addition is taken modulo $n$). Hence the number of edges must be
	a multiple of $n/d$ which cannot be the case if $e_0 \neq 0$.

This exhausts all possibilities and thus the proof is complete.
\end{proof}

Recall that $\NCC(n+1, k)$ is the set of non-crossing $(1,2)$-configurations $P$ on $n$
vertices such that the number of loops plus proper edges of $P$ is equal to $k-1$.
In other words,
\[
	\NCC(n+1, k) = \bigcup_{i=0}^{k-1} \NCC(n+1,i,k-1-i).
\]
By applying \cref{lem:refinementOfqNarayana}, we obtain the following result.
\begin{corollary}\label{cor:thielRefinementNarayana}
For every $n, k\in \setN$ such that $0 \leq k \leq n+1$,
\[
	\left( \NCC(n+1,k), \langle \rot_n \rangle, \Nar[q]{n+1}{k} \right)
\]
exhibits the cyclic sieving phenomenon.
\end{corollary}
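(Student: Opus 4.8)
The plan is to deduce the statement immediately from \cref{thm:thielRefinement} and \cref{lem:refinementOfqNarayana}, by realizing the claimed triple as a ``sum'' of the already-established refined triples; that is, by showing that the family $\{(\NCC(n+1,e,l),\langle\rot_n\rangle,\qNCC[q]{n}{e}{l})\}_{e+l=k-1}$ refines $(\NCC(n+1,k),\langle\rot_n\rangle,\Nar[q]{n+1}{k})$ in the sense of the definition given in the introduction.

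First I would record the set-level decomposition
\[
\NCC(n+1,k) = \bigsqcup_{e+l=k-1}\NCC(n+1,e,l),
\]
which is the displayed identity preceding the statement; it is genuinely a disjoint union because an element of $\NCC(n+1)$ has a well-defined number $e$ of proper edges and number $l$ of loops. Next I would observe that $\rot_n$ merely relabels the cyclically arranged vertices, hence sends proper edges to proper edges and loops to loops, so $\rot_n$ — and therefore every power $\rot_n^d$ — preserves the pair $(e,l)$ and restricts to an action on each block $\NCC(n+1,e,l)$.

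Given these two observations, for any $d$ the fixed-point set of $\rot_n^d$ on $\NCC(n+1,k)$ is the disjoint union of its fixed-point sets on the blocks, so
\[
|\{x\in\NCC(n+1,k):\rot_n^d\cdot x=x\}| = \sum_{e+l=k-1}|\{x\in\NCC(n+1,e,l):\rot_n^d\cdot x=x\}|.
\]
By \cref{thm:thielRefinement}, each summand on the right equals $\qNCC[\xi^d]{n}{e}{l}$, where $\xi$ is a primitive $n^\thsup$ root of unity; then \cref{lem:refinementOfqNarayana}, applied with $k-1$ in place of $k$ (so that the index set is $e+l=k-1$ and $\Nar[q]{n+1}{(k-1)+1}=\Nar[q]{n+1}{k}$), shows that this sum is exactly $\Nar[\xi^d]{n+1}{k}$. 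This verifies \eqref{eq:cspDef} for the triple in question.

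There is essentially no obstacle beyond careful bookkeeping of indices: all of the combinatorial content lives in \cref{thm:thielRefinement}, and all of the polynomial content in the $q$-Vandermonde computation behind \cref{lem:refinementOfqNarayana}. The only point requiring attention is the off-by-one between the $(e,l)$ count and the Narayana parameter $k$, i.e.\ that $\NCC(n+1,k)$ collects the configurations with $k-1$ edges-and-loops.
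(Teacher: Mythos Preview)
Your proposal is correct and is exactly the argument the paper has in mind: combine the disjoint-union decomposition of $\NCC(n+1,k)$ with \cref{thm:thielRefinement} on each piece and then sum the polynomials via \cref{lem:refinementOfqNarayana}. The paper simply states ``By applying \cref{lem:refinementOfqNarayana}, we obtain the following result,'' and your write-up spells out precisely that deduction.
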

There is already a known instance of the cyclic sieving phenomenon with
the $q$-Narayana numbers as the polynomial, namely that of non-crossing partitions with a
fixed number of blocks and where the group action is
rotation \cite[Thm 7.2]{ReinerStantonWhite2004}.
Note, however, that in \cref{cor:thielRefinementNarayana} the cyclic
group has a different order than the one with non-crossing partitions.

\begin{remark}
	We cannot hope to find a refinement of the above CSP result involving the Kreweras numbers as in \cite{ReinerSommers2018}.
	For example, consider $n=4$ and $k=2$. There are two partitions of $n$ into $k$ parts, namely $(3,1)$ and $(2,2)$.
	There are $4$ non-crossing partitions with parts
	given by $(3,1)$ and $2$ non-crossing partitions with parts given by $(2,2)$.
	But $\NCC(4,2)$ has two orbits under rotation, both of size $3$.
\end{remark}

\section{Case \texorpdfstring{$s=n$}{s=n} and type \texorpdfstring{$B$}{B} Catalan numbers}\label{sec:s=n}

In this section, we prove several instances of the CSP, related to type $B$ Catalan numbers.
We first consider a $q$-Narayana refinement on non-crossing matchings.
In the subsequent subsection, we consider a cyclic descent refinement on binary words.
Finally, in the last subsection we prove a type $B$ analog of \cref{thm:thielRefinement}.

\subsection{Type \texorpdfstring{$B$}{B} Narayana CSP}

A \defin{type $B$ non-crossing partition} of size $n$ is a non-crossing
partition of $\{1,\dotsc,n,n+1,\dotsc, 2n \}$ which is preserved under a half-turn rotation.
These were first defined by Reiner in \cite{Reiner1997}.
We let this set be denoted $\mdefin{\NCP^B(n)}$ and
let $\rotB_n$ denote the action on $\NCP^B(n)$ by rotation of $\pi/n$.
Note that we only need to make a half-turn before arriving at the initial position.

\begin{proposition}
The triple
\[
 \left( \NCP^B(n), \langle \rotB_n \rangle, \qbinom{2n}{n} \right)
\]
is a CSP triple.
\end{proposition}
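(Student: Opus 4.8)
The plan is the standard combinatorial route to a CSP: evaluate the polynomial at roots of unity with the $q$-Lucas theorem, and match the values against fixed-point counts. First I would note that $\rotB_n^{\,n}$ is the half-turn of the $2n$-gon, which fixes every element of $\NCPB(n)$ by definition, so $\langle\rotB_n\rangle$ is a cyclic group of order $n$ acting on $\NCPB(n)$. By \cref{lem:cspSimple2} it then suffices to compute the number of $\rotB_n^{\,e}$-fixed points for each divisor $e\mid n$, and to compare it with the corresponding evaluation of $\qbinom{2n}{n}$. For the polynomial side, if $\xi$ is a primitive $n^\thsup$ root of unity and $e=\gcd(n,d)$, then $\xi^d$ is a primitive $(n/e)^\thsup$ root of unity, and since $2n=(2e)(n/e)$ and $n=e\,(n/e)$, \cref{thm:q-Lucas} gives
\[
\qbinom{2n}{n}[\xi^d] = \binom{2e}{e}\,\qbinom{0}{0}[\xi^d] = \binom{2e}{e}.
\]
In particular this value is an integer depending only on $e=\gcd(n,d)$, which legitimises the reduction to divisors of $n$, and for $e=n$ it specialises to $\binom{2n}{n}=f(1)$.

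It then remains to prove the fixed-point count: for each $e\mid n$, the number of non-crossing partitions of $[2n]$ invariant under rotation by $e$ steps equals $\binom{2e}{e}$. (Such invariance automatically forces invariance under the half-turn $\rotB_n^{\,n}=(\rotB_n^{\,e})^{n/e}$, so these are precisely the $\rotB_n^{\,e}$-fixed points of $\NCPB(n)$.) The approach I would take is to construct a ``folding modulo $2e$'' bijection from these configurations onto $\NCPB(e)$, the half-turn-invariant non-crossing partitions of the $2e$-gon; since $|\NCPB(e)|=\CatB{e}=\binom{2e}{e}$ by \cite{Reiner1997}, this yields the count. Concretely one identifies each vertex of $[2n]$ with its residue in $[2e]$, pushes the block structure forward, and takes the transitive closure; the image is half-turn-symmetric because $e$ steps of the $2n$-gon project to $e$ steps of the $2e$-gon. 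The inverse unfolds a half-turn-symmetric partition of $[2e]$ periodically around the $2n$-gon, where the non-crossing structure dictates whether a given small block lifts to a single long block wrapping all the way around or splits into a genuine $\rotB_n^{\,e}$-orbit of disjoint copies.

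The main obstacle is exactly this last point: showing that folding a $\rotB_n^{\,e}$-invariant non-crossing partition can never create a crossing, and that the unfolding is well defined and a two-sided inverse. Intuitively, the $2e$-periodicity imposed by $\rotB_n^{\,e}$-invariance rules out folded crossings --- a crossing downstairs would lift to one upstairs --- but turning this into a proof requires a careful case analysis separating blocks that are setwise fixed by a rotation from those in nontrivial orbits, together with a description of which small blocks may ``merge around'' when lifted. If one prefers to sidestep this, an alternative is to invoke the known enumeration of rotation-invariant non-crossing partitions (equivalently, the type $B_n$ instance of cyclic sieving for non-crossing partitions of reflection groups, cf.\ \cite{ArmstrongStumpThomas2013}), which supplies the value $\binom{2e}{e}$ directly and completes the proof.
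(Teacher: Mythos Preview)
Your proposal is correct in outline, but it takes a genuinely different route from the paper. The paper does not evaluate $\qbinom{2n}{n}$ at roots of unity or count fixed points directly. Instead it constructs an equivariant bijection: first $\NCPB(n)\to\NCMB(n)$ via \cref{bij:NCPtoNCM}, and then $\NCMB(n)\to\BW(2n,n)$ by reading off, for each of the first $2n$ vertices of the matching on $4n$ vertices, whether its edge is oriented clockwise. Under this composite bijection, $\rotB_n$ corresponds to $\shift_{2n}^2$, and the proof concludes by quoting the classical CSP $(\BW(2n,n),\langle\shift_{2n}\rangle,\qbinom{2n}{n})$ from \cite{ReinerStantonWhite2004} together with the observation that restricting a CSP on an even-length cyclic group to its index-$2$ subgroup is automatic from the definition.

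The trade-off is clear. The paper's transfer argument is short and avoids any fixed-point enumeration, but it relies on knowing the right chain of equivariant bijections and on an external CSP. Your approach is self-contained on the polynomial side (the $q$-Lucas step is clean), but the fixed-point side is where all the difficulty hides: the folding map you describe is delicate precisely because rotation-invariant blocks can wrap, and you are right that making the inverse well defined requires real work. Your fallback of citing \cite{ArmstrongStumpThomas2013} (or \cite{AthanasiadisReiner2004}) for the count $\binom{2e}{e}$ is legitimate and would close the argument, but at that point you are essentially invoking a result equivalent in strength to the proposition itself.
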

\begin{proof}
There are many ways to prove this. For example, $\NCP^B(n)$ can first be put in bijection
with type $B$ non-crossing matchings, which are non-crossing matchings on $4n$
vertices that are symmetric under a half-turn, by using \cref{bij:NCPtoNCM}.

We then consider the first $2n$ new vertices,
and for each vertex $u$, we record a $1$ if the edge $u \to v$
is oriented clockwise, and $0$ otherwise.
This is a binary word of length $2n$ with $n$ ones.
Furthermore, $\rotB_n$ of the non-crossing partition corresponds
to $\shift^2_{2n}$ on the binary word. The triple
$\left(\BW(2n,n), \langle \shift_{2n} \rangle, \qbinom{2n}{n}\right)$
exhibits the CSP (see \cite[Prop.~4.4]{ReinerStantonWhite2004}), so it is direct
from the definition of cyclic sieving that replacing $\shift_{2n}$ by
$\shift^2_{2n}$ also gives a CSP triple.
\end{proof}

We shall now consider Narayana refinements of type $B$
non-crossing partitions and non-crossing matchings.
First, we introduce the following polynomial:
\begin{equation}\label{eq:BNCP}
 \mdefin{\Pi_{n}(q;t)} \coloneqq \sum_{j=0}^{n} q^{j^2}\qbinom{n}{j} \left(
 t^{2j} q^{n-j} \qbinom{n-1}{j-1}
 +
 t^{2j+1} \qbinom{n-1}{j}\right).
\end{equation}
Note that by using the $q$-Pascal identity,
$\Pi_n(q;1) = \sum_{j} q^{j^2} \qbinom{n}{j}[2] = \qbinom{2n}{n}$, so the sum of the polynomials
\[
[t^0]\Pi_n(q;t), \quad
[t^1]\Pi_n(q;t), \quad
\dotsc,\quad
[t^{2n}]\Pi_n(q;t)
\]
refines the type $B$ $q$-Catalan numbers.
With the polynomial formulated, cyclic sieving is
easy to prove by following the proof of \cite[Thm 7.2]{ReinerStantonWhite2004}.
As a side note, the coefficients of the polynomial at $q=1$ are given by the OEIS entry \oeis{A088855}.

\begin{proposition}\label{prop:refinedTypeBNCP}
Let $n,k\geq 0$ be integers. Then
\[
 \left( \{ P \in \NCP^B(n): \blocks(P) = k \},
 \langle \rotB_n \rangle,
 [t^k] \Pi_{n}(q;t)
  \right),
\]
and
\begin{equation}\label{eq:TypeB-NCM-NarayanaBlock}
 \left( \{ P \in \NCP^B(n): 2k\leq \blocks(P) \leq 2k+1 \},
 \langle \rotB_n \rangle, q^{k^2} \qbinom{n}{k}[2]
  \right)
\end{equation}
exhibit the cyclic sieving phenomenon.

Moreover, for every $n\geq 1$ and $k$, $0 \leq k \leq n$,
\[
 \left( \{ M \in \NCM^B(n): \even(M) = k-1 \},
 \langle \rotB_n \rangle,
 [t^k] \Pi_{n}(q;t)
  \right)
\] and
\begin{equation}
 \left( \{ M \in \NCM^B(n): 2k-1\leq \even(M) \leq 2k \},
 \langle \rotB_n \rangle,
 q^{k^2} \qbinom{n}{k}[2]
 \right)
\end{equation}
exhibit the cyclic sieving phenomenon.
\end{proposition}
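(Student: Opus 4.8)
The plan is to follow the proof of \cite[Thm.~7.2]{ReinerStantonWhite2004}: transfer the matching statements to the partition statements by an equivariant bijection, identify the $\rotB_n$-fixed non-crossing partitions with rotationally invariant non-crossing partitions of a $2n$-gon so that their number can be read off from the type $A$ Narayana cyclic sieving, and finally match these counts with the values of $[t^k]\Pi_n(q;t)$ at roots of unity by the $q$-Lucas theorem.

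First I would reduce to the partition statements. Restricting the bijection $\NCPtoNCM$ of \cref{bij:NCPtoNCM} to objects symmetric under the half turn gives a bijection $\NCP^B(n)\to\NCM^B(n)$ which intertwines $\rotB_n$ with $\rotB_n$ and, exactly as in \cref{prop:NCMNarayanaCSP}, carries a partition with $k$ blocks to a matching with $k-1$ even edges; hence the two matching triples are the images of the two partition triples under an equivariant bijection and need not be treated separately. For the partition triples, extracting coefficients of $\Pi_n$ gives
\[
[t^{2j}]\Pi_n(q;t)=q^{j^2+n-j}\qbinom{n}{j}\qbinom{n-1}{j-1},\qquad
[t^{2j+1}]\Pi_n(q;t)=q^{j^2}\qbinom{n}{j}\qbinom{n-1}{j},
\]
and the $q$-Pascal identity gives $[t^{2k}]\Pi_n(q;t)+[t^{2k+1}]\Pi_n(q;t)=q^{k^2}\qbinom{n}{k}\qbinom{n}{k}$. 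Since a half-turn-symmetric non-crossing partition of the $2n$-gon has an even number of blocks exactly when no block is fixed by the half turn, and an odd number exactly when there is a unique block fixed by the half turn, the coarse triple for $2k\le\blocks\le 2k+1$ is the ``sum'' of the two refined triples for $\blocks=2k$ and $\blocks=2k+1$; thus it suffices to prove the refined triple
\[
\bigl(\{P\in\NCP^B(n):\blocks(P)=k\},\ \langle\rotB_n\rangle,\ [t^k]\Pi_n(q;t)\bigr).
\]

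Next I would count fixed points. By \cref{lem:cspSimple} and \cref{lem:cspSimple2} it is enough to treat $\rotB_n^d$ with $d\mid n$. Viewing $\NCP^B(n)$ inside the set of all non-crossing partitions of the $2n$-gon, $\rotB_n^d$ is rotation by $d$ steps; since $d\mid n$, any partition invariant under rotation by $d$ is automatically symmetric under the half turn, so the $\rotB_n^d$-fixed elements of $\NCP^B(n)$ with $k$ blocks are precisely the non-crossing partitions of the $2n$-gon with $k$ blocks invariant under rotation by $d$. By \cite[Thm.~7.2]{ReinerStantonWhite2004} there are $\Nar[\omega^d]{2n}{k}$ of these, for $\omega$ a primitive $(2n)^{\thsup}$ root of unity (alternatively one can enumerate them directly by a fundamental-domain argument over $2n/d$ consecutive vertices, distinguishing whether the central block is present, as in \cite{ReinerStantonWhite2004}). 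It then remains to verify, for each $d\mid n$ and $\omega$ a primitive $(2n)^{\thsup}$ root of unity, the purely $q$-algebraic identity $[t^k]\Pi_n(\omega^{2d};t)=\Nar[\omega^d]{2n}{k}$; here $\omega^2$ is a primitive $n^{\thsup}$ root of unity, which is what cyclic sieving with the order-$n$ group $\langle\rotB_n\rangle$ requires. This identity is a routine application of the $q$-Lucas theorem (\cref{thm:q-Lucas}) together with \cref{lem:qFraction}, split into cases according to $\order(\omega^{2d})$ and the residues of $j$, $n$ and $k$ modulo it, in close parallel with the evaluations in \cite{ReinerStantonWhite2004} and in the proof of \cref{thm:thielRefinement}; its $d=n$ specialization also yields $[t^k]\Pi_n(1;t)=|\{P\in\NCP^B(n):\blocks(P)=k\}|$, as needed.

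The step I expect to be the main obstacle is this final comparison: matching the two families of terms of $\Pi_n(q;t)$ (those in even and in odd powers of $t$, the former carrying an extra factor $q^{n-j}$) against the two structural types of half-turn-symmetric non-crossing partition, and disposing of the low-order exceptional cases, above all $\order(\omega^{2d})=2$ and its associated diameter configurations, in the spirit of the corresponding cases in the proofs of \cref{thm:PMrefinedCSP} and \cref{thm:newCatalanCSP}. Once a normal form for $d$-periodic half-turn-symmetric non-crossing partitions is fixed, the remaining identities are mechanical and can, if needed, be confirmed with a computer algebra system as in the proof of \cref{thm:PMrefinedCSP}.
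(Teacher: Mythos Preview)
Your proposal is correct and follows essentially the same strategy as the paper: reduce the matching statements to the partition statements via $\NCPtoNCM$, identify the $\rotB_n^d$-fixed type $B$ non-crossing partitions (for $d\mid n$) with the $d$-rotationally-invariant type $A$ non-crossing partitions of the $2n$-gon, and finish with a $q$-Lucas evaluation. The only packaging difference is that the paper quotes the explicit Athanasiadis--Reiner fixed-point counts $\tfrac{dr}{n}\binom{n/d}{r}^2$ and $\tfrac{n-dr}{n}\binom{n/d}{r}^2$ directly and matches them against $[t^k]\Pi_n$, whereas you invoke \cite[Thm.~7.2]{ReinerStantonWhite2004} as a black box and are left with the equivalent identity $[t^k]\Pi_n(\omega^{2d};t)=\Nar[\omega^d]{2n}{k}$.
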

\begin{proof}
Everything is trivial unless $1 \leq k \leq n$, so we assume this holds. Using  \cref{bij:NCPtoNCM} the first two statements are equivalent to the
last two, so we only need to prove the former.
The number of half-turn symmetric non-crossing
partitions with $2k$ blocks is $\frac{k}{n}\binom{n}{k}^2$ and with $2k+1$ blocks $\frac{n-k}{n}\binom{n}{k}^2$.
This can be proven in different ways, but it suffices to refer to \cite[Lem.~4.4]{AthanasiadisReiner2004}.

Divide the numbers into $2d$ intervals $t\frac nd+1,\dotsc, (t+1)\frac nd$, $t\in\{0,\dotsc, 2d-1\}$. If a partition $P$ satisfies $\rotB_n^{n/d}(P)=P$,
 a block only contains numbers from two adjacent intervals or it is a central block with numbers from every interval. Let $r$ be the number of blocks
 that contain numbers from the intervals with $t=0$ and $t=1$, but no other. Then the total number
of blocks is $2dr$ or $2dr+1$, the latter if there is also a central block. In the proof of \cite[Thm 7.2]{ReinerStantonWhite2004}, they show that
the number of partitions $P \in \NCP^B(n)$ invariant under
$\rotB_n^{n/d}$ with $2dr$ and $2dr+1$ blocks are
\[
\frac{dr}{n}\binom{n/d}{r}^2 \text{ and }
\frac{n-dr}{n}\binom{n/d}{r}^2\!, \text{ respectively.}
\]
We now evaluate $[t^k] \Pi_{n}(q;t)$ at a primitive
$d^\thsup$ root of unity. The case $d=1$ is trivial.
If $k\neq 0,1 \pmod d$,  $d\ge 2$, then it is clearly zero.
For $k=2dr$, we get
\[
[t^{2dr}]\Pi_{n}(q;t) = q^{(dr)^2+n-dr}\qbinom{n}{dr} \qbinom{n-1}{dr-1}
\]
which by \cref{thm:q-Lucas} (the $q$-Lucas~theorem) becomes
$\binom{n/d}{r}\binom{n/d-1}{r-1}=\binom{n/d}{r}^2\frac{r}{n/d}$,
which is what we want. A similar calculation gives the case $k=2dr+1$.
The expression $q^{k^2} \qbinom{n}{k}[2]$ in \eqref{eq:TypeB-NCM-NarayanaBlock} is just the sum of the two
cases.
\end{proof}
A cyclic sieving result involving type $B$ Kreweras numbers (and thus type $B$ Catalan numbers)
was proven in \cite[Thm.~1.7]{ReinerSommers2018}.
The downside is that the Kreweras numbers in type $B$ are not indexed by usual partitions,
but partitions of $2n+1$, where each even part has even multiplicity.

\subsection{A second refinement of the type $B$ Catalan numbers}

Let $\BW(2n,n)$ be the set of binary words of length $2n$ with exactly $n$ ones.
Define a cyclic descent of a binary word $\sfb = \sfb_1 \sfb_2 \dotsb \sfb_{2n}$ as an
index $i$ such that $\sfb_i > \sfb_{i+1}$, where the indices are taken modulo $2n$.
The number of cyclic descents of $\sfb$ is denoted $\cdes(\sfb)$.
As an example, if $\sfb=\mathsf{0110010111}$, then $\cdes(\sfb)=3$.
For any two natural numbers $n$ and $k$, let $\mdefin{\BW^k(n)} \subset \BW(2n,n)$
consist of all $\sfb \in \BW(2n,n)$ such that $\cdes(\sfb)=k$.
Define
\begin{equation}\label{eq:maj polynomial bin words}
\mdefin{\qBW[q]{n}{k}} \coloneqq \sum_{\sfb \in \BW^k(n)} q^{\maj(\sfb)}.
\end{equation}
At $q=1$, this is \oeis{A335340} in the OEIS and two times \oeis{A103371}.
Note that we have $\qBW[q]{0}{0}=1$ and $\qBW[q]{n}{k}=0$ if $k>n$.
\begin{lemma}\label{lem:typeBSecondNarayana}
For all integers $1 \leq k \leq n$,
\begin{equation}\label{eq:typeBSecondNarayanaPol}
\qBW[q]{n}{k} = q^{k(k-1)}(1+q^n)\qbinom{n}{k}\qbinom{n-1}{k-1}.
\end{equation}
\end{lemma}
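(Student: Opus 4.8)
The plan is to evaluate $\qBW[q]{n}{k}$ by summing $q^{\maj}$ over binary words graded by their \emph{ordinary} descent number, exploiting the fact that a cyclic descent at position $2n$ occurs exactly when $\sfb_{2n}=1$ and $\sfb_1=0$. Splitting $\BW(2n,n)$ according to the pair $(\sfb_1,\sfb_{2n})$, we have $\cdes(\sfb)=\des(\sfb)+1$ when $(\sfb_1,\sfb_{2n})=(0,1)$ and $\cdes(\sfb)=\des(\sfb)$ otherwise, so
\[
\qBW[q]{n}{k}=\sum_{\substack{\des(\sfb)=k-1\\ \sfb_1=0,\ \sfb_{2n}=1}}q^{\maj(\sfb)}\;+\;\sum_{\des(\sfb)=k}q^{\maj(\sfb)}\;-\;\sum_{\substack{\des(\sfb)=k\\ \sfb_1=0,\ \sfb_{2n}=1}}q^{\maj(\sfb)},
\]
where each sum ranges over $\BW(2n,n)$.

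The next step is the pair of auxiliary identities
\[
\sum_{\substack{\sfb\in\BW(2n,n)\\ \des(\sfb)=d}}q^{\maj(\sfb)}=q^{d^{2}}\qbinom{n}{d}^{2},\qquad
\sum_{\substack{\sfb\in\BW(2n,n),\ \des(\sfb)=d\\ \sfb_1=0,\ \sfb_{2n}=1}}q^{\maj(\sfb)}=q^{d(d+1)}\qbinom{n-1}{d}^{2}.
\]
Both follow from the run decomposition: a binary word with exactly $d$ descents is uniquely $0^{p_0}1^{q_0}0^{p_1}1^{q_1}\cdots 0^{p_d}1^{q_d}$ with $q_0,\dots,q_{d-1}\ge 1$ and $p_1,\dots,p_d\ge 1$ (and $p_0,q_d\ge 0$ in general, while $p_0,q_d\ge 1$ under the constraint $\sfb_1=0$, $\sfb_{2n}=1$); the $i$-th descent sits at position $\sum_{j\le i}(p_j+q_j)$, so $\maj(\sfb)=\sum_{i=0}^{d-1}(d-i)(p_i+q_i)$. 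Subtracting $1$ from each part that is forced positive and summing over the $p$'s and the $q$'s separately yields a product of two $q$-binomial coefficients, while the shifts contribute $q^{\binom d2+\binom{d+1}2}=q^{d^2}$ in the first case and $q^{2\binom{d+1}2}=q^{d(d+1)}$ in the second. (The first identity is classical; summing it over $d$ recovers $\qbinom{2n}{n}$ via \cref{thm:q-vandermonde}.)

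Plugging these into the expression for $\qBW[q]{n}{k}$ gives
\[
\qBW[q]{n}{k}=q^{k(k-1)}\qbinom{n-1}{k-1}^{2}+q^{k^{2}}\qbinom{n}{k}^{2}-q^{k(k+1)}\qbinom{n-1}{k}^{2},
\]
so the lemma is equivalent to the $q$-binomial identity
\[
\qbinom{n-1}{k-1}^{2}+q^{k}\qbinom{n}{k}^{2}-q^{2k}\qbinom{n-1}{k}^{2}=(1+q^{n})\qbinom{n}{k}\qbinom{n-1}{k-1}.
\]
I would prove this by substituting the $q$-Pascal relation $\qbinom{n}{k}=\qbinom{n-1}{k-1}+q^{k}\qbinom{n-1}{k}$ on both sides and then normalizing with $(1-q^{k})\qbinom{n-1}{k}=(1-q^{n-k})\qbinom{n-1}{k-1}$ (a rearrangement of $[k]_q\qbinom{n-1}{k}=[n-k]_q\qbinom{n-1}{k-1}$); after these substitutions both sides reduce to the common expression $(1+q^{k})\qbinom{n-1}{k-1}^{2}+(q^{2k}+q^{n+k})\qbinom{n-1}{k-1}\qbinom{n-1}{k}$.

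The only step requiring real care is the bookkeeping in the first two paragraphs: correctly relating the cyclic and ordinary descent numbers, and handling the extra endpoint conditions $p_0,q_d\ge 1$ in the constrained run decomposition. The remainder is routine $q$-algebra. The extreme cases $k=n$ (where $\sfb$ must be $0101\cdots01$ or $1010\cdots10$, giving $q^{n(n-1)}+q^{n^2}$) and $k=1$ provide quick consistency checks.
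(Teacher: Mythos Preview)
Your proof is correct and follows essentially the same approach as the paper: both split $\BW^k(n)$ according to whether the word is ``elevated'' (your condition $\sfb_1=0,\ \sfb_{2n}=1$) to obtain the identical three-term expression $q^{k(k-1)}\qbinom{n-1}{k-1}^2+q^{k^2}\qbinom{n}{k}^2-q^{k(k+1)}\qbinom{n-1}{k}^2$, and then simplify. The only differences are in presentation: you derive the two auxiliary generating functions directly via the run decomposition and finish with a $q$-Pascal substitution, whereas the paper invokes the type~$B$ Narayana formula~\eqref{eq:B Narayana formula} together with the bijection between elevated words of length $2n$ and words of length $2n-2$, and for the final algebraic step factors out $q^{k(k-1)}\qbinom{n}{k}\qbinom{n-1}{k-1}$ and reduces to the polynomial identity $q^k[n]_q^2-q^{2k}[n-k]_q^2+[k]_q^2=(1+q^n)[n]_q[k]_q$, which it obtains from the analogous identity~\eqref{eq: q-integer} already established in the proof of~\eqref{eq:maj-polSYT}.
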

\begin{proof}
The set $\BW^k(n)$ is in bijection with a certain subset of $\PATHS(n)$
which we shall now describe. Call binary words of the form $\sfb=\mathsf{0} \sfb_2 \sfb_3 \dotsb \sfb_{2n-1}\mathsf{1}$ \defin{elevated} and call binary words that are not elevated \defin{non-elevated}
(so a binary word is elevated if $\cdes(\sfb)=\des(\sfb)+1$).
Elevated binary words in $\PATHS(n)$ are in
natural bijection with paths in $\PATHS(n-1)$ by letting the
elevated binary word $\mathsf{0} \sfb_2 \sfb_3 \dotsb \sfb_{2n-1} \mathsf{1}$
correspond to the binary word $\sfb_2 \sfb_3\dotsb \sfb_{2n-1}$.

It follows that a word in $\BW^k(n)$ corresponds either to a
non-elevated path in $\PATHS(n)$ with $k$ valleys or to an elevated
path in $\PATHS(n)$ with $k-1$ valleys.
Using this correspondence and \eqref{eq:B Narayana formula},
one gets that
\begin{equation}\label{eq:differenceOfPathByPeaks}
\qBW[q]{n}{k}= q^{k^2}\qbinom{n}{k}[2] - q^k \cdot q^{k^2}\qbinom{n-1}{k}[2]
			+ q^{k-1}\cdot q^{(k-1)^2}\qbinom{n-1}{k-1}[2].
\end{equation}
Here, the factors $q^k$ and $q^{k-1}$ appear since by translating
a binary word $\mathsf{c} \in \BW(2(n-1),n-1)$ with $k$ descents into its corresponding
elevated binary word $\mathsf{c}'$ in $\BW(2n,n)$, we have $\maj(P')-\maj(P)=k$
as each descent of $\mathsf{c}'$ contributes one more to $\maj$ than in $\mathsf{c}$.

It remains to show that the expression in \eqref{eq:differenceOfPathByPeaks}
coincides with the one in \eqref{eq:typeBSecondNarayanaPol}.
To do this, we rewrite
\begin{align*}
&q^{k^2}\qbinom{n}{k}[2] - q^k \cdot q^{k^2}\qbinom{n-1}{k}[2]  + q^{k-1}\cdot q^{(k-1)^2}\qbinom{n-1}{k-1}[2] \\
=\ &q^{k(k-1)} \qbinom{n}{k}\qbinom{n-1}{k-1}\left( q^k \frac{[n]_q}{[k]_q}-q^{2k} \frac{[n-k]_q^2}{[n]_q[k]_q}+\frac{[k]_q}{[n]_q}\right) \\
=\ &q^{k(k-1)} \qbinom{n}{k} \qbinom{n-1}{k-1} \left( \frac{q^k[n]_q^2-q^{2k}[n-k]_q^2 + [k]_q^2}{[n]_q[k]_q}\right).
\end{align*}
It is therefore sufficient to show that the expression inside
the parentheses is equal to $1+q^n$ or, equivalently, that the following equation holds:
\begin{equation*}\label{eq: q-integer2}
q^k[n]_q^2-q^{2k}[n-k]_q^2+[k]_q^2=(1+q^n)[n]_q[k]_q.
\end{equation*}
This equation can be derived from \eqref{eq: q-integer} by
adding $q^{k+n-1}[n]_q+q^{2k-1}[n-k]_q+q^{k-1}[k]_q=q^{k-1}(1+q^n)[n]_q$
to each side of the equation. This concludes the proof.
\end{proof}

The number of cyclic descents of a binary word is clearly invariant under
cyclic shifts of the word so one has a group action of $\rot_{2n}$ on $\BW^k(n)$.
The following proposition follows from \cite[Cor.~1.6]{AhlbachSwanson2018},
although they do not compute the closed-form expression of \cref{eq:maj polynomial bin words}.

\begin{proposition}\label{prop:typeBCdesCSP}
For all $n, k \in \setN$ such that $1 \leq k \leq n$, the triple
\[
\left( \BW^k(n), \shift_{2n}, \qBW[q]{n}{k} \right)
\]
exhibits the cyclic sieving phenomenon.
\end{proposition}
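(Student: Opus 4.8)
The plan is to verify the cyclic sieving condition \eqref{eq:cspDef} directly, comparing fixed-point counts of $\shift_{2n}^d$ with evaluations of $\qBW[q]{n}{k}$ at roots of unity. Fix a primitive $(2n)^\thsup$ root of unity $\xi$. By \cref{lem:cspSimple} and \cref{lem:cspSimple2} it suffices to show, for every divisor $d$ of $2n$, that the number of $\sfb \in \BW^k(n)$ with $\shift_{2n}^d(\sfb)=\sfb$ equals $\qBW[\xi^d]{n}{k}$; I will use the closed form $\qBW[q]{n}{k} = q^{k(k-1)}(1+q^n)\qbinom{n}{k}\qbinom{n-1}{k-1}$ from \cref{lem:typeBSecondNarayana} throughout.

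First I would count the fixed points. A word $\sfb \in \BW(2n,n)$ with $\shift_{2n}^d(\sfb)=\sfb$ is periodic with period dividing $d$, hence is the $(2n/d)$-fold concatenation of a length-$d$ block $\sfc$; it has exactly $n$ ones iff $\sfc$ has $d/2$ ones, which is possible only when $d$ is even, so for odd $d$ there are no fixed points. When $d$ is even, set $m=d/2$; since a cyclic descent of $\sfc$ (viewed cyclically) produces exactly $2n/d$ cyclic descents of $\sfb$, we get $\cdes(\sfb)=(2n/d)\cdot\cdes(\sfc)$, so there are no fixed points unless $(2n/d)\mid k$, and otherwise the fixed points are in bijection with $\BW^{k_1}(m)$ where $k_1 = kd/(2n)$. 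By \cref{lem:typeBSecondNarayana} at $q=1$, this last set has $2\binom{m}{k_1}\binom{m-1}{k_1-1}$ elements (read as $0$ outside the range $1\le k_1\le m$).

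Next I would evaluate $\qBW[\xi^d]{n}{k}$. Since $\xi^n$ has order $2$ we have $\xi^n=-1$, so $(\xi^d)^n=(-1)^d$ and the factor $1+q^n$ vanishes at $q=\xi^d$ for odd $d$, forcing $\qBW[\xi^d]{n}{k}=0$ in agreement with the fixed-point count. For even $d$, put $\delta=\order(\xi^d)=2n/d$; then $\delta\mid n$ with $n/\delta=m$ and $n-1=(m-1)\delta+(\delta-1)$, so the $q$-Lucas theorem (\cref{thm:q-Lucas}) gives $\qbinom{n}{k}[\xi^d]=\binom{m}{k_1}$ when $\delta\mid k$ with $k=k_1\delta$ (and $0$ otherwise) and $\qbinom{n-1}{k-1}[\xi^d]=\binom{m-1}{k_1-1}$. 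Together with $(\xi^d)^{k(k-1)}=1$ (because $(\xi^d)^\delta=\xi^{2n}=1$) and $1+(\xi^d)^n=2$, this yields $\qBW[\xi^d]{n}{k}=2\binom{m}{k_1}\binom{m-1}{k_1-1}$, which matches the fixed-point count in every case.

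The computations are routine; the only genuine subtlety, and in a sense the conceptual content, is the odd-$d$ case, where combinatorially a binary word of odd period cannot have exactly half its entries equal to $1$, and algebraically this is mirrored by the vanishing of $1+q^n$ at odd powers of $\xi$. Alternatively one can bypass the fixed-point bookkeeping entirely: the asserted CSP is the special case of the cyclic sieving for cyclic descents of binary words established by Ahlbach and Swanson \cite[Cor.~1.6]{AhlbachSwanson2018}, and it then only remains to recognize their $q$-enumerator as the closed form computed in \cref{lem:typeBSecondNarayana}.
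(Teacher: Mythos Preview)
Your proof is correct. The paper does not actually give a proof of this proposition; it simply states that the result follows from \cite[Cor.~1.6]{AhlbachSwanson2018}, which you yourself mention as an alternative at the end. Your direct verification via the $q$-Lucas theorem and the closed form of \cref{lem:typeBSecondNarayana} is therefore more than the paper provides, and the computations (periodicity forcing $d$ even, the $\delta\mid k$ condition, and the evaluation $2\binom{m}{k_1}\binom{m-1}{k_1-1}$) all check out.
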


\subsection{Type \texorpdfstring{$B$}{B} non-crossing configurations with a twist}\label{sec:typeBNCC}

Recall that $\NCC(n+1)$ denotes the set of non-crossing $(1,2)$-configurations on $n$ vertices.
We shall now modify this family slightly.

\begin{definition}
Let $\mdefin{\NCC^B(n)}$ be the set  of non-crossing $(1,2)$-configurations on $n-1$ vertices,
with the extra option that one of the proper edges may be marked.
We let $\mdefin{\NCC^B(n,e,l)}\subset \NCC^B(n)$ be the subset
with exactly $e$ proper edges, and $l$ loops.
Finally, let $\NCC^B(n,k)$ be the subset of $\NCC^B(n)$ with $k$ edges and loops,
i.e.
\[
 \mdefin{\NCC^B(n,k)} \coloneqq \bigcup_{e+l = k} \NCC^B(n,e,l).
\]
\end{definition}

It follows directly from the definition that
$|\NCC^B(n,e,l)|=(e+1)|\NCC(n,e,l)|$ and it is not difficult to
sum over all possible $e,l$ to prove that $|\NCC^B(n+1,k)| = \NarB{n}{k} = \binom{n}{k}^2$.

\begin{theorem}\label{thm:typeB-NCC-Twist}
We let $\twist_{2n}$ act on $\NCC^B(n+1)$ as before (the marked edge is also rotated),
which gives an action of order $2n$.
Then
\begin{equation}
 \left( \NCC^B(n+1), \langle \twist^2_{2n} \rangle , \qbinom{2n}{n} \right)
\end{equation}
is a CSP triple.
\end{theorem}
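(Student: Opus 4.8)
The plan is a direct fixed-point count that reuses the computations in the proof of \cref{thm:newCatalanCSP}. Since $\langle \twist_{2n}^2\rangle$ has order $n$, by \cref{lem:cspSimple} and \cref{lem:cspSimple2} it suffices to show, for each $j\mid n$, that the number of elements of $\NCC^B(n+1)$ fixed by $\twist_{2n}^{2j}$ equals $\qbinom{2n}{n}$ evaluated at a primitive $(n/j)$-th root of unity. Writing $e=n/j$, the $q$-Lucas theorem (\cref{thm:q-Lucas}) gives $\qbinom{2n}{n}[\xi]=\binom{2n/e}{n/e}=\binom{2j}{j}$ for a primitive $e$-th root of unity $\xi$, so the target is exactly $\binom{2j}{j}$ for every $j\mid n$.

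An element of $\NCC^B(n+1)$ is a pair $(C,M)$ with $C\in\NCC(n+1)$ and $M$ either the symbol ``no marked edge'' or a proper edge of $C$, and $\twist_{2n}^{2j}$ acts coordinatewise. Hence
\[
\bigl|\{x\in\NCC^B(n+1):\twist_{2n}^{2j}x=x\}\bigr|=\sum_{C\,:\,\twist_{2n}^{2j}C=C}\Bigl(1+\#\{\text{proper edges of }C\text{ fixed by }\twist_{2n}^{2j}\}\Bigr).
\]
From the proof of \cref{thm:newCatalanCSP} I will use three facts: $\twist_{2n}^{2j}$ has exactly $\binom{2j}{j}$ fixed points on $\NCC(n+1)$; $\twist_{2n}^{2j}$ shifts vertex labels by $2j\pmod n$; and $\twist_{2n}^{n}$ is the involution toggling the loop/empty status of every isolated vertex. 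When $e$ is even, $(\twist_{2n}^{2j})^{e/2}=\twist_{2n}^{je}=\twist_{2n}^{n}$, so every $\twist_{2n}^{2j}$-fixed configuration is fixed by $\twist_{2n}^{n}$, hence has no isolated vertex, i.e.\ is a non-crossing perfect matching on $n$ vertices on which $\twist_{2n}^{2j}$ is rotation by $2j$.

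Now I would split on $e=n/j$. If $e>2$: a proper edge $\{a,b\}$ fixed by the shift $a\mapsto a+2j$ forces (using $2j\not\equiv 0\pmod n$) $b=a+2j$ and $4j\equiv 0\pmod n$, i.e.\ $e\mid 4$, hence $e=4$; but such an edge is then a diameter of a non-crossing perfect matching on $4j$ vertices, splitting the rest into two arcs of odd size $2j-1$, which is impossible. So the inner sum collapses and the count is $\binom{2j}{j}$. If $e=2$ (so $j=n/2$): $\twist_{2n}^{2j}=\twist_{2n}^{n}$ acts as the identity on each of the $\Cat{n/2}$ non-crossing perfect matchings on $n$ vertices and fixes nothing else, so the count is $\bigl(1+\tfrac n2\bigr)\Cat{n/2}=\bigl(1+\tfrac n2\bigr)\tfrac{1}{n/2+1}\binom{n}{n/2}=\binom{n}{n/2}=\binom{2j}{j}$. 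If $e=1$ (so $j=n$): $\twist_{2n}^{2n}$ is the identity and the count is $|\NCC^B(n+1)|=\binom{2n}{n}=\binom{2j}{j}$. This gives the CSP.

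The main obstacle is the even-$e$ analysis: extracting from \cref{thm:newCatalanCSP} that $\twist_{2n}^{2j}$ shifts labels by $2j$ and $\twist_{2n}^{n}$ flips all loops, combining these via $(\twist_{2n}^{2j})^{e/2}=\twist_{2n}^n$ to force fixed configurations to be perfect matchings, and then excluding fixed proper edges by the odd-arc parity obstruction, while observing that the one genuinely different case $e=2$ is rescued precisely by $(k+1)\Cat{k}=\binom{2k}{k}$. An alternative would be to construct a $\twist_{2n}^2$-equivariant bijection between $\NCC^B(n+1)$ and $\SYT((2n,n)/(n))$ with $\prom_{2n}^2$ and invoke the type $B$ CSP of \cite[Section~3.1]{StrikerWilliams2012}, but verifying such a bijection looks more involved than the direct count above.
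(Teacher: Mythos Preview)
Your proof is correct and follows essentially the same route as the paper: reduce the unmarked contribution to \cref{thm:newCatalanCSP}, rule out a fixed marked edge for $e>2$ by the diameter/odd-arc parity obstruction, and use $(j+1)\Cat{j}=\binom{2j}{j}$ in the $e=2$ case. One wording nitpick: your ``first fact'' that $\twist_{2n}^{2j}$ has $\binom{2j}{j}$ fixed points on $\NCC(n+1)$ holds only for $e>2$ (for $e=2$ the count is $\Cat{j}$), but since you only invoke it in the $e>2$ branch and treat $e=2$ separately with the correct count, the argument is unaffected.
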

\begin{proof}

We compute the number of fixed points under $(\twist^2_{2n})^d$ where we can without
loss of generality assume $d \mid n$.
Write $n = md$. By \cref{thm:q-Lucas},
\[
\qbinom{2n}{n} = \binom{2d}{d}
\]
at a primitive $m^\thsup$ root of unity.
The claim follows from \cref{thm:newCatalanCSP} except in the cases where a
marked edge can appear in a fixed point. Note that in the case $4 \mid n$ and
$2d = n/2$ or $3n/2$ a marked edge would have to split the configuration into
two non-crossing matchings on an odd number of vertices. Hence there cannot be
a marked edge in a fixed point in this case.

The only case left is $n \mid 2d$. First, $2d = 2n$ is trivial. Second,
if $2d = n$, no fixed point can have marked vertices, as is noted in the proof
of \cref{thm:newCatalanCSP}. Hence we only have non-crossing matchings on
$2d$ vertices with one edge possibly marked, the number of which is
$(d+1)\Cat{d} = \binom{2d}{d}$.
\end{proof}
It should be possible to prove \cref{thm:typeB-NCC-Twist} bijectively.

\begin{problem}
Find an equivariant bijection between $\NCC^B(n+1)$ and $\BW(2n, n)$ sending $\twist_{2n}^2$ to $\shift_{2n}^2$.
\end{problem}
Note that the triple in \cref{thm:typeB-NCC-Twist} exhibits the so-called
Lyndon-like cyclic sieving~\cite{AlexanderssonLinussonPotka2019},
which is not intuitively clear (as it is for $\BW(2n, n)$).

%\begin{remark}
% It would be nice if
% \[
% \left(
% \bigcup_{l=0}^n \NCC^B(n+1,e,l), \langle \twist^2_{2n} \rangle ,
% \sum_{l\geq 0} \qNCCB[q]{n}{e}{l} \right)
% \]
% refined the above cyclic sieving phenomenon.
% Unfortunately, this is not the case.
%\end{remark}

\begin{remark}
 \cref{thm:typeB-NCC-Twist} does not hold when only considering $\twist_{2n}$.
 For $n=2$, $\qbinom{2n}{n}$ evaluated at a primitive $4^\thsup$ root of unity gives $0$.
 However, there are $6$ elements in $\NCC^B(3)$, two of which are fixed under $\twist_{4}$;
 consider an edge between vertices $1$ and $2$, which may or may not be unmarked.
 Since there are no loops or isolated vertices, these two elements are fixed.
 Can one modify the $q$-analog of $\binom{2n}{n}$ so that it is compatible with $\twist_{2n}$?
\end{remark}

\begin{problem}
Is it possible to define a refinement $P(n,e,l;q)$ of $\qbinom{2n}{n}$ so that% $\twist^2_{2n}$ works in \cref{thm:typeB-NCC-Twist} and, more generally,
\[
 \left(
 \bigcup_{l=0}^n \NCC^B(n+1,e,l), \langle \twist^2_{2n} \rangle ,
 \sum_{l\geq 0} P(n,e,l;q)\right)
 \] is a CSP triple?
\end{problem}

Unfortunately the polynomials
%\begin{equation}
\[\mdefin{\qNCCB[q]{n}{e}{l}}\coloneqq
 q^{e^2 + n l} [e+1]_q \qbinom{n}{2e} \Cat[q]{e} \qbinom{n-2e}{l}\]
%\end{equation}
do not serve this purpose even though they do satisfy the identities (proof omitted)
 \begin{align*}
  \qNCCB[1]{n}{e}{l} &= |\NCC^B(n+1,e,l)|,\\
 \NarB[q]{n}{k} &= \sum_{e+l=k} \qNCCB[q]{n}{e}{l}.
% \qbinom{2n}{n} &= \sum_{0\leq e,l \leq n } \qNCCB[q]{n}{e}{l}\\
\end{align*}

%\end{lemma}
%\begin{proof} Omitted.
%The second identity follows from the first, so it suffices to show that
%\begin{equation}\label{eq:typeBqNarayanaRefinement}
%  q^{k^2}\qbinom{n}{k} \qbinom{n}{k}
%  = \sum_{e=0}^n q^{e^2 + n (k-e)} [e+1]_q \qbinom{n}{2e} \Cat[q]{e} \qbinom{n-2e}{k-e}.
%\end{equation}
%Expanding the $q$-binomials, we see that the right hand side becomes
%\[
% \sum_{e=0}^n q^{e^2 + n (k-e)}
% [e+1]_q
% \frac{[n]_q!}{[2e]_q![n-2e]_q!}
% \frac{[2e]_q!}{[e+1]_q![e]_q!}
% \frac{[n-2e]_q!}{[k-e]_q![n-e-k]_q!}.
% \]
%Cancelling common factors and rewriting gives
% \[
% \sum_{e=0}^n q^{e^2 + n (k-e)}
% \qbinom{n}{k}
% \qbinom{k}{e}
% \qbinom{n-k}{e}
% =
%q^{k^2}\qbinom{n}{k}
%\sum_{e=0}^n q^{e^2 + n (k-e)-k^2}
%\qbinom{k}{k-e}
% \qbinom{n-k}{e},
%\]
%so it remains to show (after $j\coloneqq k-e$) that
%\[
%\qbinom{n}{k} =
%\sum_{j=0}^k q^{(k-j)^2 + n j-k^2}
%\qbinom{k}{j}
% \qbinom{n-k}{k-j},
%\]
%but this follows from the $q$-Vandermonde identity (\cref{thm:q-vandermonde}),
%by letting $a = n-k$, $b=c=k$.
%\end{proof}

\subsection{Thiel's CSP for type \texorpdfstring{$B$}{B}}\label{sec:typeBThiel}

\begin{theorem}\label{thm:typeB-NCC-rot-CSP}
We let rotation $\rot_n$ act on $\NCC^B(n+1,e,l)$, and let
\[
\qNCC[q]{n}{e}{l} \coloneqq q^{e(e+1)+(n+1)l} \qbinom{n}{2e} \Cat[q]{e} \qbinom{n-2e}{l}.
\]
Then
\begin{equation}\label{eq:CSPTripleTypeBNCC}
 \left( \NCC^B(n+1,e,l), \langle \rot_n \rangle, (1+[e]_q) \qNCC[q]{n}{e}{l}
  \right)
\end{equation}
is a CSP triple.
\end{theorem}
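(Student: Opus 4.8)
The plan is to run the standard combinatorial argument for cyclic sieving. First, $(1+[e]_q)\qNCC[q]{n}{e}{l}\in\setN[q]$, and at $q=1$ it equals $(e+1)\,|\NCC(n+1,e,l)| = |\NCC^B(n+1,e,l)|$. Once we check — as a byproduct of the computation below — that this polynomial takes integer values at all $n^\thsup$ roots of unity, \cref{lem:cspSimple} and \cref{lem:cspSimple2} reduce the claim to verifying, for each $d\mid n$, that the evaluation at $\xi^d$ (with $\xi$ a primitive $n^\thsup$ root of unity) equals the number of $\rot_n^d$-fixed elements of $\NCC^B(n+1,e,l)$.

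On the polynomial side, I would reuse the $q$-Lucas computation from the proof of \cref{thm:thielRefinement} verbatim to obtain $\qNCC[\xi^d]{n}{e}{l}$ in closed form; the only new input is the value of $1+[e]_q$ at $\xi^d$. The point is that the three regimes in which $\qNCC[\xi^d]{n}{e}{l}\neq 0$ align perfectly with this extra factor. Writing $e=e_1(n/d)+e_0$: if $d=n$ then $\xi^d=1$ and $1+[e]_1=e+1$; if $e_0=0$ then $\order(\xi^d)=n/d$ divides $e$, so $[e]_{\xi^d}=0$ and the factor is $1$; and in the remaining nonzero regime $d=n/2$ and $e_0=1$, so $\xi^d=-1$ with $e$ odd, whence $[e]_{-1}=1$ and the factor is $2$. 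In every other case the product is $0$.

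On the fixed-point side, decompose $\NCC^B(n+1,e,l)=\NCC(n+1,e,l)\sqcup\mathcal{M}$, where $\mathcal{M}$ corresponds bijectively to pairs $(P,\gamma)$ with $P\in\NCC(n+1,e,l)$ and $\gamma$ one of its $e$ proper edges. The number of $\rot_n^d$-fixed elements of the first part is supplied by \cref{thm:thielRefinement}, and $(P,\gamma)\in\mathcal{M}$ is fixed iff $P$ is $\rot_n^d$-fixed and $\gamma$ is fixed by the induced action on the edges of $P$. Here lies the geometric heart: an edge joining $a$ to $b$ is fixed by $\rot_n^d$ only if $n\mid 2d$, so for $d\mid n$ with $d<n$ this forces $d=n/2$ and $\gamma$ to be a diameter $\{a,a+n/2\}$; and since two diameters cross, a non-crossing configuration has at most one, so a $\rot_n^{n/2}$-fixed $P$ has exactly $e\bmod 2$ of them (by orbit-parity of its edges under the half-turn). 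Hence the marked part contributes $e\,|\NCC(n+1,e,l)|$ additional fixed points when $d=n$, one further copy of the \cref{thm:thielRefinement} count when $d=n/2$ and $e$ is odd, and nothing otherwise. Comparing: when $d=n$ both sides equal $(e+1)\,|\NCC(n+1,e,l)|$; when $e_0=0$ both sides equal the unmarked count $\qNCC[\xi^d]{n}{e}{l}$; when $d=n/2$ with $e$ odd both sides equal $2\,\qNCC[-1]{n}{e}{l}$; and both vanish in the remaining cases.

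The step I expect to be the main obstacle is the $d=n/2$ bookkeeping: one must confirm that the parity conditions forced by the case split of \cref{thm:thielRefinement} (in particular $l$ even when $n/d=2$) coincide with the vanishing pattern of $(1+[e]_q)\qNCC[q]{n}{e}{l}$ at $-1$, and that the "at most one diameter" fact is strong enough to make the marked contribution an exact doubling, with no further correction from fixed edges. The rest — evaluating the prefactor $q^{e(e+1)+(n+1)l}$ at $-1$ and at $1$, and reconciling the resulting products of binomial coefficients — is routine manipulation.
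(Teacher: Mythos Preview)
Your proposal is correct and follows essentially the same approach as the paper: the same decomposition into unmarked and marked configurations, the same use of \cref{thm:thielRefinement} for the unmarked part, and the same geometric observation that a marked edge fixed by $\rot_n^d$ with $d<n$ must be a diameter (forcing $d=n/2$ and $e$ odd). The only cosmetic difference is that the paper evaluates the marked-part polynomial by first rewriting $[e]_q\,\Cat[q]{e}\qbinom{n}{2e}$ as $\qbinom{n}{2e}\qbinom{2e}{e-1}$ and then applying $q$-Lucas directly, whereas you keep the factorization $(1+[e]_q)\,\qNCC[q]{n}{e}{l}$ and evaluate the prefactor separately; both routes yield the same case split.
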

\begin{proof}
We can split $\NCC^B(n+1,e,l)$ into two sets, the first
set $A$ being the case without a marked edge,
and the second set $B$ the case with a marked edge.
Then it suffices to prove that
\[
 \left(A,  \langle \rot_n \rangle, \qNCC[q]{n}{e}{l} \right)
 \quad
 \text{ and }
 \quad
 \left(B,  \langle \rot_n \rangle, [e]_q \qNCC[q]{n}{e}{l} \right)
\]
are CSP triples.
The first one is already proved in \cref{thm:thielRefinement}.

For the second, consider $\rot_n^d$, and without loss of generality write $n = kd$.
A single marked edge can only appear in a fixed point if $d = n$ or $d = n/2$.
The former is trivial. Now, rewrite the polynomial as
\[
q^{e(e+1)+(n+1)l} \qbinom{n}{2e} \qbinom{2e}{e-1} \qbinom{n-2e}{l},
\]
and apply \cref{thm:q-Lucas}. At a primitive $k^\thsup$ root of unity,
this evaluates to 0 unless $k \mid 2e$, $k \mid e-1$ and $k \mid l$.
The second implies $\gcd(k,e)=1$, so by the first $k \mid 2$. If $k = 2$,
that is $d = n/2$, we get that the number of fixed points should be
\[
\binom{\frac{n}{2}}{e} \binom{e}{\frac{e-1}{2}} \binom{\frac{n}{2}-e}{\frac{l}{2}}.
\]
This is indeed the case. The marked edge has to split the configuration into two
symmetric parts, and connects $i$ to $i + n/2$ for some $1 \le i \le n/2$.
The symmetric configurations are on $n/2$ - 1 vertices, and have $(e-1)/2$ edges
and $l/2$ marked vertices each. The number of fixed points is hence
\[
\frac{n}{2}\binom{\frac{n}{2} - 1}{e-1}\Cat{(e-1)/2}\binom{\frac{n}{2}-1-(e-1)}{\frac{l}{2}} = \binom{\frac{n}{2}}{e} \binom{e}{\frac{e-1}{2}} \binom{\frac{n}{2}-e}{\frac{l}{2}}.
\]

\end{proof}

By summing over the cases when $e+l=k$, we get the following corollary:
\begin{corollary}\label{cor:typeB-NCC-rot}
We have a $q$-analog of the type $B$ Narayana numbers,
which admits the CSP triple
 \begin{equation*}
 \left(
 \NCC^B(n+1,k), \langle \rot_n \rangle, U_{n,k}(q)
   \right),
\end{equation*}
where $U_{n,k}(q) = \sum_{e=0}^k q^{e(e+1)+(n+1)(k-e)} (1+[e]_q) \qbinom{n}{2e} \Cat[q]{e} \qbinom{n-2e}{k-e}$.
\end{corollary}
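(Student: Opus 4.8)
The plan is to deduce \cref{cor:typeB-NCC-rot} directly from \cref{thm:typeB-NCC-rot-CSP}, using the elementary fact that a disjoint union of CSP-triples whose group actions are restrictions of a common action is again a CSP-triple. First I would recall from the definition that
\[
\NCC^B(n+1,k) = \bigsqcup_{e+l=k}\NCC^B(n+1,e,l)
\]
is a \emph{disjoint} union, and observe that the rotation $\rot_n$ changes neither the number of proper edges nor the number of loops of a configuration, so the $\rot_n$-action on $\NCC^B(n+1,k)$ restricts to the $\rot_n$-action on each summand $\NCC^B(n+1,e,l)$.

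Next, let $\xi$ be a primitive $n^{\thsup}$ root of unity and let $d \in \setZ$. Because the summands are disjoint and each is $\rot_n$-stable, the number of elements of $\NCC^B(n+1,k)$ fixed by $\rot_n^d$ is the sum, over all $e,l$ with $e+l=k$, of the number of elements of $\NCC^B(n+1,e,l)$ fixed by $\rot_n^d$. By \cref{thm:typeB-NCC-rot-CSP}, the latter count equals the value of $(1+[e]_q)\qNCC[q]{n}{e}{l}$ at $q = \xi^d$. Summing and making the substitution $l = k-e$ — which turns $q^{e(e+1)+(n+1)l}$ into $q^{e(e+1)+(n+1)(k-e)}$ — we see that this total equals $U_{n,k}(\xi^d)$. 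Hence $(\NCC^B(n+1,k),\langle\rot_n\rangle,U_{n,k}(q))$ satisfies \eqref{eq:cspDef}, which is exactly the assertion.

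Finally I would record the two bookkeeping points needed for the statement to be meaningful: $U_{n,k}(q)\in\setN[q]$, because it is a sum of products of $q$-binomial coefficients, $q$-Catalan numbers, the polynomial $1+[e]_q$, and powers of $q$, all of which lie in $\setN[q]$; and $U_{n,k}(1) = \sum_{e+l=k}(e+1)\binom{n}{2e}\Cat{e}\binom{n-2e}{l} = |\NCC^B(n+1,k)|$ by \eqref{eq:nccNELCount} together with the identity $|\NCC^B(n+1,e,l)| = (e+1)|\NCC(n+1,e,l)|$ noted before \cref{thm:typeB-NCC-Twist} (this last point is automatic once the CSP is verified, but it is a useful consistency check). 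There is no real obstacle here: all of the combinatorial content lives in \cref{thm:thielRefinement} and \cref{thm:typeB-NCC-rot-CSP}, and the only thing one must be slightly careful about is matching the exponent of $q$ through the substitution $l = k - e$ when identifying the summed polynomials with $U_{n,k}(q)$.
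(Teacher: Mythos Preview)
Your proposal is correct and follows the same approach as the paper: the corollary is obtained by summing the CSP-triples of \cref{thm:typeB-NCC-rot-CSP} over all $(e,l)$ with $e+l=k$, and the paper's proof merely records this together with the check that $U_{n,k}(1)=\binom{n}{k}^2$.
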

\begin{proof}
As in the discussion before \cref{thm:typeB-NCC-Twist} it is not difficult to prove that when $q=1$,
we do indeed obtain $\binom{n}{k}^2$, so this is a $q$-Narayana refinement.
\end{proof}
Now, summing over all $k$ gives cyclic sieving on $\NCC^B(n+1)$ under rotation.
We leave it as an open problem to find a nice expression for $\sum_k U_{n,k}(q)$.

%
%
% \begin{conjecture}
%  We have that
%  \[
%   \sum_{k=0}^n \sum_{e=0}^k q^{e(e+1)+(n+1)(k-e)} (1+[e]_q) \qbinom{n}{2e} \Cat[q]{e} \qbinom{n-2e}{k-e}
%  \]
% equals
% \[
%  \frac{1}{q} \left( (q-1)\Cat[q]{n+1} + \sum_{k=0}^n q^{k^2+k}\qbinom{n}{k}^2\right)
% \]
% \end{conjecture}
% \Per{Do we want to prove this? It is natural to ask what happens when we rotate all things...}
%

%\section{A type \texorpdfstring{$B$}{B} analog of a result by Br{\"a}nd{\'e}n}\label{sec:branden}
\section{Two-column semistandard Young tableaux}\label{sec:branden}

The \defin{Schur polynomial} $\schurS_{\lambda}(x_1,\dotsc,x_n)$
is defined as the sum
\[
 \mdefin{\schurS_{\lambda}(x_1,\dotsc,x_n)} \coloneqq \sum_{T \in \SSYT(\lambda,n)}
 \prod_{j \in \lambda } x_{T(j)}
\]
where $\SSYT(\lambda,n)$ is the set of semi-standard Young
tableaux of shape $\lambda$ with maximal entry at most $n$.
The product is taken over all labels in $T$.

P. Br{\"a}nd{\'e}n gave the following interpretation of $q$-Narayana numbers.
\begin{theorem}[{See \cite[Thm.~6]{Branden2004}.}]\label{thm:branden}
For $0\leq k \leq n-1$,
\begin{equation}
\Nar[q]{n}{k+1} = \schurS_{2^{k}}(q,q^2,\dotsc,q^{n-1}).
\end{equation}
\end{theorem}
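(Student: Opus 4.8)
The plan is to evaluate the Schur polynomial $\schurS_{2^k}$ in the principal specialization $x_i = q^i$ using the well-known hook-content / bideterminant formula, and then recognize the resulting product of $q$-integers as the closed form of the $q$-Narayana number. First I would recall that for a partition $\lambda$ with at most $n$ parts, the principal specialization of the Schur polynomial is given by
\[
\schurS_{\lambda}(q, q^2, \dots, q^{n}) = q^{\sum_i i\lambda_i} \prod_{(i,j)\in\lambda} \frac{[n + j - i]_q}{[h(i,j)]_q},
\]
where $h(i,j)$ is the hook length of the cell $(i,j)$ (see \cite{StanleyEC2,Macdonald1995}). Since we want the arguments to be $q, q^2, \dots, q^{n-1}$, I would instead use the specialization in $n-1$ variables, i.e. replace $n$ by $n-1$ above. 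The shape $\lambda = 2^k$ is a $k\times 2$ rectangle, so the hook lengths are easy to list: in column $j\in\{1,2\}$ and row $i\in\{1,\dots,k\}$ the hook length is $h(i,1) = k - i + 2$ and $h(i,2) = k - i + 1$; the contents $j - i$ range accordingly.

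The key computational step is then to simplify
\[
\schurS_{2^k}(q,\dots,q^{n-1}) = q^{c} \prod_{i=1}^{k} \frac{[n-1+1-i]_q\,[n-1+2-i]_q}{[k-i+2]_q\,[k-i+1]_q}
\]
for the appropriate power $c = \sum_i i\lambda_i$ of $q$ (here $\lambda_1 = \dots = \lambda_k = 2$, wait — I should be careful: the sum $\sum_j j \lambda'_j$ or $\sum_i i \lambda_i$ depending on convention; I would fix the convention so that the prefactor exponent comes out to $k(k+1)$, which is $(k+1)k$, matching the $q^{k(k-1)}$ in the definition of $\Nar[q]{n}{k+1}$ after a shift $k \mapsto k+1$). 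The numerator product telescopes: $\prod_{i=1}^k [n-i]_q \cdot \prod_{i=1}^k [n+1-i]_q = \frac{[n-1]_q!}{[n-1-k]_q!}\cdot\frac{[n]_q!}{[n-k]_q!}$, and the denominator is $\prod_{i=1}^k [k-i+2]_q \cdot \prod_{i=1}^k [k-i+1]_q = [k+1]_q!\cdot [k]_q!$. Assembling these and comparing with
\[
\Nar[q]{n}{k+1} = \frac{q^{(k+1)k}}{[n]_q}\qbinom{n}{k+1}\qbinom{n}{k} = \frac{q^{k(k+1)}}{[n]_q}\cdot\frac{[n]_q!}{[k+1]_q![n-k-1]_q!}\cdot\frac{[n]_q!}{[k]_q![n-k]_q!}
\]
should yield an identity; the only real work is bookkeeping of $q$-factorials and the prefactor exponent.

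The main obstacle I anticipate is purely a matter of getting the exponent of the $q$-prefactor and the index shift ($k$ versus $k+1$) exactly right, since the hook-content formula has several common normalizations (arguments $x_i = q^{i-1}$ versus $q^i$, and the content exponent $\sum (j-i)$ versus $\sum i \lambda_i$). Once that is pinned down by checking a small case such as $n=3, k=1$ (where $\schurS_{(2)}(q,q^2) = q^2 + q^3 + q^4$ and $\Nar[q]{3}{2} = q^2[3]_q$, which agrees), the general identity follows by the $q$-factorial manipulation above. An alternative, if one prefers to avoid the hook-content formula, is to use the bialternant (Jacobi–Trudi or Weyl character) formula for $\schurS_{2^k}$ directly, but the hook-content route is the most economical.
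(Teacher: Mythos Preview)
The paper does not supply its own proof of this theorem; it is quoted verbatim from Br\"and\'en's paper \cite{Branden2004} and used as a black box. So there is no ``paper's proof'' to compare against here.

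That said, your proposed argument via the hook-content formula is correct and is essentially the standard way to establish such principal-specialization identities. Carrying out the computation you sketch: for $\lambda=2^k$ one has $|\lambda|=2k$ and $b(\lambda)=\sum_i (i-1)\lambda_i=k(k-1)$, so the $q$-prefactor in $\schurS_{2^k}(q,q^2,\dotsc,q^{n-1})=q^{|\lambda|}\schurS_{2^k}(1,q,\dotsc,q^{n-2})$ is indeed $q^{k(k+1)}$. The hook-length product in the denominator is $[k+1]_q!\,[k]_q!$ and the content product in the numerator is $\dfrac{[n-1]_q!}{[n-1-k]_q!}\cdot\dfrac{[n]_q!}{[n-k]_q!}$, which matches $\dfrac{1}{[n]_q}\qbinom{n}{k+1}\qbinom{n}{k}$ on the nose. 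Your hesitation about conventions is prudent, but once you fix the specialization as $x_i=q^i$ in $n-1$ variables (equivalently, pull out $q^{|\lambda|}$ from the usual $x_i=q^{i-1}$ form), the exponent falls out exactly as you guessed and as your $n=3$, $k=1$ sanity check confirms.
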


There is a type $B$ analog of \cref{thm:branden}.
\begin{theorem}
For $0\leq k \leq n$,
\begin{equation}
q^{k(k+1)} \qbinom{n}{k}\qbinom{n}{k}
= \schurS_{2^{k}1^k/1^k}(q,q^2,\dotsc,q^{n}).
\end{equation}
\end{theorem}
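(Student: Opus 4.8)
The plan is to observe that the skew shape $2^k1^k/1^k$ is \emph{disconnected}, and then reduce everything to a single elementary symmetric polynomial. Concretely, I would draw $\lambda=(2^k,1^k)$ in English notation and note that removing the sub-diagram $\mu=(1^k)$ (the top $k$ cells of the first column) leaves exactly two single columns of cells, namely $(1,2),\dots,(k,2)$ and $(k+1,1),\dots,(2k,1)$, occupying pairwise disjoint rows and pairwise disjoint columns. Hence a semistandard filling of $2^k1^k/1^k$ is simply an ordered pair of strictly increasing $k$-tuples with entries in $[n]$ --- one for each column --- with no interaction between them, and the weight is the product of the two corresponding squarefree monomials. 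This gives
\[
\schurS_{2^k1^k/1^k}(x_1,\dots,x_n)=\schurS_{1^k}(x_1,\dots,x_n)^2=e_k(x_1,\dots,x_n)^2,
\]
i.e.\ the standard factorization of a skew Schur function over a disconnected shape, where $e_k$ is the $k$-th elementary symmetric polynomial.

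Next I would compute the principal specialization $e_k(q,q^2,\dots,q^n)$. Writing a strictly increasing $k$-subset of $[n]$ as $\{a_1<\dots<a_k\}$ and substituting $b_j=a_j-j$, one gets $0\le b_1\le\dots\le b_k\le n-k$ and $\sum_j a_j=\binom{k+1}{2}+\sum_j b_j$, so
\[
e_k(q,q^2,\dots,q^n)=\sum_{1\le a_1<\dots<a_k\le n}q^{a_1+\dots+a_k}=q^{\binom{k+1}{2}}\sum_{0\le b_1\le\dots\le b_k\le n-k}q^{b_1+\dots+b_k}=q^{\binom{k+1}{2}}\qbinom{n}{k},
\]
using the interpretation of $\qbinom{n}{k}$ as the generating function for partitions inside a $k\times(n-k)$ box. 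Squaring and using $2\binom{k+1}{2}=k(k+1)$ then gives
\[
\schurS_{2^k1^k/1^k}(q,q^2,\dots,q^n)=e_k(q,q^2,\dots,q^n)^2=q^{k(k+1)}\qbinom{n}{k}\qbinom{n}{k},
\]
which is exactly the claimed identity.

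I do not expect any real obstacle here; the one point that deserves a moment's care is confirming that the skew diagram truly breaks into two connected components, so that no column-strictness condition links the two pieces and the skew Schur function factors --- this is immediate from the picture, since the cells $(k,2)$ and $(k+1,1)$ are only diagonally adjacent. If one prefers to avoid citing the factorization of skew Schur functions, one can argue purely from the combinatorial definition in direct analogy with the proof of Brändén's \cref{thm:branden}, replacing the two-column rectangle $2^k$ by two disjoint length-$k$ columns and shifting the specialization from $q,\dots,q^{n-1}$ to $q,\dots,q^n$.
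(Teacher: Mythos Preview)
Your proof is correct and follows exactly the same route as the paper's: factor $\schurS_{2^k1^k/1^k}=(\schurS_{1^k})^2$ using the disconnectedness of the skew shape, compute the principal specialization $\schurS_{1^k}(q,\dots,q^n)=q^{k(k+1)/2}\qbinom{n}{k}$, and square. You have simply supplied more detail than the paper, which dispatches the argument in three sentences.
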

\begin{proof}
We first note that $\schurS_{2^{k}1^k/1^k} = (\schurS_{1^k})^2$.
To compute $\schurS_{1^k}$, we simply sum over all $k$-subsets of $[n]$.
This gives immediately that
\[
 \schurS_{1^k}(q,q^2,\dotsc,q^n) = q^{k(k+1)/2} \qbinom{n}{k},
\]
and the theorem above follows.
\end{proof}

It is then reasonable to interpret
\begin{equation}\label{eq:schurInterpolation}
 \schurS_{2^{k}1^s/1^s}(q,q^2,\dotsc,q^{n})
\end{equation}
for $0 \leq s \leq k$ as an interpolation between type $A$ ($s=0$)
and type $B$ ($s=k$) $q$-Narayana polynomials.
Note that this approach is different from what
is sought after in \cref{sec:typeABNarayanaQuest}.
The expression in \eqref{eq:schurInterpolation} can easily be computed by
the dual Jacobi--Trudi identity, see \cite{Macdonald1995}.
We find that \eqref{eq:schurInterpolation} is equal to
\[
q^{k(k+1)} \left(
\qbinom{n}{k}[2] -
q^{(s+1)^2}\qbinom{n}{k-s-1}\qbinom{n}{k+s+1}
\right).
\]

The first part of the theorem below follows from combining \cite[Thm.~1.4]{Rhoades2010} and \cref{thm:branden}.
\begin{theorem}\label{thm:ssytNarayanaCSP}
Assume $1\leq k<n$ and let $\kprom_{n-1}$ act on $\SSYT(2^k,n-1)$ via so-called $k$-promotion,
so that $\kprom_{n-1}$ has order $n-1$.
Then
\[
 \left(\SSYT(2^k,n-1), \langle \kprom_{n-1} \rangle,  \Nar[q]{n}{k+1} \right)
\]
is a CSP triple.
Moreover, there is a cyclic group $\langle\varphi\rangle$ of
order $n$ acting on $\SSYT(2^k,n-1)$ such that
\[
\left(\SSYT(2^k,n-1), \langle\varphi\rangle,  \Nar[q]{n}{k+1} \right)
\]
is a CSP triple.
\end{theorem}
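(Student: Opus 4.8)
I would dispatch the first assertion by citation: $\kprom_{n-1}$ on $\SSYT(2^k,n-1)$ is exactly the order-$(n-1)$ action occurring in Rhoades' cyclic sieving for $k$-promotion on rectangular semistandard tableaux \cite[Thm.~1.4]{Rhoades2010}, whose associated polynomial is the principal specialization $\schurS_{2^k}(q,q^2,\dotsc,q^{n-1})$, and this specialization equals $\Nar[q]{n}{k+1}$ by \cref{thm:branden}. So the first triple is a CSP-triple with no further work.

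For the ``moreover'' part the plan is to manufacture an action of order $n$ (rather than $n-1$) by transporting a known order-$n$ cyclic sieving across a bijection. Reindexing $k \mapsto k+1$ in \cref{bij:SSYTtoNCP} gives a bijection $\Psi\colon \SSYT(2^{k},n-1)\xrightarrow{\sim}\NCP(n,k+1)$, and $\bigl(\NCP(n,k+1),\langle\rot_n\rangle,\Nar[q]{n}{k+1}\bigr)$ is a CSP-triple by \cite[Thm.~7.2]{ReinerStantonWhite2004}, where $\rot_n$ rotates a non-crossing partition of $[n]$ by $2\pi/n$. I would then define $\varphi\coloneqq\Psi^{-1}\circ\rot_n\circ\Psi$, a generator of $C_n$ acting on $\SSYT(2^k,n-1)$. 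Since $\Psi$ is a bijection, $\lvert\{T:\varphi^d T=T\}\rvert=\lvert\{\pi:\rot_n^d\pi=\pi\}\rvert$ for every $d$, and by the $\NCP$ cyclic sieving the right-hand side equals the value of $\Nar[q]{n}{k+1}$ at $q=\xi^d$ for $\xi$ a primitive $n$-th root of unity. As the polynomial is unchanged, $\bigl(\SSYT(2^k,n-1),\langle\varphi\rangle,\Nar[q]{n}{k+1}\bigr)$ satisfies \eqref{eq:cspDef}, as required.

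The hard part is really nothing beyond keeping the two cited inputs straight --- bijectivity of \cref{bij:SSYTtoNCP} and the Reiner--Stanton--White non-crossing-partition CSP --- since cyclic sieving is preserved automatically under conjugation of the action by a bijection. The one point worth spelling out is the meaning of ``cyclic group of order $n$'': as in \cite{ReinerStantonWhite2004} one takes the abstract group $C_n$ acting through $\varphi$, since the action can fail to be faithful in degenerate cases (e.g.\ $k=n-1$, where $\SSYT(2^{n-1},n-1)$ and $\NCP(n,n)$ are both singletons). I would also remark that $\varphi$ is not canonical: conjugating $\rot_n$ instead across the bijection $\SSYT(2^k,n-1)\to\NCM(n,k)$ of \cref{fig:Catalan zoo} and invoking \cref{prop:NCMNarayanaCSP} yields another valid $\varphi$, and any two such choices are equivariantly equivalent because they share the polynomial $\Nar[q]{n}{k+1}$ and the group order $n$.
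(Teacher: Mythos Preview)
Your proposal is correct and follows the same strategy as the paper: the first part is dispatched by citing Rhoades together with Br{\"a}nd{\'e}n's identity, and the second part defines $\varphi$ by conjugating $\rot_n$ on $\NCP(n,k+1)$ through \cref{bij:SSYTtoNCP} and invoking \cite[Thm.~7.2]{ReinerStantonWhite2004}. The only difference is that the paper goes on to spell out $\varphi$ explicitly as an operation on the tableau entries (via the auxiliary sets $P$, $Q$ and the integers $\ell$, $b$), whereas you leave $\varphi$ in the abstract form $\Psi^{-1}\circ\rot_n\circ\Psi$; both are complete proofs, but the paper's version is more informative.
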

\begin{proof}
We can define the action $\varphi$ as follows.
Given $T\in\SSYT(2^k,n-1)$, define
\[
Q=\{j:1\le j\le n-1, \mid\!\!\{i:T_{i,1}\ge j\}\!\!\mid=\mid\!\! \{i:T_{i,2}\ge j\}\!\!\mid\}
\]
and
let $P$ be the subset of $Q$ containing numbers not occurring as entries in $T$.
Further, define $\ell=\max P$ and $b=\max Q$. If $P=\emptyset$, let $\ell=0$.
Now $\varphi(T)$ is defined as follows.
If $\ell=n-1$, then we just add one to every entry in $T$ and are done.
If $\ell\neq n-1$, then we add one to every entry in $T$ and
\begin{itemize}
 \item in the first column:  remove $b+1$, add 1 in increasing order;
 \item in the second column: remove $n$, add $\ell+1$  in increasing order.
\end{itemize}
This can be seen to be an action with the desired properties by referring to \cref{bij:SSYTtoNCP}
from $\SSYT(2^k,n-1)$ to $\NCP(n,k+1)$.
Then rotation one step of the latter set corresponds to $\varphi$ where
$P$ are the other elements in the same block as $n$, $b$ is the
smallest element in the block of $n-1$ (if $n-1\notin P$), and $\ell$ is
the largest element in the block of $n$ other than $n$ itself.
Clearly, $b+1$ must be removed from the first column since it will
be the smallest element in the block of $n$ in $\varphi(T)$, and $\ell+1$ must be added to the second since it will
be the largest element in the block containing 1 in $\varphi(T)$.
\end{proof}

\begin{bijection}\label{bij:SSYTtoNCP}
Let $T \in \SSYT(2^k,n-1)$. Starting from $i = 1$, consider $x_i = T_{i,2}$.
Find the largest $y \in T_{\ast, 1}, y \leq x_i$, which is not in
\[
P_{i-1} \coloneqq \bigcup_{j \in [i-1]} p_j,
\]
and set $y_i = y$. Let $p_i = \{z \in \setN : y_i \le z \le x_i, z \not\in P_{i-1}\}$.
Repeat for $i < k + 1$. Finally, let $p_{k+1} = [n] \setminus P_{k}$.
Then the blocks $p_1, \dotsc, p_{k+1}$ form a non-crossing partition
in $\NCP(n,k+1)$ by construction. Note that exactly one element from
each of $T_{\ast, 1}$ and $T_{\ast, 2}$ is contained in $p_i$.
Note also that this is in fact the unique non-crossing partition
in $\NCP(n,k+1)$ having parts whose smallest and largest
elements are $y_i$ and $x_i$ respectively.

The inverse of the above bijection can be described as follows.
Let $p_1 | p_2 | \dotsb | p_{k+1} \in \NCP(n,k+1)$ and assume $n \in p_{k+1}$.
Let the first column of $T$ consist of the smallest elements from $p_i, 1 \leq i \leq k$,
in increasing order from top to bottom, and the second column of $T$ of
the largest elements from the same blocks, also in increasing order.
To see why $T \in \SSYT(2^k,n-1)$, it suffices to suppose that we on some
row $i$ have $T_{i,1} > T_{i,2}$, so the smallest element
in the block containing $T_{i,2}$ must be $T_{j,1}$ for some $j < i$.
Then the smallest element in the block containing $T_{j,2}$ has to
be $T_{k,1}$ for some $j < k < i$, the smallest element in the
block containing $T_{j+1,2}$ some $T_{k',1}$ for $j < k' < i$, and so on.
We match all elements $T_{j,2}$, $\dots$, $T_{i-1,2}$ to elements
in the first column between $T_{j,1}$ and $T_{i,1}$, but this yields a contradiction.
Note that $T$ is the unique element of $\SSYT(2^k,n-1)$ whose first column consists of the
smallest elements of $p_1$, $\dots$, $p_k$ and whose
second column consists of the largest elements of these parts.
Hence it is clear that this indeed is the inverse.
\end{bijection}

Below is an example of \cref{bij:SSYTtoNCP} and $\varphi$, when $n=8$, $k=4$.
\[
\ytableausetup{boxsize=1.2em}
T=\ytableaushort{12, 23, 34, 77}\quad
\longleftrightarrow\quad
\{\{1, 4\}, \{2\}, \{3\}, \{7\}, \{5,6,8\}\}, \quad
\varphi(T)=\ytableaushort{13, 24, 35, 47}
\]

\begin{remark}
The bijection in the proof of \cite[Thm.~5]{Branden2004} together with \cref{bij:NCPtoDyck}
provides a different bijection between $\SSYT(2^k,n-1)$ and $\NCP(n,k+1)$ where,
if $T \in \SSYT(2^k,n-1)$, then $T_{1,1}$, $T_{2,1} - T_{1,1}, \dots, T_{k,1} - T_{k-1,1}, n - T_{k,1}$
are the sizes of the blocks with $T_{1,2}, T_{2,2}, \dots, T_{k,2}$ as the largest elements.

\end{remark}

There is a type $B$ version of \Cref{thm:ssytNarayanaCSP}.
\begin{theorem}\label{thm:ssytBNarayanaCSP}
	Let $\kprom_n$ act on $\SSYT(2^k1^k/1^k,n)$ via $k$-promotion, then
	\[
	\left(\SSYT(2^k1^k/1^k,n), \langle \kprom_{n} \rangle,  q^{k(k+1)} \qbinom{n}{k}^2 \right)
	\]
	is a CSP triple.
\end{theorem}
\begin{proof}
	We describe a bijection from $\SSYT(2^k1^k/1^k,n)$ to $\BW(n,k) \times \BW(n,k)$.
	Let $T \in \SSYT(2^k1^k/1^k,n)$. The corresponding pair of binary words $(\sfb_1, \sfb_2)$
	is constructed as follows. Write $\sfb_1=\sfb_{11} \dotsc \sfb_{1n}$ and let $\sfb_{1i}=1$
	if $T$ has an $i$ in the left column, and otherwise, let $\sfb_{1i}=0$. In an analogous way,
	let $\sfb_2$ be determined by the entries in the right column of $T$. It is easy to see that
	this bijection is equivariant with the corresponding group action being $\rot_n$ on the pair of binary
	words. Here, $\rot_n$ acts by cyclically shifting both of the words one step. It follows from \cite[Thm.~1.1]{ReinerStantonWhite2004} that
	\[\left(\BW(n,k) \times \BW(n,k), \langle \rot_n \rangle, \qbinom{n}{k}^2 \right)\]
	exhibits the cyclic sieving phenomenon.
	It is not hard to see that the two different CSP-polynomials agree at $n^\thsup$ roots of unity.
	This completes the proof.
\end{proof}
It is natural to ask if the first part of \cref{thm:ssytNarayanaCSP} and \cref{thm:ssytBNarayanaCSP}
generalize to the intermediate skew shapes. We would then hope that $k$-promotion acting on $\SSYT(2^{k}1^s/1^s ,n)$, for $1\le s \le k-1$,
has order $n$. However, this is not the case, as for $n=4$, $k=2$, $s=1$, the tableaux
\[
\ytableaushort{{\none}3,14,2}, \quad
\ytableaushort{{\none}1,24,3}, \quad
\ytableaushort{{\none}1,13,4}
\]
form an orbit under $k$-promotion, but we want a group action of order $4$.

Perhaps some other group action gives a CSP triple with \eqref{eq:schurInterpolation} as
the CSP-polynomial. In a recent preprint, Y.-T.~Oh and E.~Park~\cite{OhPark2020x}
the authors show some closely related results, regarding cyclic sieving on SSYT.

\section{Triangulations of \texorpdfstring{$n$}{n}-gons with \texorpdfstring{$k$}{k}-ears}\label{sec:earRefinement}

We shall now consider type $A$ triangulations of an $n$-gon.
The main result in this section is a refinement of the CSP instance on
triangulations of $n$-gons which are counted by $\Cat{n-2}$, see \cite[Thm.~7.1]{ReinerStantonWhite2004}.
We also mention a cyclic sieving result on type $B$ triangulations.

\subsection{Refined CSP on triangulations by considering ears}

Let $\mdefin{\TRI(n)}$ denote the set of triangulations of a regular $n$-gon.
If the vertices $i, i+1, i+2 \pmod{n}$ are pairwise adjacent for $T \in \TRI(n)$,
we say they form an \defin{ear} of $T$.
Let $\mdefin{\TRI_{\ear}(n,k)}$ denote the set of triangulations of a regular $n$-gon
with exactly $k$ ears, and let $\Tri{n}{k}$ be the cardinality of this set.
It was shown by F.~Hurtado and M.~Noy~\cite[Thm.~1]{HurtadoNoy1996} that
\begin{equation}\label{eq:HurtadoNoy}
\Tri{n}{k} = \frac{n}{k}\binom{n-4}{2k-4}\Cat{k-2} \cdot 2^{n-2k} \quad \text{ whenever }\quad 2 \leq k \leq \frac{n}{2}.
\end{equation}
We now introduce the following $q$-analog of the expression in \eqref{eq:HurtadoNoy}.
For integers $n$ and $k$ satisfying $2 \leq k \leq \frac{n}{2}$, let
\begin{equation*}
\mdefin{\Tri[q]{n}{k}} \coloneqq
q^{k(k-2)} \frac{[n]_q}{[k]_q} \qbinom{n-4}{2k-4}\Cat[q]{k-2}
\left( \sum_{j=0}^{n-2k} q^{j(n-2)}\qbinom{n-2k}{j}\right).
\end{equation*}

At a first glance, one might hope that there is an easier expression for $\Tri[q]{n}{k}$. However, note that $\Tri[q]{n}{k}$ is not palindromic in general. As an example, consider $\Tri[q]{6}{2}=1+q^4+q^5+q^8$. This means that one cannot hope to find a formula for $\Tri[q]{n}{k}$ which only involves products of palindromic polynomials. In particular, it cannot be a product of $q$-binomial coefficients.

The choice of $\Tri[q]{n}{k}$ is motivated by the following theorem.
\begin{theorem}\label{thm:earTriRefinement}
For all integers $n \geq 4$, we have that
\begin{equation}
\sum_k \Tri[q]{n}{k} = \Cat[q]{n-2}.
\end{equation}
In other words, the polynomials $\Tri[q]{n}{k}$ refine the $q$-Catalan numbers.
\end{theorem}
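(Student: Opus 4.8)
The plan is to prove the stated identity as a $q$-binomial identity, reducing it to iterated applications of the $q$-Vandermonde identity (\cref{thm:q-vandermonde}); the target right-hand side will be produced in the form $\Cat[q]{n-2} = \qbinom{2n-4}{n-2} - q\,\qbinom{2n-4}{n-3}$, which is the expansion recorded in \eqref{eq:qCatalanDef}.

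First I would put $\Tri[q]{n}{k}$ into a more tractable shape. Setting $e = k-2$ and using the ``subset of a subset'' identity $\qbinom{a}{b}\qbinom{b}{c} = \qbinom{a}{c}\qbinom{a-c}{b-c}$ on $\qbinom{n-4}{2e}\Cat[q]{e} = \tfrac{1}{[e+1]_q}\qbinom{n-4}{2e}\qbinom{2e}{e} = \tfrac{1}{[e+1]_q}\qbinom{n-4}{e}\qbinom{n-4-e}{e}$, and writing the inner $q$-binomial as $\qbinom{n-2k}{j} = \qbinom{n-4-2e}{j}$, the left-hand side becomes the double sum
\[
 \sum_{k}\Tri[q]{n}{k} \;=\; [n]_q \sum_{e,j\ge 0}\frac{q^{e^2+2e+j(n-2)}}{[e+1]_q[e+2]_q}\;\qbinom{n-4}{e}\qbinom{n-4-e}{e}\qbinom{n-4-2e}{j}.
\]
The factor $[n]_q/([e+1]_q[e+2]_q)$ is the awkward part; I would handle it by writing $[n]_q = [e+2]_q + q^{e+2}[n-e-2]_q$, which splits each term into one piece that is manifestly polynomial and one that still carries a $1/[e+1]_q$ and is designed to telescope against the $e \mapsto e+1$ term of the first piece — or, equivalently, clear denominators by multiplying the identity through by $[n-2]_q!$ so that every summand becomes a Laurent polynomial and all manipulations stay within $\setN[q]$.

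Next I would evaluate the two sums. For fixed $e$, the sum $\sum_{j\ge 0} q^{j(n-2)}\qbinom{n-4-2e}{j}$ is a specialization of the $q$-binomial theorem, and after pairing it with $\qbinom{n-4}{e}\qbinom{n-4-e}{e}$ I would reindex so that the resulting $e$-sum matches the weighted convolution $\sum_j q^{j(a-c+j)}\qbinom{a}{c-j}\qbinom{b}{j}$ of \cref{thm:q-vandermonde} (note the exponent $e^2+2e$ is quadratic in the summation index, as $q$-Vandermonde requires). A first application collapses the inner convolution; a second application, once the telescoping above has removed the spurious denominators, collapses the remaining sum to $\qbinom{2n-4}{n-2} - q\,\qbinom{2n-4}{n-3} = \Cat[q]{n-2}$.

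The main obstacle is exactly this bookkeeping: arranging the reindexings so that the exponents $e^2+2e+j(n-2)$ line up with the $q^{j(a-c+j)}$ weights of $q$-Vandermonde, and disposing of the non-polynomial factor $[n]_q/([k]_q[k-1]_q)$ without leaving a residue. If the telescoping proves too delicate, the fallback is a combinatorial proof via the dual tree of a triangulation $T \in \TRI_{\ear}(n,k)$: its leaves are precisely the $k$ ears, so it has $k-2$ trivalent nodes and $n-2k$ bivalent nodes; suppressing the bivalent nodes leaves a trivalent plane tree with $k$ cyclically ordered leaves (counted by $\Cat[q]{k-2}$ once the ear positions are fixed), the reinsertion data for the $n-2k$ bivalent nodes along the $2k-3$ contracted edges, together with the binary side-choices, contributes $\qbinom{n-4}{2k-4}$ and the inner sum, and the cyclic placement of the ears contributes the factor $[n]_q/[k]_q$. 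Transporting $\maj$ of the associated Dyck path through this decomposition identifies $\Tri[q]{n}{k}$ with $\sum_{D} q^{\maj(D)}$ over the $D \in \DYCK(n-2)$ whose triangulation has $k$ ears, whence summing over $k$ recovers $\sum_{D \in \DYCK(n-2)} q^{\maj(D)} = \Cat[q]{n-2}$.
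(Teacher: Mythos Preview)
Your plan has the right flavor --- reduce to iterated $q$-Vandermonde --- but the concrete steps do not go through as written. The key problem is the inner sum: for fixed $e$, $\sum_{j} q^{j(n-2)}\qbinom{n-4-2e}{j}$ is \emph{not} a specialization of the finite $q$-binomial theorem, which requires a weight $q^{\binom{j}{2}}$; with the linear exponent $j(n-2)$ this is a Rogers--Szeg\H{o} polynomial evaluated at $q^{n-2}$ and has no product form. So ``evaluate the $j$-sum first, then the $e$-sum'' stalls at step one. Likewise, the splitting $[n]_q=[e+2]_q+q^{e+2}[n-e-2]_q$ does not yield a manifestly polynomial piece (both halves still carry $1/[e+1]_q$), and the promised telescoping against the $e\mapsto e+1$ term is not spelled out; I do not see how it closes.

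The paper attacks the same double sum but with the \emph{other} order of summation. After the shift $e=k-2$, $n\mapsto n+4$, it reindexes by $r=e$, $s=e+j$ and sums over $r$ for fixed $s$. Written in $q$-Pochhammer notation, the factor $1/([e+1]_q[e+2]_q)$ becomes part of $1/(q^3;q)_r$, and the inner $r$-sum is then a terminating ${}_2\phi_1$ that the first $q$-Chu--Vandermonde identity evaluates in closed form; the remaining $s$-sum is again a terminating ${}_2\phi_1$, dispatched by the second $q$-Chu--Vandermonde identity. The missing idea is precisely this diagonal change of variable $s=e+j$, which simultaneously cures the denominator and makes both sums summable.

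Your combinatorial fallback is only a sketch. The dual-tree decomposition is indeed the Hurtado--Noy count \eqref{eq:HurtadoNoy} at $q=1$, but you have not defined what ``transporting $\maj$'' means through it, and there is no known statistic on triangulations whose generating function over $\TRI_{\ear}(n,k)$ equals $\Tri[q]{n}{k}$ (the paper notes that $\Tri[q]{n}{k}$ is not even palindromic, so it cannot arise from a naive factorization). Asserting that $\Tri[q]{n}{k}=\sum_{D} q^{\maj(D)}$ over the relevant Dyck paths is essentially asserting the theorem.
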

\begin{proof}
We first recall some notation from $q$-hypergeometric series,
where we use \cite[Appendix I-II]{GasperRahman2004} as the main reference.
We set
$
\mdefin{(a;q)_n} \coloneqq (1-a)(1-aq)\dotsm (1-aq^{n-1})
$
so that
\[
 \qbinom{m}{r} = \frac{(q;q)_m}{(q;q)_r (q;q)_{m-r}}
\quad
\text{ and }
\quad
[m]_q = \frac{ (q;q)_m}{(1-q)(q;q)_{m-1}}.
\]
We have, \cite[I.7--I.26]{GasperRahman2004}, that for all $a$,
\begin{align*}
(a; q)_{n+k}     &= (a;q)_{k}(a q^k;q)_{n},\\
(a; q)_{n-k}     &=
\frac{(a;q)_{n}}{(q^{1-n}/a;q)_k}\left(-\frac{q}{a}\right)^{k} q^{\binom{k}{2}-nk}, \\
(a q^{-n};q)_{n} &= q^{-\binom{n}{2}} \left( -\frac{a}{q} \right)^n (q/a;q)_{n}, \\
(a q^{-n};q)_{k} &= q^{-\binom{n}{2}} \left( -\frac{a}{q} \right)^n
\frac{(q/a;q)_{n}(a;q)_k}{(q^{1-k}/a;q)_n}, \\
(q^{-n};q)_{k} &= (-1)^k
q^{\binom{k}{2}-nk} \frac{(q;q)_{n}}{(q;q)_{n-k}}, \\
(a q^{n}; q)_{n} &= \frac{(a;q)_{2n}}{(a;q)_{n}}.
\end{align*}
Moreover, we let
\[
 \mdefin{\pFq{2}{1}{a,b}{c}{q;z}} \coloneqq
 \sum_{n \geq 0} \frac{(a;q)_n (b;q)_n}{(c;q)_n (q;q)_n} z^n.
\]
The $q$-Chu--Vandermonde identity \cite[II.6, II.7]{GasperRahman2004},
can be stated in the following two ways:
\begin{equation}\label{eq:qChuVandermonde}
\pFq{2}{1}{a,q^{-n}}{c}{q;q} = \frac{(c/a;q)_n}{(c;q)_n} a^n
\quad\text{ and }\quad
\pFq{2}{1}{a,q^{-n}}{c}{q;cq^n/a} = \frac{(c/a;q)_n}{(c;q)_n}.
\end{equation}

We are now ready to prove \cref{thm:earTriRefinement},
which is equivalent to proving that for all $n \geq 4$,
\begin{align}\label{eq:mainIdentity}
\sum_{k \geq 2}
q^{k(k-2)}
\frac{[n][n-1]}{[k][k-1]}
\qbinom{n-4}{2k-4}
\qbinom{2k-4}{k-2}
\sum_{j=0}^{n-2k}
q^{j(n-2)} \qbinom{n-2k}{j}
=
\qbinom{2n-4}{n-2}.
\end{align}
After shifting the $k$-indices by $2$, and the $n$-indices by $4$,
and multiplying both sides with $(1+q)$, we must show that
\begin{equation}\label{eq:qSeries}
\sum_{k,j}
R(k,j)
=
\frac{(1+q)\qbinom{2n+4}{n+2}}{[n+3][n+4]},
\end{equation}
where
\begin{align*}
R(k,j) &=
\frac{(1+q)q^{k(k+2)+j(n+2)}}{[k+1][k+2]}
\qbinom{n}{2k}
\qbinom{2k}{k}
\qbinom{n-2k}{j} \\
&=
\frac{q^{k(k+2)}}{(q;q)_{k}(q^3;q)_{k}}
\frac{q^{j(n+2)}(q;q)_{n}}{(q;q)_{n-2k-j}(q;q)_{j}} \\
& =
(-1)^j
\frac{
q^{n(2k+j)-\binom{2k+j}{2}}
q^{k(k+2)}}{(q;q)_{k} (q^3;q)_{k}}
\frac{q^{j(n+2)}(q^{-n};q)_{2k+j}}{(q;q)_{j}}.
\end{align*}
The right hand side of \eqref{eq:qSeries} simplifies
to $\frac{ (q^5;q)_{2n} }{(q^5;q)_{n}(q^3;q)_{n}}$.
There is no issue with extending the summation index in \eqref{eq:qSeries}
so that $k$, $j$ ranges over all integers since $R(k,j)$ vanishes unless $0\leq j \leq n-2k$.
By shifting the indexing, so
that $r \coloneqq k$, $s \coloneqq k+j$,
it suffices to prove that
\begin{equation}
 \sum_{r,s} S(r,s) = \frac{ (q^5;q)_{2n} }{(q^5;q)_{n}(q^3;q)_{n}}
\end{equation}
where
\[
 S(r,s) \coloneqq R(r,s-r) =
 (-1)^{s+r}
\frac{
q^{2n s+2s-\binom{s+r}{2}}
q^{r^2}}{(q;q)_{r} (q^3;q)_{r}}
\frac{(q^{-n};q)_{s+r}}{(q;q)_{s-r}}.
\]
By using the identities
\[
 (q^{-n};q)_{s+r} = (q^{s-n};q)_r (q^{-n};q)_s  \text{ and }
 (q;q)_{s-r} = \frac{(q;q)_s}{(q^{-s};q)_r} (-1)^r q^{\binom{r}{2} -rs},
\]
we have
\begin{align*}
 S(r,s) &=
 (-1)^{s+r}
\frac{
q^{2n s+2s-\binom{s+r}{2}}
q^{r^2}}{(q;q)_{r} (q^3;q)_{r}}
(q^{s-n};q)_r (q^{-n};q)_s
\frac{ (q^{-s};q)_r (-1)^r q^{-\binom{r}{2} + rs} }{  (q;q)_s } \\
&=
\frac{
 (-q^{2n+2})^{s}
q^{-\binom{s}{2}}
 (q^{-n};q)_s (q^{s-n};q)_r  (q^{-s};q)_r
}{(q;q)_{r} (q^3;q)_{r} (q;q)_s } \cdot q^{r}.
\end{align*}
Thus, \eqref{eq:mainIdentity} is equivalent to
\[
 \sum_{r,s}
 \frac{
 (-q^{2n+2})^{s}
q^{-\binom{s}{2}}
 (q^{-n};q)_s (q^{s-n};q)_r  (q^{-s};q)_r
}{(q;q)_{r} (q^3;q)_{r} (q;q)_s } \cdot q^{r}
=
\frac{ (q^5;q)_{2n} }{(q^5;q)_{n}(q^3;q)_{n}}.
\]
But this follows from substituting
$a = q^2 q^n$ and $c=q^3$ in the following claim
and then expanding the $q$-hypergeometric series,
together with using the fact that
$(q^5;q)_{2n}  = (q^5;q)_{n}(q^n q^5 ;q)_{n}$.

\textbf{Claim:} For non-negative integers $n$,
we have the identity
\begin{align}\label{eq:qChuVanHelp}
\sum_{k \geq 0}
 \frac{ (-a q^{n})^{k}
q^{-\binom{k}{2}}
(q^{-n};q)_{k} }{  (q;q)_{k}  }
\pFq{2}{1}{\frac{cq^k}{aq},q^{-k}}{c}{q;q}
&=
\frac{ (ac ;q)_{n}  }{ (c;q)_{n} }.
\end{align}
Applying the first $q$-Chu-Vandermonde identity, the left-hand side becomes
\[
 \sum_{k \geq 0}
 \frac{ (-a q^{n})^{k}
q^{-\binom{k}{2}}
(q^{-n};q)_{k} }{  (q;q)_{k}  }
\frac{(aq/q^k;q)_k}{(c;q)_k}\left( \frac{cq^k}{aq} \right)^k.
\]
Now, using the identity
$
(aq/q^k;q)_k = q^{-\binom{k}{2}} (-a)^k (1/a;q)_k
$
we see that the left-hand side of \eqref{eq:qChuVanHelp} is equal to
\[
\sum_{k \geq 0}
\frac{ (-a q^{n})^{k}
q^{-\binom{k}{2}} (q^{-n};q)_{k} }{  (q;q)_{k} }
\frac{q^{-\binom{k}{2}} (-a)^k (1/a;q)_k}{(c;q)_k}\left( \frac{cq^k}{aq} \right)^k.
\]
Simplification gives
\[
\sum_{k \geq 0}
(a c q^{n})^{k}
\frac{
(q^{-n};q)_{k} (1/a;q)_k }{ (c;q)_k (q;q)_{k} }
=
\pFq{2}{1}{1/a,q^{-n}}{c}{q; a c q^{n} }.
\]
This is now a special case of the second $q$-Chu-Vandermonde
identity and we are done.
\end{proof}

A curious observation is that \cref{thm:earTriRefinement} refines
the $q$-Catalan numbers in the same spirit as the following $q$-analog of Touchard's
identity~\cite[Thm.~1]{Andrews2010}, which states that
\[
 \Cat[q]{n+1} = \sum_{r \geq 0}q^{2r^2+2r} \qbinom{n}{2r} \Cat[q]{r}\frac{(-q^{r+2};q)_{n-r}}{(-q;q)_{r}}.
\]

We let $\rot_n$ act on $\TRI(n)$ by rotating a triangulation one step clockwise.
As $\rot_n$ also preserves the set $\TRI_{\ear}(n,k)$,
we have a group action of $\langle \rot_n \rangle$ on $\TRI_{\ear}(n,k)$.
\begin{theorem}\label{thm:refinedTriCSP}
	For all integers $2 \leq k \leq \frac{n}{2}$, the triple
	\[
	\left( \TRI_{\ear}(n,k), \langle \rot_n \rangle, \Tri[q]{n}{k}\right)
	\]
	exhibits the cyclic sieving phenomenon.
\end{theorem}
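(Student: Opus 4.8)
The plan is to use the combinatorial (``brute-force'') method: evaluate $\Tri[q]{n}{k}$ at roots of unity and match the result with fixed-point counts. By \cref{lem:cspSimple2} it suffices to treat $\rot_n^{d}$ for divisors $d\mid n$; put $r\coloneqq n/d=\order(\xi^{d})$, where $\xi$ is a primitive $n^{\thsup}$ root of unity. The case $r=1$ is immediate, since then $\Tri[\xi^{d}]{n}{k}=\Tri{n}{k}=|\TRI_{\ear}(n,k)|$ by \eqref{eq:HurtadoNoy}. So fix $d\mid n$ with $r\ge 2$.

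First I would dispose of $r\ge 4$ by showing both sides vanish. Geometrically: if $\rho\coloneqq\rot_n^{d}$ (a rotation of order $r$ about the centre $O$ of the $n$-gon) fixes a triangulation $T$, then $O$ lies in some closed cell $\Delta$ of $T$; since the cells of $T$ have pairwise disjoint interiors and no edge of $T$ meets the interior of a cell, necessarily $\rho(\Delta)=\Delta$, so $\rho$ restricts to a rotational symmetry of the triangle $\Delta$ and has order dividing $3$ --- unless $O$ lies on the (unique) edge $e$ of $T$ through it, in which case $\rho(e)=e$ forces the order of $\rho$ to divide $2$. Either way $r\le 3$, a contradiction, so no such $T$ exists. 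On the polynomial side, \cref{lem:qFraction} gives that $[n]_q/[k]_q$ evaluates at $\xi^{d}$ to $n/k$ when $r\mid k$ and to $0$ otherwise; in the former case $k\ge 4$, and the remaining (genuinely polynomial) factor of $\Tri[q]{n}{k}$ contains $\Cat[q]{k-2}$, which vanishes at $\xi^{d}$: writing $\Cat[q]{k-2}=\qbinom{2k-4}{k-2}-q\qbinom{2k-4}{k-3}$ and applying the $q$-Lucas theorem (\cref{thm:q-Lucas}) with modulus $r$, both $q$-binomials vanish because $2k-4\equiv-4$, $k-2\equiv-2$ and $k-3\equiv-3\pmod r$, and for $r\ge 4$ the residue of each lower index exceeds that of the corresponding upper index. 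Hence $\Tri[\xi^{d}]{n}{k}=0$.

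It remains to treat $r\in\{2,3\}$ (so $r\mid n$, $d=n/r$). Each $\rot_n^{d}$-fixed triangulation is reconstructed from a unique \emph{central cell} --- a diameter $\{a,a+n/2\}$ when $r=2$ ($n/2$ choices), a central triangle $\{a,a+n/3,a+2n/3\}$ when $r=3$ ($n/3$ choices) --- together with a triangulation $\tau$ of one of the $r$ congruent sectors, each an $(n/r+1)$-gon having the chord to the central cell as a distinguished side. A short case check shows that no ear of $T$ is incident to a vertex of the central cell, and that the ears of $T$ are exactly the $\rho$-orbits of the ears of $\tau$ at the vertices \emph{other} than the two endpoints of its distinguished side. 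Consequently the number of fixed triangulations with $k$ ears is $0$ unless $r\mid k$, and equals $\tfrac{n}{r}\,E\bigl(\tfrac{n}{r}+1,\tfrac{k}{r}\bigr)$ for $k=rj$, where $E(m,j)$ counts triangulations of an $m$-gon with a distinguished side having exactly $j$ ears away from that side; a rooted dual-tree decomposition gives $E(m,j)$ a Hurtado--Noy--type closed form in binomial coefficients and a Catalan number. Separately, one evaluates $\Tri[\xi^{n/2}]{n}{k}=\Tri[-1]{n}{k}$ and $\Tri[\xi^{n/3}]{n}{k}$ by applying \cref{thm:q-Lucas} and \cref{lem:qFraction} to each factor --- including the inner sum $\sum_{j}q^{j(n-2)}\qbinom{n-2k}{j}$, where $q^{n-2}$ specialises to $\xi^{-2d}$ --- and checks that the two explicit expressions agree.

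The conceptual content, and the main obstacle, lies in the $r\in\{2,3\}$ cases. The ear bookkeeping near the central cell has to be handled carefully: as the case $n=6$ already illustrates, the two ``endpoint'' corners of each sector genuinely never carry an ear of $T$, which is precisely what forces $r\mid k$, and pinning down the normalisation $\tfrac{n}{r}E(\cdot,\cdot)$ requires the closed form for $E$. Moreover, unlike in the usual Catalan/Narayana CSP proofs, $\Tri[q]{n}{k}$ is neither palindromic nor a product of $q$-binomial coefficients, so its root-of-unity evaluation --- in particular controlling the inner sum via $q$-Lucas --- is noticeably messier, and that is where I would expect to spend most of the effort. One could instead combine \cref{thm:earTriRefinement} with the known CSP $(\TRI(n),\langle\rot_n\rangle,\Cat[q]{n-2})$ of \cite[Thm.~7.1]{ReinerStantonWhite2004} to control the \emph{total} number of fixed points, but the ear-refined fixed-point enumeration above is still needed to split this correctly among the sets $\TRI_{\ear}(n,k)$.
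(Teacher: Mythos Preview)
Your approach is essentially the paper's: reduce to divisors $d\mid n$, dispose of $\order(\xi^d)\ge 4$ by showing both sides vanish, and for $\order(\xi^d)\in\{2,3\}$ decompose fixed triangulations via the central diameter or central triangle into congruent sector triangulations, then match the resulting count against the explicit root-of-unity evaluation (the paper likewise leaves this last verification to a computer-algebra check). Your function $E(m,j)$ is exactly what the paper obtains as the two-term combination $\frac{m-2j}{m}\Tri{m}{j}+\frac{2(j+1)}{m}\Tri{m}{j+1}$, so there is no substantive difference.

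One correction: the claim ``no ear of $T$ is incident to a vertex of the central cell'' is false as stated. For instance, with $n=8$, $r=2$ and diameter $\{1,5\}$, the triangle $\{1,2,3\}$ can perfectly well occur and is an ear of $T$ containing the endpoint $1$. What is true, and what you actually need, is that no ear of $T$ has its \emph{apex} (middle vertex) at a central-cell vertex: an ear with apex $a$ would require the chord $\{a-1,a+1\}$, which crosses a central edge. With that fix your second clause --- that the ears of $T$ are exactly the $\rho$-orbits of the ears of $\tau$ whose apex is not an endpoint of the distinguished side --- goes through, and the divisibility $r\mid k$ and the count $\tfrac{n}{r}E(\tfrac{n}{r}+1,\tfrac{k}{r})$ follow.
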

\begin{proof}
Let $\xi$ be a primitive $n^\thsup$ root of unity and let $d$ be an integer.
Repeatedly using \cref{thm:q-Lucas} and \cref{lem:qFraction} yields
\[
\Tri[\xi^d]{n}{k} = \begin{cases}
\frac{n}{k}2^{n-2k}\binom{n-4}{2k-4}\Cat{k-2} & \text{if }d=n\\[1em]
\frac{n}{k}2^{n/2-k}\binom{n/2-2}{k-2}\binom{k-2}{k/2-1} & \text{if } \order(\xi^d)=2 \text{ and } 2 \mid k\\[1em]
\frac{n}{k}2^{n/3-2k/3}\binom{n/3-2}{2k/3-2}\binom{2k/3-2}{k/3-1} & \text{if } \order(\xi^d)=3 \text{ and } 3 \mid k\\[1em]
0 & \text{otherwise.}
\end{cases}
\]
We prove that these evaluations agree with the number of
fixed points in $\TRI_{\ear}(n,k)$ under $\rot_n^d$ on a case-by-case basis.

\noindent
\textbf{Case $d=n$}: Trivial.

\noindent
\textbf{Case $\order(\xi^d)=2$ and $2 \mid k$}:
Such a triangulation must have a diagonal (an edge from some $i$ to $i+d$)
that divides the triangulation into two halves.
The diagonal can be chosen in $n/2$ ways.
The triangulation is now determined uniquely by one of its halves.
To choose one half, we choose a triangulation of a $(n/2+1)$-gon with $k/2$ ears
whose sides do not coincide with the diagonal. Such a triangulation is either
(i) an element of $\TRI_{\ear}(n/2+1,k/2)$ which does not have an ear with an edge coinciding with
the diagonal or
(ii) an element of $\TRI_{\ear}(n/2+1,k/2+1)$ which has an ear coinciding with the diagonal.
Based on the rotational symmetry, we note that $2k/n$ of the elements in $\TRI_{\ear}(n,k)$
have an ear that has an edge adjacent to a given side.
Thus, the number of triangulations fixed by $\rot_n^{n/2}$ is equal to
\[
\frac{n}{2}\left( \frac{n/2+1-k}{n/2+1}\Tri{n/2+1}{k/2} + \frac{2(k/2+1)}{n/2+1}\Tri{n/2+1}{k/2+1}\right).
\]
Sage \cite{Sage} confirms that this symbolic expression agrees with the one given by
evaluating $\Tri[\xi^d]{n}{k}$.

\noindent
\textbf{Case $\order(\xi^d)=3$ and $3 \mid k$}:
Similar to the above case. Such a triangulation must have a
central triangle (a triangle with vertices $i$, $i+n/3$ and $i+2n/3$ $\pmod{n}$) that
divides the triangulation into three parts. The central triangle can be chosen in $n/3$ ways.
The triangulation is now determined uniquely by one of its three parts.
Choosing one part is done with a similar argument as above,
so the number of triangulations fixed by $\rot_n^{n/3}$ is equal to
\[
\frac{n}{3}\left(
\frac{n/3+1-2k/3}{n/3+1}\Tri{n/3+1}{k/3} + \frac{2(k/3+1)}{n/3+1}\Tri{n/3+1}{k/3+1}
\right).
\]
Sage confirms that this symbolic expression agrees with the one
given by evaluating $\Tri[\xi^d]{n}{k}$.

\noindent
\textbf{The remaining cases}:
If $\order(\xi^d) =2$ (or $3$), it is clear that any triangulation
fixed by $\rot_n^d$ must have a diagonal (or, respectively, a central triangle).
Thus each of the two halves (or, respectively, three parts)
must have the same number of ears and hence $2 \mid k$ (or $3 \mid k$).
If $\order(\xi^d) > 3$, then it is clear that there are no
fixed triangulations under the action of $\rot_n^d$.
This exhausts all possibilities and thus the proof is done.
\end{proof}

\begin{problem}
One might ask if there are further refinements.
However, note that $\frac{n}{k}\binom{n-2k}{j}\binom{n-4}{2k-4}\Cat{k-2}$, $0 \leq j \leq n-2k$, do not refine $\frac{n}{k}2^{n-2k}\binom{n-4}{2k-4}\Cat{k-2}$
since the former is not always an integer, for example, at $n = 5, k = 2, j = 1$.
\end{problem}

\begin{remark}
Unfortunately, there is no Narayana refinement of rotation acting on triangulations.
To see this, observe that $\Nar[q]{2}{2}=q^2$
but at a $4^\thsup$ root of unity $\xi$, we have $\Nar[\xi]{2}{2}=-1$.
\end{remark}

\subsection{Triangulations of type \texorpdfstring{$B$}{B}}

Let us define \defin{type $B$ triangulations} $\mdefin{\TRI^B(n)}$,
as the set of elements in $\TRI(2n)$ which are invariant under rotation by a half-turn.
In such a triangulation, there is always an edge through the center.
There are $n$ choices of such an edge, and then we need to choose a triangulation
on one half of the $2n$-gon. This gives $n\cdot \frac{1}{n}\binom{2(n-1)}{n-1} = \binom{2(n-1)}{n-1}$
such type $B$ triangulations. The following result is straightforward to prove but also follows
from \cite[Thm.~4.1]{EuFu2008}.

\begin{proposition}[{See \cite[Thm.~4.1]{EuFu2008}.}]\label{prop:typeBTriangulations}
The triple
\[
  \left( \TRI^B(n), \langle\rot_{n}\rangle, \qbinom{2n-2}{n-1} \right)
\]
exhibits the cyclic sieving phenomenon.
\end{proposition}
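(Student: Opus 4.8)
The plan is to verify the cyclic sieving identity \eqref{eq:cspDef} directly, matching polynomial evaluations with fixed-point counts. Fix a primitive $(n+1)^{\thsup}$ root of unity $\xi$ and an integer $d$, and set $e\coloneqq\order(\xi^{d})$, a divisor of $n+1$. First I would evaluate the polynomial via the $q$-Lucas theorem (\cref{thm:q-Lucas}): if $e=1$ then $\qbinom{2n}{n}[\xi^{d}]=\binom{2n}{n}$, while if $e\ge 2$, writing $2n=a_{1}e+a_{0}$ and $n=b_{1}e+b_{0}$ with $0\le a_{0},b_{0}<e$, the relations $2n\equiv-2$ and $n\equiv-1\pmod e$ (valid because $e\mid n+1$) force $a_{0}=e-2<e-1=b_{0}$, so $\qbinom{a_{0}}{b_{0}}=0$ and hence $\qbinom{2n}{n}[\xi^{d}]=\binom{a_{1}}{b_{1}}\qbinom{a_{0}}{b_{0}}[\xi^{d}]=0$. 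Thus the polynomial equals $\binom{2n}{n}$ when $(n+1)\mid d$ and $0$ otherwise.

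Next I would count the triangulations in $\TRI^{B}(n)$ (triangulations of the $(2n+2)$-gon invariant under the half-turn) fixed by $\rot_{n+1}^{d}$. By \cref{lem:cspSimple2} this count depends only on $g\coloneqq\gcd(d,n+1)$, i.e.\ on rotation by $g$ vertex-steps. If $(n+1)\mid d$, then $g=n+1$ and the action is trivial, so all $|\TRI^{B}(n)|=\binom{2n}{n}$ triangulations are fixed, using the enumeration given just before the proposition; this matches the polynomial. If $(n+1)\nmid d$, so $0<g<n+1$, I would invoke the geometric observation that every $T\in\TRI^{B}(n)$ contains a \emph{unique} diameter edge: half-turn symmetry forces the centre of the polygon to lie in the relative interior of an edge of $T$ rather than of a triangle (a convex triangle has no rotational symmetry of order $2$), and such an edge must be a long diagonal joining antipodal vertices. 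Since rotation by $g$ steps with $0<g<n+1$ carries this unique diameter to a \emph{different} long diagonal, and two long diagonals always cross at the centre, no $T$ can be fixed by $\rot_{n+1}^{g}$; hence the count is $0$, again matching the polynomial.

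Assembling the two sides gives \eqref{eq:cspDef} for every $d$, which is the assertion. The step I expect to be the only real obstacle is ruling out nontrivial fixed points in the case $(n+1)\nmid d$; but the argument above is essentially the one already used for the cases $\order(\xi^{d})>3$ in the proof of \cref{thm:refinedTriCSP}, so it can be reused with minimal adaptation, and everything else is a short $q$-Lucas computation.
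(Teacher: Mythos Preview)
Your proof is correct and follows essentially the same approach as the paper: both use the unique central diameter of a centrally symmetric triangulation to show there are no nontrivial fixed points, and both appeal to $q$-Lucas to show the polynomial vanishes at nontrivial $(n+1)^{\thsup}$ roots of unity. Your version is somewhat more explicit---you spell out the residues $a_0=e-2<e-1=b_0$ in the $q$-Lucas step and you justify why the centre must lie on an edge rather than in a triangle---whereas the paper simply asserts these facts.
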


The polynomial $[n]_q \Tri[q]{n+1}{k}$ does not seem to give a refinement of
\cref{prop:typeBTriangulations}.

%\begin{proof}
%We have that every element in $\TRI^B(n)$ has a unique edge through
%its center. Hence, for a triangulation to be fixed by $\rot^d_{n+1}$,
%we need this edge to be mapped to itself.
%This is only possible if $d$ is a multiple of $n+1$.
%
%It remains to verify that
%$\qbinom[\xi]{2n}{n} = 0$
%whenever $\xi$ is a primitive $d^\thsup$ root of
%unity for all $1 \leq d \leq n$ where $d \mid (n+1)$.
%But this follows almost immediately from \cref{thm:q-Lucas} ($q$-Lucas).
%
% We first note that
% \[
%  \qbinom{2n}{n} = \frac{[n+1]_q}{[2n+1]_q} \qbinom{2n+1}{n}
%  = \frac{1-q^{n+1}}{1-q^{2n+1}} \qbinom{2n+1}{n}.
% \]
% We have that $1-\xi^{2n+1} = 1-\xi^{-1} \neq 0$, so the denominator is non-zero
% at $q=\xi$, while the factor $1-q^{n+1}$ vanish at $q=\xi$.
% It follows that the expression above is $0$ for $q = \xi$.
%\end{proof}

\section{Marked non-crossing matchings}\label{sec:holeyCatalan}

A \defin{marked non-crossing matching} is a non-crossing matching
where some of the regions %(determined by the edges)
have been marked.
Let $\mdefin{\NCM^{(r)}(n)}$ denote the set of
marked non-crossing matchings with exactly $r$ marked regions.
Since every non-crossing matching in $\NCM(n)$ has $n+1$
regions, it follows that  $|\NCM^{(r)}(n)|=\binom{n+1}{r}|\NCM(n)|$.

In particular, for $r=1$ we can think of our objects
as non-crossing matchings of vertices on the outer boundary of an annulus rather than on a disk.
\[
 \includegraphics[page=1,scale=0.75,valign=c]{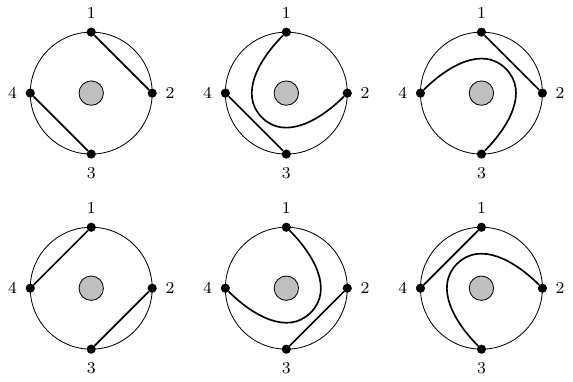} \\
\]
This model is reminiscent of the \emph{non-crossing permutations} considered in
\cite{Kim2013}, where points on the boundary of an annulus are matched in a non-crossing fashion,
but with some other technicalities imposed.

The following generalizes \cref{prop:NCMNarayanaCSP}.
\begin{theorem}
Let $1 \leq k \leq n$ and $0 \leq r \leq n+1$.
Then
\[
 \left(
 \{ M \in \NCM^{(r)}(n) : \even(M) = k \} ,
 \langle \rot_{n} \rangle,
 \Nar[q]{n}{k+1} \qbinom{n+1}{r}
 \right)
\]
is a CSP triple.
\end{theorem}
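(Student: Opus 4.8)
The plan is to combine the known CSP for non-crossing matchings refined by even edges (\cref{prop:NCMNarayanaCSP}) with a standard CSP on the $\binom{n+1}{r}$ ways of choosing marked regions. First I would use the combinatorial description: an element of $\{M\in\NCM^{(r)}(n):\even(M)=k\}$ is a pair $(M_0,S)$ where $M_0\in\NCM(n,k)$ and $S$ is an $r$-subset of the $n+1$ regions of $M_0$, and rotation $\rot_n$ acts diagonally, rotating $M_0$ and carrying the marked regions along. Since $|\NCM(n,k)|=\Nar{n}{k+1}$ and there are $\binom{n+1}{r}$ choices of $S$ for each $M_0$, the product $\Nar[q]{n}{k+1}\qbinom{n+1}{r}$ has the right value at $q=1$; it remains to match fixed-point counts with evaluations at roots of unity. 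By \cref{lem:cspSimple} and \cref{lem:cspSimple2} it suffices to check \eqref{eq:cspDef} for all $d\mid n$.

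Fix $d\mid n$ and a primitive $n^\thsup$ root of unity $\xi$. The key structural observation is that if $M_0\in\NCM(n,k)$ is fixed by $\rot_n^d$, then $\rot_n^d$ permutes the $n+1$ regions of $M_0$, and I would analyze this permutation. The outer region (the unbounded one, or the one "at infinity" on the circle) is always fixed; the remaining $n$ regions are permuted in orbits. Here I expect the clean statement to be that the action of $\rot_n^d$ on the $n$ bounded regions of a $\rot_n^d$-symmetric matching is free, so they split into $d$ orbits of size $n/d$ — this mirrors the fact that a $\rot_n^d$-symmetric matching is determined by a "fundamental domain" of $n/d$ vertices, à la \cref{bij:BwToNCM}. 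Granting this, a marked configuration $(M_0,S)$ is fixed by $\rot_n^d$ iff $M_0$ is fixed and $S$ is a union of $\rot_n^d$-orbits of regions, i.e. $S$ contains the fixed outer region or not, and is a union of some of the $d$ free orbits of size $n/d$. The number of such $S$ with $|S|=r$ is then $\binom{d}{\lfloor rd/n\rfloor}$ if $n/d\mid r$ or $n/d\mid r-1$ (according to whether the outer region is used), and $0$ otherwise — exactly the value of $\qbinom{n+1}{r}$ at $\xi^d$ predicted by \cref{thm:q-Lucas} applied to $\qbinom{n+1}{r}=\qbinom{n/d\cdot d+1}{r}$. Multiplying the number of $\rot_n^d$-fixed $M_0$ (which equals $\Nar[\xi^d]{n}{k+1}$ by \cref{prop:NCMNarayanaCSP}) by the number of admissible $S$ gives precisely $\Nar[\xi^d]{n}{k+1}\qbinom{n+1}{r}[\xi^d]$, which is \eqref{eq:cspDef}.

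The one subtle point, and the main obstacle, is verifying that $\rot_n^d$ acts \emph{freely} on the bounded regions of a $\rot_n^d$-symmetric non-crossing matching, i.e. no bounded region is individually fixed (which would create a region with $\rot_n^d$-symmetry and disrupt the orbit count). A region fixed by $\rot_n^d$ of order $n/d$ would be a $(n/d)$-fold symmetric polygonal region inscribed in the circle whose boundary consists of arcs and chords of the matching; for $n/d>1$ such a region must enclose the center and be bounded by $n/d$ chords forming a "central $(n/d)$-gon", but in a \emph{matching} the chords are disjoint edges, so they cannot share endpoints to form such a polygon — the only way is $n/d=2$, a single diameter, and a diameter bounds two regions that are swapped, not fixed. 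So freeness holds, with the understanding that when $n/d=2$ the diameter's two regions form one orbit of size $2$, consistent with the count. I would write this out carefully, then note the $q=1$ identity $\sum_k\Nar{n}{k+1}\binom{n+1}{r}=\binom{n+1}{r}\Cat{n}=|\NCM^{(r)}(n)|$ is automatic, and conclude.
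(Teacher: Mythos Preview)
Your overall strategy is the same as the paper's: factor the fixed-point count for $\rot_n^d$ as (number of fixed matchings) times (number of $\rot_n^d$-stable $r$-subsets of regions), invoke \cref{prop:NCMNarayanaCSP} for the first factor, and match the second against the $q$-Lucas evaluation of $\qbinom{n+1}{r}$. The paper phrases the region orbit structure combinatorially (``the $d$ edges going from $[2d]$ to higher vertices each have a unique region to their left''), reaching the same decomposition into one fixed region plus $d$ orbits of size $n/d$.

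There is, however, a geometric error in your description. All $n+1$ regions lie inside the disk; there is no ``outer'' or unbounded region. The unique $\rot_n^d$-fixed region (for $d\mid n$, $d<n$) is the \emph{central} one, the region containing the center of the circle --- exactly what the paper calls the ``central region''. Your claim that a fixed region ``must be bounded by $n/d$ chords forming a central $(n/d)$-gon'' is wrong because the boundary of a region consists of chords \emph{and} circle arcs, so the bounding chords need not share endpoints; e.g.\ the matching $\{1,2\},\{3,4\},\{5,6\}$ on six vertices has a central region bounded by three chords and three arcs, fixed by $\rot_3$. A correct argument is that any $\rot_n^d$-fixed region, being a topological disk mapped to itself by a nontrivial rotation, must (by Brouwer) contain the rotation's unique fixed point, the center; and since a $\rot_n^d$-symmetric matching has no diametral edge (a diameter requires $n$ odd, but would have to be fixed, forcing $d=n/2$), the center lies in the interior of exactly one region. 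Once you replace ``outer region'' by ``central region'', the orbit count and the rest of your argument go through unchanged.
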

\begin{proof}
 Consider elements
of $\NCM(n)$ with $k$ even edges, fixed by $\rot_n^d$, where we may without
loss of generality assume $d \mid n$. As noted in \cref{sec:evenNCM}, $\even(M)$ is invariant under $\rot_n$.
Write $n = md$. By the $q$-Lucas theorem (\cref{thm:q-Lucas}),
\[
\qbinom{n+1}{r} = \begin{cases}
\binom{d}{r/m} & \text{ if $m \mid r$,} \\
\binom{d}{(r-1)/m} & \text{ if $m \mid r - 1$,} \\
0 & \text{ otherwise}
\end{cases}
\]
at a primitive $m^\thsup$ root of unity.

Divide the $2n$ vertices of a
non-crossing matching with $d$-fold rotational symmetry (remember that $\rot_n$ rotates the vertices two steps)  into $m$ segments of
length $2d$, say $[2d], [4d] \setminus [2d],$ and so on. The $d$ edges going from $[2d]$ to higher vertices (in $[4d]$ including $[2d]$) each have a unique region to their left.
This means that the number of marked regions $r$ must be $jm$, or $jm+1$ if there is a central region which is marked, see \cref{F:NCMNarayanaCSPex}.
We can thus get a fixed point of $\rot^d_n$ by choosing to mark $j = r/m$ (or, in the latter case, $(r-1)/m$) of the
$d$ regions associated to the edges from $[2d]$ to vertices with larger labels, which gives $\binom{d}{r/m} $ and $\binom{d}{(r-1)/m} $ respectively.
Now, we combine this with \cref{prop:NCMNarayanaCSP} and the theorem follows.

\end{proof}

\begin{figure}
\includegraphics[scale=1.2]{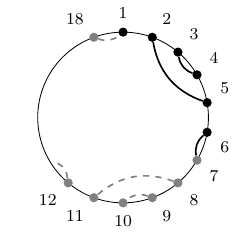}
\caption{Partitioning a non-crossing perfect matching of size $2n = 18$ with $d$-fold rotational symmetry, $d = 3$, into segments of length $2d$. Each of the three edges from $[2d]$ to vertices with bigger labels has a unique region to its left.}
\label{F:NCMNarayanaCSPex}
\end{figure}

\appendix

\section{Catalan objects}\label{sec:catalanObjectsAppendix}

\subsection{Type A objects}

Below is an overview of the Catalan objects we consider for $n=3$. Recall that $\Cat{3}=5$.

\begin{align*}
\SYT(n^2) &&&
\includegraphics[page=1,scale=0.5,valign=c]{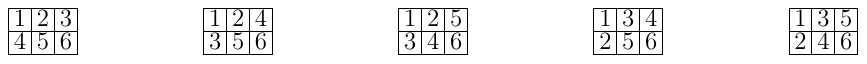} \\
\DYCK(n) &&&
\includegraphics[page=2,scale=0.5,valign=c]{appendixAFigures.pdf} \\
\NCM(n) &&&
\includegraphics[page=3,scale=0.5,valign=c]{appendixAFigures.pdf} \\
\NCC(n) &&&
\includegraphics[page=4,scale=0.5,valign=c]{appendixAFigures.pdf} \\
\NCP(n) &&&
\includegraphics[page=5,scale=0.5,valign=c]{appendixAFigures.pdf} \\
\SSYT(2^\ast,n-1) &&& \quad\;\;
\includegraphics[page=6,scale=0.5,valign=c]{appendixAFigures.pdf} \\
\TRI(n) &&&
\includegraphics[page=7,scale=0.5,valign=c]{appendixAFigures.pdf} \\
\end{align*}

\subsection{Type B objects}

Below is an overview of the considered type $B$ Catalan objects for $n=2$.
Recall that $\CatB{2}=6$.

\begin{align*}
\SYT(2n/n) &&&
\includegraphics[page=1,scale=0.5,valign=c]{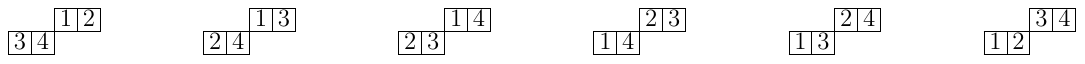} \\
\PATHS(n) &&&
\includegraphics[page=2,scale=0.5,valign=c]{appendixBFigures.pdf} \\
\NCM^B(n) &&&
\includegraphics[page=3,scale=0.5,valign=c]{appendixBFigures.pdf} \\
\NCC^B(n) &&&
\includegraphics[page=4,scale=0.5,valign=c]{appendixBFigures.pdf} \\
\NCP^B(n) &&&
\includegraphics[page=5,scale=0.5,valign=c]{appendixBFigures.pdf} \\
\TRI^B(n) &&&
\includegraphics[page=6,scale=0.5,valign=c]{appendixBFigures.pdf} \\
\end{align*}

\section{Bijections}\label{sec:bijections}

Here we recall several bijections on Catalan objects which have appeared earlier in the literature.
We have tried to find the earliest reference for each.

\subsection{NCM and binary words}\label{sec:NCMandBW}
Suppose $xy$ is an edge in a non-crossing perfect matching, with $x<y$.
We say that $x$ is the \defin{starting vertex} and $y$ is the \defin{end vertex}.
Further, denote $\NCM_{\sh}(n,k)$
the subset of all $N \in \NCM(n)$ such that $\sml(N)=k$.

We now describe a well-known bijection $\mdefin{\NCMtoDYCK}$ from $\NCM(n)$ to $\DYCK(n)$.
\begin{bijection}\label{bij:NCMtoDyck}
	Take $M\in \NCM(n)$ and construct the Dyck path
	$\NCMtoDYCK(M)=\sfb_1\sfb_2\dotsb \sfb_{2n}$ as follows.
	For vertices $i=1,2,\dotsc, 2n$ in $M$, let $\sfb_i=0$ if $i$ is a starting
	vertex and let $\sfb_i=1$ if $i$ is an end vertex.
	It is not hard to see that this procedure
	ensures that the resulting binary word is a Dyck path.
\end{bijection}

Let $d$ be a natural number such that $d\mid n$.
If a matching $M \in \NCM(n)$ has $d$-fold rotational symmetry,
it is sufficient to understand how the vertices $1,2,\dotsc, d$ are
matched up. Here, we restrict ourselves to the case when $M$ does not have a
diagonal---for the case when $M$ has a diagonal, see the
third case in the proof of \cref{thm:PMrefinedCSP}.
In this case, there is a bijection $\BWtoNCM$ between $\BW(2d,d)$ and
rotationally symmetric non-crossing perfect matchings.

\begin{bijection}\label{bij:BwToNCM}
Let $\sfc=\sfc_1 \sfc_2 \dotsb \sfc_{2d} \in \BW(2d,d)$.
We show how to construct the corresponding $\BWtoNCM(\sfc) \in \NCM(n)$.
Think of the vertices $1,2, \dotsc, 2d$ being arranged on a line.
For all vertices $i=1,2, \dotsc 2d$, we let
vertex $i$ be a ``left'' vertex if $\sfc_i=0$ and a ``right'' vertex if $\sfc_i=1$.
Then we match every left vertex with the closest available
right vertex to its right without creating a crossing of edges.
There is a unique way of doing this so that no
left vertex remains unpaired between a matched pair of vertices.
However, there might be some left vertices which are not matched because
there are not sufficiently many right vertices to their right.
There must also be equally many unpaired right vertices
because there are not enough left vertices to their left.
Since this will be the same in every interval of vertices $[2dk+1, 2d(k+1)]$
there is a unique way to pair the remaining left vertices with the remaining
right vertices of the interval to the right and vice versa.
\end{bijection}

We can prove a stronger statement about rotationally symmetric $\NCM$s.
We study $\BWtoNCM$ restricted to $\NCM_{\sh}(n,k)$. Once again, such a matching is
completely determined by how the vertices $1,2,\dotsc, 2d$ are paired up.
Among these $2d$ vertices, there are two possible cases to consider.

\textbf{Case 1:}
Exactly $dk/n$ short edges where either vertex $1$ is a left vertex or vertex $2d$ is a right vertex.

\textbf{Case 2:}
Exactly $dk/n-1$ short edges where vertex $1$ is a right vertex and vertex $2d$ is a left vertex.

If one applies $\BWtoNCM^{-1}$ (that is, left vertex corresponds
to $\mathsf{0}$ and right vertex corresponds to $\mathsf{1}$) to the
non-crossing perfect matchings in the two above cases,
one gets the image $\BW^{dk/n}(d)$.

\subsection{NCM and SYT}

We recall the definition of this bijection.
\begin{bijection}\label{bij:SYTtoNCM}
Let $T \in \SYT(n^2)$ and construct $\SYTtoNCM(T)$ as follows.
For $i\in \{1,2,\dotsc, 2n\}$, let vertex $i$ be a starting vertex
if $i$ is in the first row and an end vertex otherwise.
It is not hard to show that determining the starting
and ending vertices uniquely determines a non-crossing matching.
\end{bijection}

\subsection{NCP and NCM}

There is a bijection between non-crossing partitions and non-crossing matchings,
$\NCPtoNCM:\NCP(n) \to \NCM(n)$,
that directly restricts to a bijection between $\NCP^{B}$ and $\NCM^B$.
This bijection has another nice property;
it is equivariant with regards to the Kreweras complement on the non-crossing perfect
matchings and rotation on the non-crossing perfect matchings \cite[Thm.~1]{Heitsch2007}.

\begin{bijection}\label{bij:NCPtoNCM}
Consider $\pi \in \NCP(n)$, and for every vertex $j \in \{1,\dotsc,n\}$,
insert a new vertex $(2j-1)'$ immediately after $j$, and $(2j-2)'$ immediately before $j$,
where we insert $(2n)'$ immediately before $1$.
There we match $(2j)'$ to the closest point $(2k-1)'$, $j<k$, such that the edge
between those vertices does not cross any of the blocks in $\pi$.
If no such $k$ exists, we match to the smallest number $k, 0<k\le j$. Since $j=k$
is always possible the map is well-defined. This gives a
perfect matching $\sigma$ on $1',2',\dotsc,(2n)'$ which is non-crossing,
and in addition, does not cross any of the blocks of $\pi$. We can get the inverse
of the map $\NCPtoNCM$ by putting back the vertices $1,2,\dotsc,n$ and letting all
vertices that can have an edge between them without crossing any edge of
the perfect matching $\sigma$ belong to the same block.
\end{bijection}

The following is an example of this bijection.
\begin{equation}\label{eq:NCPtoNCMexample}
\NCPtoNCM:\quad
\matching{6}{3/3,6/6,1/2,2/4,4/5,5/1}{}
\qquad
\to
\qquad
\matching{12}{1/2,3/6,4/5,7/8,9/12,10/11}{}
\end{equation}

One can also illustrate the bijection as follows. For each singleton block,
add an edge between two copies of the vertex. For each block of size two, split the edge into
two non-crossing edges and hence each vertex into two copies. Finally, for each block with
$m \geq 3$ elements, push apart the edges at the vertices so that we have $m$ non-crossing edges
on $2m$ vertices. See \cref{fig:ncptoncm}.

\begin{figure}
\includegraphics[scale=1.5]{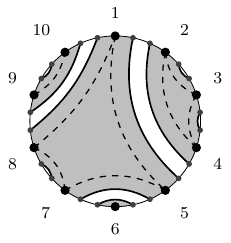}
\caption{A non-crossing partition given by the dashed edges and the corresponding non-crossing matching given by the solid edges.}
\label{fig:ncptoncm}
\end{figure}

Note that, by definition, every block except the one whose minimum element is 1
corresponds to an even edge under the bijection. If $j > 1$ is the smallest
element of a block and $k$ is the largest, then $(2j-2)'$ is matched to $(2k-1)'$.
The remaining even vertices are matched to smaller odd vertices. Hence \cref{bij:NCPtoNCM}
in fact gives a bijection between the sets
$\NCP(n,\ell)$ and $\NCM(n,\ell-1)$.

\subsection{NCP and Dyck paths}

We briefly describe a bijection between non-crossing partitions
and Dyck paths, $\mdefin{\NCPtoDYCK}$,
with the property that the number of parts is sent to the
number of peaks. This bijection is often attributed to P.~Biane~\cite{Biane1997}.

\begin{bijection}\label{bij:NCPtoDyck}
Let $\pi = B_1|B_2|\dotsb|B_k$ be a non-crossing partition of size $n$,
where the blocks are ordered increasingly according to maximal member.
By convention, we let $B_0$ be a block of size $0$,
where the maximal member is also $0$. We construct a Dyck path $D \in \DYCK(n)$
from $\pi$ as follows.
For each $j=1,\dotsc,k$, we have a sequence of
$\max(B_j)-\max(B_{j-1})$ north steps, immediately followed by $|B_j|$ east steps.
\end{bijection}

\bibliographystyle{alphaurl}
\bibliography{bibliography}

\end{document}